\DeclareMathOperator{\Bor}{Bor}
\title[Fourier dimension and spectral gaps for hyperbolic surfaces]%
{Fourier dimension and spectral gaps\\
for hyperbolic surfaces}
\author{Jean Bourgain}
\email{bourgain@math.ias.edu}
\address{Institute for Advanced Study, Princeton, NJ 08540}
\author{Semyon Dyatlov}
\email{dyatlov@math.mit.edu}
\address{Department of Mathematics, Massachusetts Institute of Technology,
77 Massachusetts Ave, Cambridge, MA 02139}
\begin{document}

\begin{abstract}
We obtain an essential spectral gap for a convex co-compact hyperbolic surface
$M=\Gamma\backslash\mathbb H^2$ which
depends only on the dimension $\delta$ of the limit set.
More precisely, we show that when
$\delta>0$ there exists $\varepsilon_0=\varepsilon_0(\delta)>0$
such that the Selberg zeta function has only finitely many zeroes $s$
with $\Re s>\delta-\varepsilon_0$.

The proof uses the fractal uncertainty principle approach
developed in Dyatlov--Zahl~\cite{hgap}. The key new component
is a Fourier decay bound for the Patterson--Sullivan measure,
which may be of independent interest. This bound
uses the fact that transformations in the group $\Gamma$ are nonlinear,
together with estimates on exponential sums due to Bourgain~\cite{SumProduct}
which follow from the discretized sum-product theorem in $\mathbb R$.
\end{abstract}

\maketitle

\addtocounter{section}{1}
\addcontentsline{toc}{section}{1. Introduction}

Let $M=\Gamma\backslash\mathbb H^2$ be a (noncompact) convex co-compact hyperbolic
surface. The Selberg zeta function $Z_M(s)$
is a product over the set $\mathcal L_M$ of all primitive closed
geodesics
$$
Z_M(s)=\prod_{\ell\in\mathcal L_M} \prod_{k=0}^\infty \big(1-e^{-(s+k)\ell}\big),\quad
\Re s\gg 1,
$$
and extends meromorphically to $s\in\mathbb C$. Patterson~\cite{Patterson3} and Sullivan~\cite{Sullivan} proved that $Z_M$ has a simple zero at the exponent of convergence
of Poincar\'e series, denoted~$\delta$, and no other zeroes in $\{\Re s\geq \delta\}$.
Naud~\cite{NaudGap}, using the method originating in the work of Dolgopyat~\cite{Dolgopyat}, showed that for $\delta>0$, $Z_M$ has only finitely many zeroes
in $\{\Re s\geq \delta-\varepsilon\}$ for some $\varepsilon>0$ depending on the
surface. (See also Petkov--Stoyanov~\cite{PetkovStoyanov},
Stoyanov~\cite{Stoyanov1}, and Oh--Winter~\cite{OhWinter}.)

Our result removes the dependence of the improvement $\varepsilon$ on the surface:
\begin{theo}
  \label{t:gap}
Let $M$ be a convex co-compact hyperbolic surface with $\delta>0$.
Then there exists $\varepsilon_0>0$ depending only on $\delta$
such that $Z_M(s)$ has only finitely many zeroes in $\{\Re s>\delta-\varepsilon_0\}$.
\end{theo}
\Remarks
1. The proof of Theorem~\ref{t:gap} uses the results of Dyatlov--Zahl~\cite{hgap}
and thus gives a resonance free strip with a polynomial
resolvent bound, see~\cite[(1.3)]{hgap}. In the terminology used in~\cite{hgap},
Theorem~\ref{t:gap} gives an \emph{essential spectral gap}
of size ${1\over 2}-\delta+\varepsilon_0$, improving over the Patterson--Sullivan
gap ${1\over 2}-\delta$.

\noindent 2. The Selberg zeta function $Z_M$ has only finitely
many zeroes in $\{\Re s>{1\over 2}\}$; that is, $M$ has
an essential spectral gap of size 0. Therefore, Theorem~\ref{t:gap}
only gives new information when $\delta\leq {1\over 2}+\tilde\varepsilon$
for a small global constant $\tilde\varepsilon>0$.
In~\cite{fullgap} the authors proved that there exists
$\varepsilon>0$ (depending on the surface $M$)
such that $Z_M$ only has finitely many zeroes in $\{\Re s>{1\over 2}-\varepsilon\}$.
The latter result is only interesting when $\delta\geq {1\over 2}$. Therefore~\cite{fullgap}
and the present paper overlap only when $\delta\approx {1\over 2}$, and in the latter
case the present paper gives a stronger result (since $\varepsilon_0$ depends only on $\delta$).
In view of the methods used in~\cite{fullgap} a higher-dimensional extension
of that result seems difficult at the present. See Figure~\ref{f:fup}.

\begin{figure}
\includegraphics{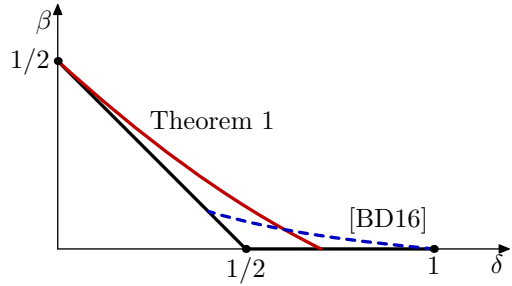}
\caption{The dependence on~$\delta$ of the essential spectral gap $\beta$ (that is, a number
such that $Z_M$ has only finitely many zeroes in $\{\Re s>{1\over 2}-\beta\}$), showing curves representing the bounds of
Theorem~\ref{t:gap} and of~\cite{fullgap}. These curves are for illustration purposes only,
the actual size of the improvement is expected to be much smaller.
The value of $\beta$ from~\cite{fullgap} depends on the surface $M$
but the value given by Theorem~\ref{t:gap} only depends on $\delta$.
The solid black line is the standard (Patterson--Sullivan and Lax--Phillips) gap
$\beta=\max(0,{1\over 2}-\delta)$.}
\label{f:fup}
\end{figure}

\noindent 3. The constant $\varepsilon_1$ can be chosen
increasing in $\delta$, and thus can be made continuous
in~$\delta$~-- see the paragraph preceding~\S\ref{s:higher-dim}.

\noindent 4.
In the more general setting of scattering on manifolds with hyperbolic trapped
sets, the Patterson--Sullivan gap is replaced by the \emph{pressure gap},
established by Ikawa~\cite{Ikawa},
Gaspard--Rice~\cite{GaspardRice}, and Nonnenmacher--Zworski~\cite{NonnenmacherZworskiActa}.
See the reviews of Nonnenmacher~\cite{Nonnenmacher}
and Zworski~\cite{ZworskiReview} for the
history of the spectral gap question and~\cite{hgap,regfup}
for an overview of more recent developments.

\noindent 5. Dyatlov--Jin~\cite{regfup} gave a bound on $\varepsilon_0$
depending only on $\delta$ and the regularity constant
(that is, the constant $C_\Gamma$ in Lemma~\ref{l:ad-regularity}),
proving a fractal uncertainty principle for more general Ahlfors--David
regular sets. Our proof removes the dependence of $\varepsilon_0$
on~$C_\Gamma$
by using the nonlinear nature of the transformations
in the group $\Gamma$. In fact, the earlier work of Dyatlov--Jin~\cite[Proposition~3.17]{oqm}
gives examples of Cantor sets with $\delta\in (0,1/2]$ which
are invariant under a group of linear transformations and do not
satisfy the fractal uncertainty principle we derive for hyperbolic limit sets
here (Propositions~\ref{l:fup-measure} and~\ref{l:fup}).

The key new component of the proof of Theorem~\ref{t:gap},
established in~\S\ref{s:fourier},
is the following generalized Fourier decay bound
for the Patterson--Sullivan measure:
\begin{theo}
  \label{t:fourier}
Let $M,\delta$ be as in Theorem~\ref{t:gap} and denote by $\mu$ the Patterson--Sullivan
measure on the limit set $\Lambda_\Gamma\subset\mathbb R$. 
Assume that
$$
\varphi\in C^2(\mathbb R;\mathbb R),\quad
g\in C^1(\mathbb R;\mathbb C)
$$
are functions satisfying the following bounds for
some constant $C_{\varphi,g}$:
\begin{equation}
  \label{e:fourier-hyp-2}
\|\varphi\|_{C^2}+\|g\|_{C^1}\leq C_{\varphi, g},\quad
\inf_{\Lambda_\Gamma}|\varphi'|\geq C_{\varphi, g}^{-1}.
\end{equation}
Then there exists $\varepsilon_1>0$ depending
only on $\delta$ and there exists $C>0$ depending on $M,C_{\varphi,g}$ such that
\begin{equation}
  \label{e:fourier}
\bigg|\int_{\Lambda_\Gamma} \exp\big(i\xi\varphi(x)\big)g(x)\,d\mu(x)\bigg|\leq
C|\xi|^{-\varepsilon_1}\quad\text{for all }
\xi,\quad
|\xi|>1.
\end{equation}
\end{theo}
\Remarks
1. By taking $\varphi(x)=x$, $g\equiv 1$ on $\Lambda_\Gamma$, we obtain the Fourier
decay bound $\hat\mu(\xi)=\mathcal O(|\xi|^{-\varepsilon_1})$.
This implies that the Fourier dimension $\dim_F\Lambda_\Gamma$
is positive, specifically $\dim_F\Lambda_\Gamma\geq 2\varepsilon_1$.
The nonlinearity of transformations in $\Gamma$ is crucial for obtaining
Fourier decay, since there exist limit sets of linear transformations
(for instance, the mid-third Cantor set) whose Fourier dimension is equal to zero~--
see~\cite[\S12.17]{Mattila}.
Previously Jordan--Sahlsten~\cite{JordanSahlsten}
used a similar nonlinearity property to obtain
Fourier decay for Gibbs measures for the Gauss map which have dimension greater than $1/2$.
(The method of the present paper can be adapted to prove~\cite[Theorem~1.3]{JordanSahlsten}
without the dimensional assumption.)

\noindent 2. The key tool in the proof of Theorem~\ref{t:fourier}
is an estimate on decay of exponential sums established by the
first author~\cite{SumProduct}, see Proposition~\ref{l:bourgain-literally}
and the following remark. In particular our proof relies on the
discretized sum-product theorem for $\mathbb R$.

\noindent 3. The constant $\varepsilon_1$ can be chosen an increasing function of $\delta$.
Indeed, it is determined by the constants $\varepsilon_3,\varepsilon_4,k$
from Proposition~\ref{l:bourgain-literally}, see~\eqref{e:epsilon-1-revealed}
and the proof of Proposition~\ref{l:bourger}. However, Proposition~\ref{l:bourgain-literally}
holds for same $\varepsilon_3,\varepsilon_4,k$ and all larger values of~$\delta$
since the condition~\eqref{e:bl-1} is stronger for larger values
of $\delta_1$ and we apply this proposition with $\delta_1=\delta/24$.

Given Theorem~\ref{t:fourier}, we establish a fractal uncertainty
principle for the limit set $\Lambda_\Gamma$, see Propositions~\ref{l:fup-measure}
and~\ref{l:fup}.
Then Theorem~\ref{t:gap} follows by combining the fractal
uncertainty principle with the results of~\cite{hgap},
see~\S\ref{s:fup}.
The value of $\varepsilon_0$ in Theorem~\ref{t:gap}
can be any number strictly less than $\varepsilon_1/4$,
where $\varepsilon_1$ is obtained in Theorem~\ref{t:fourier},
and thus can be chosen increasing as a function of $\delta$.

\subsection{Extensions to higher dimensional situations}
  \label{s:higher-dim}

While we do not pursue the case of higher-dimensional convex co-compact
hyperbolic quotients in this paper, we briefly discuss a possible generalization of
Theorem~\ref{t:gap} to the case of three-dimensional quotients $M=\Gamma\backslash\mathbb H^3$
with $\Gamma\subset\SL(2,\mathbb C)$ a Kleinian group.

The limit set $\Lambda_\Gamma$ is contained in $\dot{\mathbb C}:=\mathbb C\cup \{\infty\}$
and it is invariant under the action of $\Gamma$ on $\dot{\mathbb C}$
by complex M\"obius transformations. The Patterson--Sullivan measure
is equivariant under $\Gamma$ similarly to~\eqref{e:psin-1}.

Linearizing M\"obius transformations leads to complex multiplication
and the need of a complex analogue of our main tool, Proposition~\ref{l:bourgain-literally}.
In this analogue the measure $\mu_0$ is supported on the annulus
$\{z\in\mathbb C\colon 1/2\leq |z|\leq 2\}$, the
box dimension estimate~\eqref{e:bl-1} is replaced by
\begin{equation}
  \label{e:boxdim-higher}
\sup_{x,\theta\in\mathbb R}
\mu_0\big\{z\colon \Im(e^{i\theta}z)\in [x-\sigma,x+\sigma]\big\}
<\sigma^{\delta_1}
\end{equation}
and the conclusion~\eqref{e:bl-2} is replaced by
$$
\bigg|\int
\exp\big(2\pi i\eta \Im(e^{i\theta} z_1\cdots z_k)\big)\,d\mu_0(z_1)\cdots d\mu_0(z_k)
\bigg|\leq N^{-\varepsilon_4},\quad
\theta\in\mathbb R.
$$
This complex analogue of Proposition~\ref{l:bourgain-literally}
can be shown by following the proof of~\cite[Lemma~8.43]{SumProduct}
and replacing the real version of the sum-product theorem~\cite[Theorem~1]{SumProduct}
by its complex version established in~\cite[Proposition~2]{BourgainGamburd}.

However, the box dimension bound~\eqref{e:boxdim-higher} is more subtle than
in the case of surfaces. Indeed, in the case of a hyperbolic cylinder
(i.e. when $\Gamma$ is a co-compact subgroup of~$\SL(2,\mathbb R)$,
with $\delta=1$)
the limit set $\Lambda_\Gamma$ is equal to $\mathbb R\subset\mathbb C$
and the Patterson--Sullivan measure equals the Poisson measure
$\pi^{-1}(1+x^2)^{-1}\,dx$.
In this case, both~\eqref{e:boxdim-higher} and the Fourier decay bound~\eqref{e:fourier}
fail.

In fact, for hyperbolic cylinders
the specific fractal uncertainty principle~\cite[Definition~1.1]{hgap}
used to establish the spectral gap still holds (and does recover
the correct size of the spectral gap, equal to~${1\over 2}$), however the
general fractal uncertainty principle (Proposition~\ref{l:fup-measure}) fails
if we take the phase function $\Phi(z,w)=\Im(zw)$ which
restricts to 0 on $\Lambda_\Gamma\times\Lambda_\Gamma=\mathbb R^2\subset\mathbb C^2$
but has nondegenerate matrix of mixed derivatives $\partial_{(z,\bar z)}\partial_{(w,\bar w)}\Phi$.

\section{Structure of the limit set}
\label{s:limit-structure}

In this section, we study limit sets of convex co-compact quotients,
as well as the associated group action and Patterson--Sullivan measure,
establishing their properties which form the basis for the proof of the Fourier decay
bound in~\S\ref{s:fourier}.

Let $M=\Gamma\backslash\mathbb H^2$ be a convex co-compact hyperbolic surface.
Here $\mathbb H^2$ is the upper half-plane model of the
hyperbolic plane and $\Gamma$ is a convex co-compact
(in particular, discrete) subgroup of
$\SL(2,\mathbb R)$ acting isometrically on $\mathbb H^2$ by
M\"obius transformations:
$$
\gamma=\begin{pmatrix} a & b \\ c & d \end{pmatrix}\in\SL(2,\mathbb R),\quad
z\in \mathbb H^2=\{z\in\mathbb C\mid \Im z >0\}
\quad\Longrightarrow\quad
\gamma(z)={az+b\over cz+d}.
$$
The action of $\SL(2,\mathbb R)$ extends continuously to the compactified
hyperbolic plane
$$
\overline{\mathbb H^2}:=\mathbb H^2\cup \dot{\mathbb R},\quad
\dot{\mathbb R}:=\mathbb R\cup\{\infty\}.
$$
See for instance the book of Borthwick~\cite[Chapter~2]{BorthwickBook}
for more details.

We assume that $M$ is nonelementary and noncompact
and introduce the following notation:
\begin{itemize}
\item $\delta\in (0,1)$, the exponent of convergence of Poincar\'e series,
see~\cite[\S2.5.2]{BorthwickBook};
\item $\Lambda_\Gamma\subset\dot{\mathbb R}$, the limit set of the group $\Gamma$,
see~\cite[\S2.2.1]{BorthwickBook};
\item $\mu$, the Patterson--Sullivan measure (centered at $i\in\mathbb H^2$)
which is a probability measure on $\dot{\mathbb R}$ supported
on $\Lambda_\Gamma$, see~\cite[\S14.1]{BorthwickBook}.
\end{itemize}

\subsection{Schottky groups}
\label{s:schottky}

A \emph{Schottky group} is a convex co-compact subgroup $\Gamma\subset\SL(2,\mathbb R)$
constructed in the following way (see~\cite[\S15.1]{BorthwickBook}
and Figure~\ref{f:schottky}):
\begin{itemize}
\item Fix nonintersecting closed
half-disks $D_1,\dots,D_{2r}\subset \overline{\mathbb H^2}$ centered
on the real line. Here $r\in\mathbb N$ and for the nonelementary cases studied
here, we have $r\geq 2$.
\item Put $\mathcal A:=\{1,\dots,2r\}$ and for each
$a\in\mathcal A$, denote
$$
\overline{a}:=\begin{cases} a+r,& 1\leq a\leq r;\\
a-r, & r+1\leq a\leq 2r.
\end{cases}
$$
\item Fix transformations $\gamma_1,\dots,\gamma_{2r}\in\SL(2,\mathbb R)$ such that
for all $a\in\mathcal A$,
\begin{equation}
  \label{e:schottky-mapping}
\gamma_a(\overline{\mathbb H^2}\setminus D_{\overline a}^\circ)=D_a,\quad
\gamma_{\overline a}=\gamma_a^{-1}.
\end{equation}
\item Let $\Gamma\subset\SL(2,\mathbb R)$ be the free group generated by $\gamma_1,\dots,\gamma_r$.
\end{itemize}
Each convex co-compact group $\Gamma\subset\SL(2,\mathbb R)$ can be represented
in the above way for some choice of $D_1,\dots,D_{2r}$, $\gamma_1,\dots,\gamma_{2r}$,
see~\cite[Theorem~15.3]{BorthwickBook}.
We henceforth fix a Schottky structure for $\Gamma$.

\noindent\textbf{Notation:}
In the rest of the paper, $C_\Gamma$ denotes constants which only depend
on the Schottky data $D_1,\dots,D_{2r},\gamma_1,\dots,\gamma_{2r}$,
whose exact value may differ in different places.
The elements of $\Gamma$ are indexed by words on the generators
$\gamma_1,\dots,\gamma_{2r}$. We introduce some useful combinatorial notation:
\begin{itemize}
\item For $n\in\mathbb N_0$, define $\mathcal W_n$,
the set of words of length $n$, by
$$
\mathcal W_n:=\{a_1\dots a_n\mid a_1,\dots,a_n\in \mathcal A,\quad
a_{j+1}\neq \overline{a_j}\quad\text{for }j=1,\dots,n-1\}.
$$
Denote by $\mathcal W:=\bigcup_n\mathcal W_n$ the set
of all words, and for $\mathbf a\in\mathcal W_n$,
put $|\mathbf a|:=n$. Denote the empty word by $\emptyset$
and put $\mathcal W^\circ:=\mathcal W\setminus \{\emptyset\}$.
For $\mathbf a=a_1\dots a_n\in\mathcal W$, put
$\overline{\mathbf a}:=\overline{a_n}\dots \overline{a_1}\in\mathcal W$.
If $\mathbf a\in\mathcal W^\circ$,
put $\mathbf a':=a_1\dots a_{n-1}\in\mathcal W$.
Note that $\mathcal W$ forms a tree with root $\emptyset$ and
each $\mathbf a\in\mathcal W^\circ$ having parent $\mathbf a'$.
\item For $\mathbf a=a_1\dots a_n,
\mathbf b=b_1\dots b_m\in\mathcal W$, we write $\mathbf a\to\mathbf b$
if either at least one of $\mathbf a,\mathbf b$
is empty or $a_n\neq \overline{b_1}$. Under this condition
the concatenation $\mathbf a\mathbf b$ is a word.
\item For $\mathbf a,\mathbf b\in \mathcal W$, we write $\mathbf a\prec\mathbf b$
if $\mathbf a$ is a prefix of $\mathbf b$, that is $\mathbf b=\mathbf a\mathbf c$
for some $\mathbf c\in\mathcal W$.
\item For $\mathbf a=a_1\dots a_n,\mathbf b= b_1\dots b_m\in\mathcal W^\circ$,
we write $\mathbf a\rightsquigarrow \mathbf b$ if $a_n=b_1$. Note
that when $\mathbf a\rightsquigarrow\mathbf b$,
the concatenation $\mathbf a'\mathbf b$ is a word of length $n+m-1$.
\item A finite set $Z\subset\mathcal W^\circ$ is called
a \emph{partition} if there exists $N$ such that
for each $\mathbf a\in\mathcal W$ with $|\mathbf a|\geq N$,
there exists unique $\mathbf b\in Z$ such that $\mathbf b\prec\mathbf a$.
\end{itemize}
%
\begin{figure}
\includegraphics{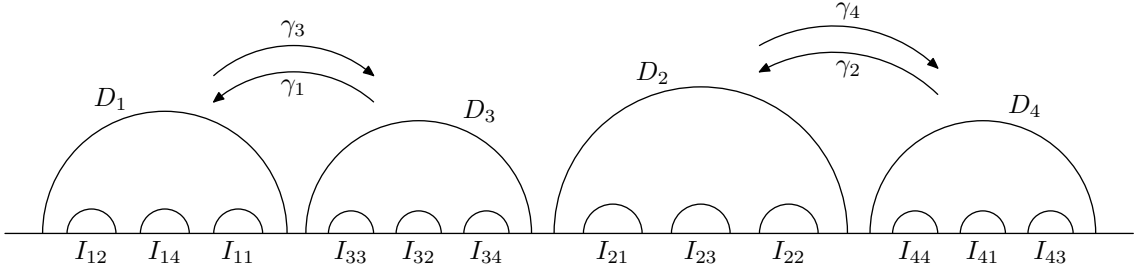}
\caption{A Schottky structure with $r=2$.}
\label{f:schottky}
\end{figure}
For each $\mathbf a=a_1\dots a_n\in\mathcal W$, define the group element
$\gamma_{\mathbf a}\in\Gamma$ by
$$
\gamma_{\mathbf a}:=\gamma_{a_1}\dots\gamma_{a_n}.
$$
Note that each element of $\Gamma$ is equal to $\gamma_{\mathbf a}$
for a unique choice of $\mathbf a$
and $\gamma_{\overline{\mathbf a}}=\gamma_{\mathbf a}^{-1}$,
$\gamma_{\mathbf a\mathbf b}=\gamma_{\mathbf a}\gamma_{\mathbf b}$
when $\mathbf a\to\mathbf b$.

To study the action of $\Gamma$ on $\dot{\mathbb R}$, consider the intervals
$$
I_a:=D_a\cap \dot{\mathbb R}\ \subset\  \mathbb R. 
$$
For each $\mathbf a=a_1\dots a_n\in\mathcal W^\circ$, define the interval $I_{\mathbf a}$ as follows
(see Figure~\ref{f:schottky}):
$$
I_{\mathbf a}:=\gamma_{\mathbf a'}(I_{a_n}).
$$
By~\eqref{e:schottky-mapping}, we have $I_{\mathbf b}\subset I_{\mathbf a}$ when
$\mathbf a\prec \mathbf b$ and $I_{\mathbf a}\cap I_{\mathbf b}=\emptyset$
when $|\mathbf a|=|\mathbf b|$, $\mathbf a\neq \mathbf b$.
The limit set is given by
\begin{equation}
  \label{e:limit-int}
\Lambda_\Gamma:=\bigcap_n\bigsqcup_{\mathbf a\in\mathcal W_n}I_{\mathbf a}.
\end{equation}
A finite set $Z\subset\mathcal W^\circ$ is a partition if and only if
\begin{equation}
  \label{e:partition-intervals}
\Lambda_\Gamma=\bigsqcup_{\mathbf a\in Z} (I_{\mathbf a}\cap \Lambda_\Gamma).
\end{equation}
Denote by $|I|$ the size of an interval $I\subset\mathbb R$.
The following contraction property is proved in~\S\ref{s:der-est-2}:
\begin{equation}
  \label{e:eventually-contracting}
\mathbf a\in\mathcal W^\circ,\
b\in\mathcal A,\
\mathbf a\to b
\quad\Longrightarrow\quad
|I_{\mathbf ab}|\leq (1-C_\Gamma^{-1})|I_{\mathbf a}|.
\end{equation}
Note that~\eqref{e:eventually-contracting} implies the bound
\begin{equation}
  \label{e:eventually-contracting-2}
\mathbf a,\mathbf b\in\mathcal W^\circ,\
\mathbf a\prec\mathbf b
\quad\Longrightarrow\quad
|I_{\mathbf b}|\leq (1-C_\Gamma^{-1})^{|\mathbf b|-|\mathbf a|}|I_{\mathbf a}|
\end{equation}
which gives exponential decay of the sizes of the intervals $I_{\mathbf a}$:
\begin{equation}
  \label{e:eventually-contracting-3}
\mathbf a\in\mathcal W^\circ
\quad\Longrightarrow\quad
|I_{\mathbf a}|\leq C_\Gamma(1-C_\Gamma^{-1})^{|\mathbf a|}.
\end{equation}
We finally describe the collection of words discretizing to a certain resolution.
For $\tau>0$, let $Z(\tau)\subset\mathcal W^\circ$ be defined 
as follows:
\begin{equation}
  \label{e:Z-tau}
Z(\tau)=\{\mathbf a\in\mathcal W^\circ\colon
|I_{\mathbf a}|\leq \tau < |I_{\mathbf a'}|\},
\end{equation}
where we put $|I_{\emptyset}|:=\infty$.
It follows from~\eqref{e:eventually-contracting-3} that $Z(\tau)$ is a partition.
See Figure~\ref{f:partition}.
\begin{figure}
\includegraphics{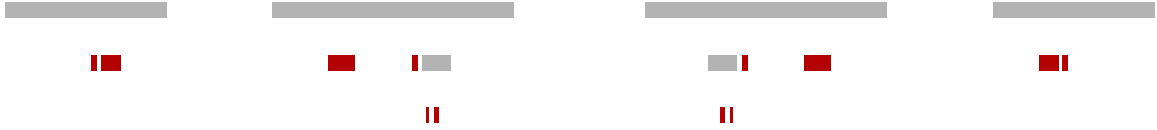}
\caption{A numerically computed example of a partition $Z(\tau)$.
The elements of the partition are in dark red and the preceding intervals
on the tree are in light gray.}
\label{f:partition}
\end{figure}

\subsection{Distortion estimates for M\"obius transformations}
  \label{s:der-est}
  
Let $\mathbf a=a_1\dots a_n$ be a long word. Recall
that $I_{\mathbf a}=\gamma_{\mathbf a'}(I_{a_n})$. In~\S\ref{s:der-est-2} below
 we study
how the derivative $\gamma_{\mathbf a'}$
varies on the interval $I_{a_n}$, in particular
how much it deviates from its average value $|I_{\mathbf a}|/|I_{a_n}|$.
The results of~\S\ref{s:der-est-2} rely on several statements about
general M\"obius transformation which are proved in this section.

Let $\gamma\in \SL(2,\mathbb R)$ and assume that $\gamma(I)=J$
for some intervals $I,J\subset \mathbb R$.
Define the \emph{distortion factor} of~$\gamma$ on~$I$ by
\begin{equation}
  \label{e:distortion-factor}
\alpha(\gamma,I):=\log{\gamma^{-1}(\infty)-x_1\over \gamma^{-1}(\infty)-x_0} \in \mathbb R\quad
\text{where }I=[x_0,x_1].
\end{equation}
If $\gamma^{-1}(\infty)=\infty$, then we put $\alpha(\gamma,I):=0$.
The transformation $\gamma$ can be described in terms of $I$, $J$, and $\alpha(\gamma,I)$ as follows:
\begin{equation}
  \label{e:gammader-1}
\gamma=\gamma_J\,\gamma_{\alpha(\gamma,I)}\,
\gamma_I^{-1},\quad
\gamma_\alpha=\begin{pmatrix} e^{\alpha/2} & 0 \\ e^{\alpha/2}-e^{-\alpha/2} & e^{-\alpha/2} \end{pmatrix}\in \SL(2,\mathbb R).
\end{equation}
Here $\gamma_I,\gamma_J\in\SL(2,\mathbb R)$ are the unique affine transformations
such that $\gamma_I([0,1])=I$, $\gamma_J([0,1])=J$.
To see~\eqref{e:gammader-1}, it suffices to note that
$$
\gamma_J\gamma_{\alpha(\gamma,I)}\gamma_I^{-1}(I)=J,\quad
\gamma_J\gamma_{\alpha(\gamma,I)}\gamma_I^{-1}(\gamma^{-1}(\infty))=\infty.
$$
See Figure~\ref{f:distortion}.
The formula~\eqref{e:gammader-1} implies the following identity:
\begin{equation}
  \label{e:gammader-2}
\gamma'(x)=\gamma'_{\alpha(\gamma,I)}(\gamma_I^{-1}(x))\cdot {|J|\over |I|}.
\end{equation}
Our first lemma states that as long as the distortion factor
is controlled, the derivatives $\gamma'$ at different points of~$I$
do not differ too much from each other and from the average: 
\begin{lemm}
  \label{l:gamma-derivative}
Assume that $\gamma(I)=J$ as above.
Then we have for all $x,y\in I$
\begin{gather}
  \label{e:gamma-derivative-1}
e^{-|\alpha(\gamma,I)|}\cdot{|J|\over|I|}\leq \gamma'(x)\leq e^{|\alpha(\gamma,I)|}\cdot{|J|\over |I|},
\\
  \label{e:gamma-derivative-2}
{\gamma'(x)\over\gamma'(y)}\leq \exp\Big(2e^{|\alpha(\gamma,I)|}\cdot{|x-y|\over |I|}\Big).
\end{gather}
\end{lemm}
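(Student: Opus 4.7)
The plan is to reduce everything to an explicit computation for the one-parameter normal form $\gamma_\alpha$ on $[0,1]$ via the factorization \eqref{e:gammader-1}, since \eqref{e:gammader-2} already shows that $\gamma'(x)$ equals $\gamma_\alpha'(\gamma_I^{-1}(x))\cdot|J|/|I|$ with $\gamma_I^{-1}(x)\in[0,1]$. Both inequalities therefore follow if one understands the pointwise size and the Lipschitz behavior of $\gamma_\alpha'$ on $[0,1]$, where $\alpha=\alpha(\gamma,I)$.

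First I would write down $\gamma_\alpha(z)$ explicitly from the matrix in \eqref{e:gammader-1} and compute $\gamma_\alpha'(z)=1/D(z)^2$ where $D(z):=(e^{\alpha/2}-e^{-\alpha/2})z+e^{-\alpha/2}$ is an affine function. Since $D$ takes the values $e^{-\alpha/2}$ and $e^{\alpha/2}$ at the endpoints $z=0,1$, it is monotone on $[0,1]$ with minimum $e^{-|\alpha|/2}$ and maximum $e^{|\alpha|/2}$, in particular $D>0$. Squaring and inverting yields $e^{-|\alpha|}\le \gamma_\alpha'(z)\le e^{|\alpha|}$ for $z\in[0,1]$. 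Plugging this into \eqref{e:gammader-2} gives \eqref{e:gamma-derivative-1} immediately.

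For \eqref{e:gamma-derivative-2} I would set $u=\gamma_I^{-1}(x)$, $v=\gamma_I^{-1}(y)$, so $|u-v|=|x-y|/|I|$, and use
\[
\frac{\gamma'(x)}{\gamma'(y)}=\frac{\gamma_\alpha'(u)}{\gamma_\alpha'(v)}=\left(\frac{D(v)}{D(u)}\right)^{\!2}=\left(1+\frac{(e^{\alpha/2}-e^{-\alpha/2})(v-u)}{D(u)}\right)^{\!2}.
\]
The key estimate is that the slope of $D$ satisfies $|e^{\alpha/2}-e^{-\alpha/2}|\le e^{|\alpha|/2}$ while $D(u)\ge e^{-|\alpha|/2}$, so the coefficient of $(v-u)$ in the brackets is bounded in absolute value by $e^{|\alpha|}$. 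Then $D(v)/D(u)\le 1+e^{|\alpha|}|v-u|\le\exp(e^{|\alpha|}|v-u|)$, and squaring gives the claim \eqref{e:gamma-derivative-2} after restoring $|v-u|=|x-y|/|I|$.

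The main (and essentially only) point to get right is the bound on $|e^{\alpha/2}-e^{-\alpha/2}|/D(u)$: one has to notice that $D(u)$ is minimized at an endpoint of $[0,1]$, with value $e^{-|\alpha|/2}$ regardless of the sign of $\alpha$, which is what makes the estimate symmetric in $\alpha$. Everything else is direct calculation from the explicit matrix form of $\gamma_\alpha$ in \eqref{e:gammader-1}.
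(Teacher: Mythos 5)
Your proof is correct and follows essentially the same route as the paper: reduce via \eqref{e:gammader-1}--\eqref{e:gammader-2} to the normal form $\gamma_\alpha$ on $[0,1]$, then compute. The only stylistic difference is in the second estimate, where the paper bounds $|(\log\gamma_\alpha')'|\leq 2e^{|\alpha|}$ and applies the mean value theorem, while you exploit the affine structure of $D(z)=1/\sqrt{\gamma_\alpha'(z)}$ and bound the ratio $D(v)/D(u)$ directly; both yield the same constant $2e^{|\alpha|}$.
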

\begin{proof}
We estimate for each $\alpha\in\mathbb R$
$$
\gamma'_\alpha(x)={e^\alpha\over ((e^\alpha-1)x+1)^2}\in[e^{-|\alpha|}, e^{|\alpha|}]\quad\text{for }
x\in [0,1]
$$
which together with~\eqref{e:gammader-2} implies~\eqref{e:gamma-derivative-1}.
Next, we have
$$
\big|(\log\gamma_\alpha'(x))'\big|=\Big|{2(1-e^{\alpha})\over (e^\alpha-1)x+1}\Big|\leq 2e^{|\alpha|}\quad\text{for }x\in [0,1]
$$
which gives
$$
{\gamma_\alpha'(x)\over\gamma_\alpha'(y)}\leq \exp\big(
2e^{|\alpha|}\cdot |x-y|
\big)\quad\text{for }x,y\in [0,1].
$$
Combined with~\eqref{e:gammader-2}, this implies~\eqref{e:gamma-derivative-2}.
\end{proof}
%
\begin{figure}
\includegraphics{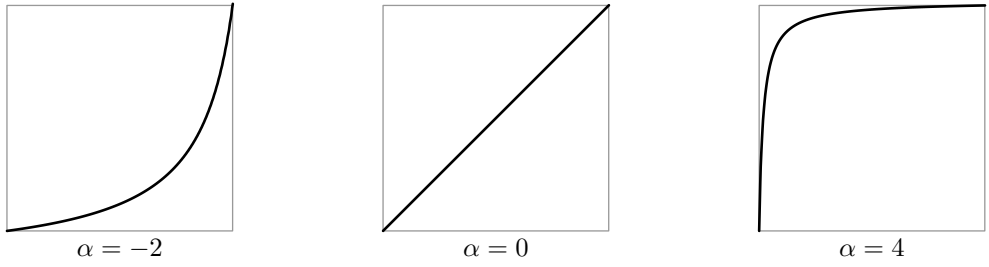}
\caption{Graphs of the transformation $\gamma_\alpha$ for several different
values of $\alpha$, with the square being $[0,1]^2$.}
\label{f:distortion}
\end{figure}
As a corollary of~\eqref{e:gamma-derivative-1} and the change of variable formula,
we immediately obtain
\begin{lemm}
  \label{l:gamma-integral}
Assume that $\gamma(I)=J$ as above and let $I'\subset I$ be a Borel subset.
Then, denoting by $|\bullet|$ the Lebesgue measure on the line, we have
\begin{equation}
  \label{e:gamma-integral}
e^{-|\alpha(\gamma,I)|}\cdot{|I'|\cdot |J|\over|I|}\leq |\gamma(I')|\leq e^{|\alpha(\gamma,I)|}\cdot{|I'|\cdot|J|\over |I|}.
\end{equation}
\end{lemm}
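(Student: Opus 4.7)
The plan is essentially a one-line reduction to the pointwise derivative bound already established in Lemma~\ref{l:gamma-derivative}. First I would verify that the change of variables formula applies. Since $\gamma\in\SL(2,\mathbb R)$ is a M\"obius transformation with $\gamma(I)=J\subset\mathbb R$, its pole $\gamma^{-1}(\infty)$ cannot lie inside $I$ (otherwise $\gamma(I)$ would be unbounded or disconnected in $\dot{\mathbb R}$). Hence $\gamma|_I$ is a $C^1$ diffeomorphism of $I$ onto $J$ with $\gamma'>0$ throughout $I$, and for every Borel subset $I'\subset I$ the standard one-dimensional change of variables identity gives
\begin{equation*}
|\gamma(I')|=\int_{I'}\gamma'(x)\,dx.
\end{equation*}

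Next I would invoke the pointwise sandwich~\eqref{e:gamma-derivative-1} from Lemma~\ref{l:gamma-derivative}, namely
\begin{equation*}
e^{-|\alpha(\gamma,I)|}\cdot\frac{|J|}{|I|}\ \leq\ \gamma'(x)\ \leq\ e^{|\alpha(\gamma,I)|}\cdot\frac{|J|}{|I|}\qquad\text{for every }x\in I,
\end{equation*}
and integrate both the upper and the lower bound over $I'$. Since the two extremes are constant in $x$, the integration simply multiplies them by $|I'|$, and comparing with the previous display yields~\eqref{e:gamma-integral}.

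There is essentially no obstacle: the statement is just the pointwise derivative estimate averaged against Lebesgue measure on $I'$. The only conceptual point to double check is that $\gamma$ is genuinely monotone on $I$, so that the change of variables produces $|\gamma(I')|$ with the correct sign; this is immediate from $\gamma^{-1}(\infty)\notin I$ together with $\gamma'>0$ on $I$, which in turn is visible from the factorization~\eqref{e:gammader-1} (each of $\gamma_J$, $\gamma_{\alpha(\gamma,I)}$, and $\gamma_I^{-1}$ is orientation preserving on the relevant interval).
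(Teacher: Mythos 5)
Your proof is correct and coincides with the paper's argument: the paper's entire proof is the sentence preceding the lemma, ``As a corollary of~\eqref{e:gamma-derivative-1} and the change of variable formula, we immediately obtain,'' which is exactly the route you take. The extra detail you supply about $\gamma^{-1}(\infty)\notin I$ and $\gamma'>0$ on $I$ is a correct (if slightly more elaborate than needed) justification that the change-of-variables identity applies without absolute value subtleties.
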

The next lemma shows that transformations with different
distortion factors have significantly different derivatives.
It is an essential component of the proof of Theorem~\ref{t:fourier} which takes advantage
of the nonlinearity of M\"obius transformations.
\begin{lemm}
  \label{l:distortion-discrepancy}
Assume that $\gamma_1,\gamma_2\in\SL(2,\mathbb R)$ and
$I,J_1,J_2\subset \mathbb R$ are intervals such that
$\gamma_j(I)=J_j$. Let $L\subset \mathbb R$ be an interval.
Then the set of points $x$ satisfying
\begin{equation}
  \label{e:distortion-discrepancy}
x\in I,\quad
\log{\gamma_1'(x)\over\gamma_2'(x)}\in L
\end{equation}
is contained in an interval of size
$$
{e^{|\alpha(\gamma_1,I)|+|\alpha(\gamma_2,I)|}\cdot |I|\cdot |L|\over |\alpha(\gamma_1,I)-\alpha(\gamma_2,I)|}.
$$  
\end{lemm}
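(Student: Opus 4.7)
The plan is to reduce to a one-variable monotonicity analysis on the normalized interval $[0,1]$ and then transfer the result back to $I$ through the affine change of coordinates $\gamma_I$. The factorization~\eqref{e:gammader-1} is tailored for exactly this purpose: it isolates all of the nonlinearity of $\gamma_j$ into the explicit one-parameter family $\gamma_{\alpha_j}$.

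Setting $\alpha_j := \alpha(\gamma_j, I)$, I would factor $\gamma_j = \gamma_{J_j}\,\gamma_{\alpha_j}\,\gamma_I^{-1}$ and substitute $u := \gamma_I^{-1}(x) \in [0,1]$. By~\eqref{e:gammader-2}, the condition~\eqref{e:distortion-discrepancy} is then equivalent to
\[
F(u) \in L - \log\bigl(|J_1|/|J_2|\bigr), \qquad F(u) := \log\gamma'_{\alpha_1}(u) - \log\gamma'_{\alpha_2}(u).
\]
Differentiating the explicit formula $\gamma'_\alpha(u) = e^{\alpha}\bigl((e^\alpha-1)u+1\bigr)^{-2}$, the $u$-dependent cross-terms cancel and one obtains
\[
F'(u) = -\frac{2\,(e^{\alpha_1}-e^{\alpha_2})}{\bigl((e^{\alpha_1}-1)u+1\bigr)\bigl((e^{\alpha_2}-1)u+1\bigr)}.
\]
Each denominator factor is positive and linear in $u$ on $[0,1]$, so $F'$ has constant sign and $F$ is strictly monotone; in particular the preimage of any interval under $F$ is itself an interval.

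What remains is a lower bound on $|F'|$. Each factor $(e^{\alpha_j}-1)u+1$ lies between $1$ and $e^{\alpha_j}$ for $u \in [0,1]$, and the mean value theorem gives $|e^{\alpha_1}-e^{\alpha_2}| \geq e^{\min(\alpha_1,\alpha_2)}|\alpha_1-\alpha_2|$. Combining these, with a short case split on the signs of $(\alpha_1, \alpha_2)$, yields the uniform lower bound $|F'(u)| \geq 2\,e^{-|\alpha_1|-|\alpha_2|}|\alpha_1-\alpha_2|$. Thus the $u$-preimage is contained in an interval of length at most $e^{|\alpha_1|+|\alpha_2|}|L|/(2|\alpha_1-\alpha_2|)$, and pulling back through the affine bijection $\gamma_I\colon [0,1]\to I$ multiplies lengths by $|I|$, yielding the claimed bound.

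The one point that needs care is this last lower bound: the clean exponent $|\alpha_1|+|\alpha_2|$ emerges only if one matches the correct upper bound on the denominator of $F'$ with the right version of the mean value estimate in each sign configuration. In the regime \emph{both $\alpha_j\geq 0$} the MVT estimate is already sharp; in the \emph{mixed-sign} and \emph{both $\leq 0$} regimes the loss in the numerator is exactly compensated by the fact that one denominator factor is bounded by $1$ rather than by $e^{|\alpha_j|}$. Once these short case checks are verified, the lemma follows.
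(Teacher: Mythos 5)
Your proof is correct and follows essentially the same route as the paper: factor out the affine maps via~\eqref{e:gammader-1}, reduce to the normalized model maps $\gamma_\alpha$ on $[0,1]$, compute the derivative of the log-ratio, and note it has constant sign with the lower bound $|F'|\geq 2e^{-|\alpha_1|-|\alpha_2|}|\alpha_1-\alpha_2|$. You actually supply a detail the paper elides (the sign case analysis justifying that lower bound), and the case checks you outline are correct.
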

\begin{proof}
Denote $\alpha_j=\alpha(\gamma_j,I)$. For each $x\in I$ we have by~\eqref{e:gammader-2}
$$
\log{\gamma_1'(x)\over\gamma_2'(x)}=
\log{\gamma_{\alpha_1}'(y)\over\gamma_{\alpha_2}'(y)}+
\log{|J_1|\over |J_2|},\quad
y:=\gamma_I^{-1}(x).
$$
Therefore, \eqref{e:distortion-discrepancy}
corresponds to the set of all $y$ such that
\begin{equation}
  \label{e:dd-int}
y\in [0,1],\quad
\log{\gamma_{\alpha_1}'(y)\over\gamma_{\alpha_2}'(y)}\in \widetilde L
\end{equation}
where $\widetilde L$ is some interval with $|\widetilde L|=|L|$.
We compute
$$
\partial_y \log{\gamma_{\alpha_1}'(y)\over\gamma_{\alpha_2}'(y)}=
{2(1-e^{\alpha_1})\over (e^{\alpha_1}-1)y+1}
-{2(1-e^{\alpha_2})\over (e^{\alpha_2}-1)y+1}
={2(e^{\alpha_2}-e^{\alpha_1})\over ((e^{\alpha_1}-1)y+1)((e^{\alpha_2}-1)y+1)}.
$$
We then have for all $y\in [0,1]$
$$
\bigg|\partial_y \log{\gamma_{\alpha_1}'(y)\over\gamma_{\alpha_2}'(y)}\bigg|\geq
2e^{-|\alpha_1|-|\alpha_2|}\cdot |\alpha_1-\alpha_2|.
$$
It follows that the set of $y$ satisfying~\eqref{e:dd-int}
is an interval of size no more than
$$
{e^{|\alpha_1|+|\alpha_2|}\cdot |L|\over |\alpha_1-\alpha_2|}
$$
which finishes the proof.
\end{proof}

\subsection{Distortion estimates for Schottky groups}
  \label{s:der-est-2}

We now return to the setting of Schottky groups introduced in~\S\ref{s:schottky}.
We start by estimating the distortion factors
of transformations in $\Gamma$:
\begin{lemm}
  \label{l:alpha-bound}
We have
\begin{equation}
  \label{e:alpha-bound}
|\alpha(\gamma_{\mathbf a},I_b)|\leq C_\Gamma\quad\text{for all}\quad
\mathbf a\in\mathcal W,\ b\in\mathcal A,\
\mathbf a\to b.
\end{equation}
\end{lemm}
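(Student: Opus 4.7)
The plan is to show that the point $p := \gamma_{\mathbf a}^{-1}(\infty)$ appearing in the definition~\eqref{e:distortion-factor} lies in a fixed base interval $I_{\overline{a_n}}$ that is uniformly separated from, and within bounded distance of, $I_b$; then the two quantities $p-x_0$ and $p-x_1$ appearing in~\eqref{e:distortion-factor} are both bounded above and below by $\Gamma$-dependent constants, so their ratio has bounded logarithm. The case $\mathbf a=\emptyset$ is immediate from the convention that $\alpha:=0$ when $\gamma^{-1}(\infty)=\infty$, so I will assume $\mathbf a=a_1\dots a_n$ with $n\geq 1$.

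The key step is to trace the orbit of $\infty$ under $\gamma_{\mathbf a}^{-1}=\gamma_{\overline{\mathbf a}}=\gamma_{\overline{a_n}}\cdots\gamma_{\overline{a_1}}$. Since each half-disk $D_c$ is a bounded subset of $\overline{\mathbb H^2}$ centered on the real line, $\infty\notin D_{a_1}$, and the Schottky mapping property~\eqref{e:schottky-mapping} gives $\gamma_{\overline{a_1}}(\infty)\in D_{\overline{a_1}}$. I would then induct: assuming $\gamma_{\overline{a_j}}\cdots\gamma_{\overline{a_1}}(\infty)\in D_{\overline{a_j}}$, the legality condition $a_{j+1}\neq\overline{a_j}$ means $\overline{a_j}\neq a_{j+1}$, so the half-disks $D_{\overline{a_j}}$ and $D_{a_{j+1}}$ are distinct among the pairwise non-intersecting Schottky disks and hence disjoint; therefore the current point lies in $\overline{\mathbb H^2}\setminus D_{a_{j+1}}^\circ$, and a second application of~\eqref{e:schottky-mapping} produces a point in $D_{\overline{a_{j+1}}}$. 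After $n$ such steps, $p\in D_{\overline{a_n}}$. Since transformations in $\SL(2,\mathbb R)$ preserve $\dot{\mathbb R}$ and $D_{\overline{a_n}}\cap\dot{\mathbb R}=I_{\overline{a_n}}$ is a bounded real interval, we conclude $p\in I_{\overline{a_n}}$.

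Finally, the hypothesis $\mathbf a\to b$ translates to $a_n\neq\overline{b}$, i.e.\ $\overline{a_n}\neq b$, so $I_{\overline{a_n}}$ and $I_b$ are two distinct members of the finite family $I_1,\dots,I_{2r}$ of pairwise disjoint closed intervals. Their distance is therefore bounded below by some $C_\Gamma^{-1}>0$ while the diameter of $I_{\overline{a_n}}\cup I_b$ is bounded above by $C_\Gamma$, yielding $C_\Gamma^{-1}\leq |p-x_j|\leq C_\Gamma$ for $j=0,1$ and hence the desired bound on $|\alpha(\gamma_{\mathbf a},I_b)|$. The only non-mechanical step is the inductive verification that the $\gamma_{\overline{a_j}}$ orbit of $\infty$ stays inside the Schottky disks; once that is in place, the uniformity in $\mathbf a$ and $b$ is automatic from the finiteness of the Schottky data.
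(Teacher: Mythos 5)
Your proof is correct and follows the same route as the paper's: both hinge on showing that $\gamma_{\mathbf a}^{-1}(\infty)$ lies in $I_{\overline{a_n}}$, which is uniformly separated from $I_b$ by the Schottky disjointness together with $\mathbf a\to b$. The paper simply asserts the membership $\gamma_{\mathbf a}^{-1}(\infty)\in I_{\overline{a_n}}$ as a direct consequence of~\eqref{e:schottky-mapping}, whereas you spell out the inductive orbit-tracking argument behind it; the rest is identical.
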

\begin{proof}
We may assume that $\mathbf a\in\mathcal W^\circ$.
Let
$\mathbf a=a_1\dots a_n$. By~\eqref{e:schottky-mapping}, $\gamma_{\mathbf a}^{-1}(\infty)\in I_{\overline{a_n}}$.
Moreover, $\overline{a_n}\neq b$ since $\mathbf a\to b$. It remains to recall
the definition~\eqref{e:distortion-factor} and put
$$
C_\Gamma:=
2\max\big\{
\big|\log |x-y|\big|\colon
x\in I_a,\ 
y\in I_b,\
a,b\in\mathcal A,\
a\neq b
\big\}.
\qedhere
$$ 
\end{proof}
Lemma~\ref{l:alpha-bound} together with~\eqref{e:gamma-derivative-1},
\eqref{e:gamma-derivative-2}, and~\eqref{e:gamma-integral} immediately gives
\begin{lemm}
  \label{l:tree-derivative}
For all $\mathbf a=a_1\dots a_n\in\mathcal W^\circ$ and
$x,y\in I_{a_n}$, we have
\begin{gather}
  \label{e:tree-derivative-1}
C_\Gamma^{-1}|I_{\mathbf a}|\leq \gamma_{\mathbf a'}'(x)\leq C_\Gamma|I_{\mathbf a}|,
\\
  \label{e:tree-derivative-2}
{\gamma_{\mathbf a'}'(x)\over \gamma_{\mathbf a'}'(y)}\leq \exp\big(C_\Gamma|x-y|\big).
\end{gather}
Moreover, if $I'\subset I_{a_n}$ is a Borel set, then
\begin{equation}
  \label{e:tree-integral}
C_\Gamma^{-1}|I_{\mathbf a}|\cdot |I'|\leq |\gamma_{\mathbf a'}(I')|\leq C_\Gamma|I_{\mathbf a}|\cdot |I'|.
\end{equation}
\end{lemm}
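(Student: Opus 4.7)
The plan is to read off this lemma as a direct corollary of Lemma~\ref{l:alpha-bound} combined with the general Möbius distortion estimates established in Lemmas~\ref{l:gamma-derivative} and~\ref{l:gamma-integral}. The main point is that everything has already been set up: once we know the distortion factor $\alpha(\gamma_{\mathbf a'},I_{a_n})$ is controlled by a Schottky constant, the bounds from~\S\ref{s:der-est} translate immediately, provided we also use that the ``base'' intervals $I_a$, $a\in \mathcal A$, have lengths bounded above and below by $C_\Gamma$.

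First I would fix $\mathbf a = a_1\dots a_n \in\mathcal W^\circ$ and verify that $\mathbf a' \to a_n$, so that Lemma~\ref{l:alpha-bound} applies. If $n=1$, then $\mathbf a' = \emptyset$, $\gamma_{\mathbf a'} = \mathrm{id}$, and $I_{\mathbf a} = I_{a_n}$; in this case all three inequalities hold trivially by choosing $C_\Gamma$ large enough to bound $|I_a|$ and $|I_a|^{-1}$ for $a\in\mathcal A$. For $n\geq 2$, the constraint $a_n \neq \overline{a_{n-1}}$ built into the definition of $\mathcal W_n$ gives exactly the hypothesis $\mathbf a' \to a_n$ of Lemma~\ref{l:alpha-bound}, so
$$
|\alpha(\gamma_{\mathbf a'},I_{a_n})|\leq C_\Gamma.
$$

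Then I would invoke Lemma~\ref{l:gamma-derivative} with $\gamma=\gamma_{\mathbf a'}$, $I=I_{a_n}$, $J=\gamma_{\mathbf a'}(I_{a_n})=I_{\mathbf a}$. The inequality~\eqref{e:gamma-derivative-1} gives
$$
e^{-C_\Gamma}\,\frac{|I_{\mathbf a}|}{|I_{a_n}|}\leq \gamma_{\mathbf a'}'(x) \leq e^{C_\Gamma}\,\frac{|I_{\mathbf a}|}{|I_{a_n}|},
$$
and absorbing the bounded factor $|I_{a_n}|^{\pm 1}$ into $C_\Gamma$ yields~\eqref{e:tree-derivative-1}. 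Similarly, \eqref{e:gamma-derivative-2} produces
$$
\frac{\gamma_{\mathbf a'}'(x)}{\gamma_{\mathbf a'}'(y)}\leq \exp\!\bigl(2e^{C_\Gamma}\,|x-y|/|I_{a_n}|\bigr),
$$
which, after enlarging $C_\Gamma$ once more, gives~\eqref{e:tree-derivative-2}. Finally, Lemma~\ref{l:gamma-integral} applied in the same fashion to a Borel subset $I'\subset I_{a_n}$ yields~\eqref{e:tree-integral}.

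There is really no obstacle here; the only thing to be careful about is the bookkeeping of the constant $C_\Gamma$, which must now simultaneously absorb the distortion bound from Lemma~\ref{l:alpha-bound}, the upper and lower bounds on $|I_a|$ for $a\in \mathcal A$ (both of which are finite and positive by the Schottky assumption that the half-disks $D_1,\dots,D_{2r}$ are fixed and nonintersecting), and the factor $2e^{|\alpha|}$ appearing in~\eqref{e:gamma-derivative-2}. With this understanding, the three claims follow without further computation.
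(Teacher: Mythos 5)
Your proof is correct and matches the paper's approach exactly: the paper states this lemma is an immediate consequence of Lemma~\ref{l:alpha-bound} combined with~\eqref{e:gamma-derivative-1}, \eqref{e:gamma-derivative-2}, and~\eqref{e:gamma-integral}, which is precisely the argument you spell out (including the point, left implicit in the paper, that the $|I_{a_n}|^{\pm1}$ factors are absorbed into $C_\Gamma$ because the base intervals have uniformly bounded lengths). The separate treatment of the $n=1$ case is fine but not strictly necessary, since Lemma~\ref{l:alpha-bound} already covers $\mathbf a=\emptyset$ via the convention $\alpha(\mathrm{id},I)=0$.
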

Armed with Lemma~\ref{l:tree-derivative}, we give
\begin{proof}[Proof of~\eqref{e:eventually-contracting}]
We write $\mathbf a=a_1\dots a_n$. With $|\bullet|$ denoting the Lebesgue
measure on the line, we compute
$$
|I_{\mathbf ab}|=|\gamma_{\mathbf a'}(\gamma_{a_n}(I_b))|
=|\gamma_{\mathbf a'}(I_{a_n})|-|\gamma_{\mathbf a'}(I_{a_n}\setminus \gamma_{a_n}(I_b))|.
$$
Recall that $\gamma_{\mathbf a'}(I_{a_n})=I_{\mathbf a}$. Using~\eqref{e:tree-integral},
we obtain the lower bound
$$
|\gamma_{\mathbf a'}(I_{a_n}\setminus \gamma_{a_n}(I_b))|
\geq C_\Gamma^{-1}|I_{\mathbf a}|\cdot |I_{a_n}\setminus \gamma_{a_n}(I_b)|
\geq C_\Gamma^{-1}|I_{\mathbf a}|
$$
finishing the proof.
\end{proof}
We next show several estimates on the sizes and positions of the intervals $I_{\mathbf a}$:
\begin{lemm}[Parent-child ratio]
  \label{l:tree-regularity}
We have
\begin{equation}
  \label{e:tree-regularity}
C_\Gamma^{-1}|I_{\mathbf a}|\leq |I_{\mathbf ab}|\leq |I_{\mathbf a}|\quad\text{for all }
\mathbf a\in\mathcal W^\circ,\ b\in\mathcal A,\
\mathbf a\to b.
\end{equation}
\end{lemm}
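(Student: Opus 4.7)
The plan is to derive both bounds directly from the results already established, especially the integral form~\eqref{e:tree-integral} of the bounded-distortion lemma. The upper bound is essentially free: from the Schottky mapping property~\eqref{e:schottky-mapping} one has $I_{\mathbf a b} \subset I_{\mathbf a}$ whenever $\mathbf a \prec \mathbf a b$, so $|I_{\mathbf a b}| \leq |I_{\mathbf a}|$ follows by monotonicity of Lebesgue measure. The real content is the lower bound.

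For the lower bound, I would write $\mathbf a = a_1 \dots a_n$ and unfold the definition:
\begin{equation*}
I_{\mathbf a b} \;=\; \gamma_{\mathbf a}(I_b) \;=\; \gamma_{\mathbf a'}\bigl(\gamma_{a_n}(I_b)\bigr).
\end{equation*}
The key observation is that since $\mathbf a \to b$ forces $b \neq \overline{a_n}$, the interval $I_b$ lies outside $D_{\overline{a_n}}^\circ$, so by~\eqref{e:schottky-mapping} the image $\gamma_{a_n}(I_b)$ is contained in $I_{a_n}$. This puts us in position to apply~\eqref{e:tree-integral} with $I' := \gamma_{a_n}(I_b) \subset I_{a_n}$, yielding
\begin{equation*}
|I_{\mathbf a b}| \;=\; \bigl|\gamma_{\mathbf a'}\bigl(\gamma_{a_n}(I_b)\bigr)\bigr| \;\geq\; C_\Gamma^{-1}\,|I_{\mathbf a}|\cdot|\gamma_{a_n}(I_b)|.
\end{equation*}
Finally, $\gamma_{a_n}(I_b) = I_{a_n b}$ is one of only finitely many intervals indexed by pairs $(a_n, b) \in \mathcal A^2$ with $a_n \neq \overline{b}$, all of which have positive length determined solely by the Schottky data; so $|I_{a_n b}| \geq C_\Gamma^{-1}$ after enlarging $C_\Gamma$.

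There is no real obstacle here~-- the lemma is essentially a bookkeeping exercise built on top of Lemma~\ref{l:tree-derivative}. The only mild point to watch is that $I_b$ itself is \emph{not} a subset of $I_{a_n}$ (indeed the $I_a$'s are pairwise disjoint), so one must pass through the intermediate image $\gamma_{a_n}(I_b)$ before invoking~\eqref{e:tree-integral}; confusing $I_b$ with $\gamma_{a_n}(I_b)$ would make the argument fail at the step where bounded distortion is applied.
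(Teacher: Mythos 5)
Your proof is correct and follows essentially the same route as the paper: both write $I_{\mathbf a b}=\gamma_{\mathbf a'}(I')$ with $I'=\gamma_{a_n}(I_b)\subset I_{a_n}$ and apply~\eqref{e:tree-integral}. Your write-up just makes explicit the two small points the paper leaves implicit -- that $|I'|\geq C_\Gamma^{-1}$ by finiteness of the Schottky data, and that the upper bound comes from the containment $I_{\mathbf a b}\subset I_{\mathbf a}$ rather than from~\eqref{e:tree-integral}.
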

\begin{proof}
Denote $\mathbf a=a_1\dots a_n$ and note that
$I_{\mathbf ab}=\gamma_{\mathbf a'}(I')$
where $I':=\gamma_{a_n}(I_b)\subset I_{a_n}$.
Then~\eqref{e:tree-regularity} follows from~\eqref{e:tree-integral}.
\end{proof}
%
\begin{lemm}[Concatenation]
  \label{l:concatenate}
We have
\begin{equation}
  \label{e:concatenate}
C_\Gamma^{-1}|I_{\mathbf a}|\cdot|I_{\mathbf b}|\leq |I_{\mathbf a'\mathbf b}|\leq C_\Gamma|I_{\mathbf a}|\cdot|I_{\mathbf b}|\quad\text{
for all }\mathbf a,\mathbf b\in\mathcal W^\circ,\
\mathbf a\rightsquigarrow\mathbf b.
\end{equation}  
\end{lemm}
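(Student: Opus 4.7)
The plan is to reduce the concatenation estimate to a single application of the integral bound~\eqref{e:tree-integral} in Lemma~\ref{l:tree-derivative}. Write $\mathbf a=a_1\dots a_n$ and $\mathbf b=b_1\dots b_m$, so that the condition $\mathbf a\rightsquigarrow\mathbf b$ means $a_n=b_1$, and $\mathbf a'\mathbf b$ has length $n+m-1$. The key identity I would establish first is
$$
I_{\mathbf a'\mathbf b}=\gamma_{\mathbf a'}(I_{\mathbf b}).
$$
Indeed, by the definition of $I_{\mathbf a'\mathbf b}$ we have $I_{\mathbf a'\mathbf b}=\gamma_{(\mathbf a'\mathbf b)'}(I_{b_m})$, and $(\mathbf a'\mathbf b)'=\mathbf a'\mathbf b'$ when $m\geq 2$. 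Since $a_{n-1}\neq \overline{a_n}=\overline{b_1}$ (the word condition on $\mathbf a$ combined with $\mathbf a\rightsquigarrow\mathbf b$), the concatenation $\mathbf a'\mathbf b'$ is again a word and $\gamma_{\mathbf a'\mathbf b'}=\gamma_{\mathbf a'}\gamma_{\mathbf b'}$. Hence $I_{\mathbf a'\mathbf b}=\gamma_{\mathbf a'}(\gamma_{\mathbf b'}(I_{b_m}))=\gamma_{\mathbf a'}(I_{\mathbf b})$. The boundary case $m=1$ is even easier: then $\mathbf b=b_1=a_n$ and $\mathbf a'\mathbf b=\mathbf a$, so the identity degenerates to $I_{\mathbf a}=\gamma_{\mathbf a'}(I_{a_n})$, which is the definition.

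Next I would apply~\eqref{e:tree-integral} from Lemma~\ref{l:tree-derivative} to the word $\mathbf a$ with Borel set $I':=I_{\mathbf b}\subset I_{a_n}$ (the inclusion holding because $b_1=a_n$, so $I_{\mathbf b}\subset I_{b_1}=I_{a_n}$). This gives directly
$$
C_\Gamma^{-1}|I_{\mathbf a}|\cdot|I_{\mathbf b}|\leq |\gamma_{\mathbf a'}(I_{\mathbf b})|\leq C_\Gamma|I_{\mathbf a}|\cdot|I_{\mathbf b}|,
$$
which in view of the identity above is precisely~\eqref{e:concatenate}.

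There is no real obstacle beyond verifying the combinatorial bookkeeping that $\mathbf a'\mathbf b'$ and $\mathbf a'\mathbf b$ are admissible words, which reduces to checking $a_{n-1}\neq \overline{b_1}$; this follows from the definition of $\mathcal W_n$ together with $b_1=a_n$. Once the identity $I_{\mathbf a'\mathbf b}=\gamma_{\mathbf a'}(I_{\mathbf b})$ is in hand, the estimate is immediate from the already-proved distortion bounds for Schottky groups.
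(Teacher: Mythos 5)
Your proposal is correct and follows essentially the same route as the paper: both establish the identity $I_{\mathbf a'\mathbf b}=\gamma_{\mathbf a'}(I_{\mathbf b})$ and then apply the integral distortion bound~\eqref{e:tree-integral} with $I'=I_{\mathbf b}\subset I_{a_n}$. The paper simply states this in one line (referring back to the proof of Lemma~\ref{l:tree-regularity}), while you spell out the combinatorial verification that $\mathbf a'\mathbf b'$ is an admissible word; that checking is sound.
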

\begin{proof}
This follows from~\eqref{e:tree-integral} similarly to Lemma~\ref{l:tree-regularity},
using that $I_{\mathbf a'\mathbf b}=\gamma_{\mathbf a'}(I_{\mathbf b})$.
\end{proof}
%
\begin{lemm}[Reversal]
  \label{l:reversible}
We have
\begin{equation}
  \label{e:reversible}
C_\Gamma^{-1}|I_{\mathbf a}| \leq |I_{\overline{\mathbf a}}|\leq C_\Gamma|I_{\mathbf a}|
\quad\text{for all }\mathbf a\in \mathcal W^\circ.
\end{equation}
\end{lemm}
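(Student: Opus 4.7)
The plan is to compare the derivatives of $\gamma_{\mathbf a}$ and $\gamma_{\overline{\mathbf a}}$ at a common reference point $y_0\in\mathbb R$. The key observation is that for $\gamma\in\SL(2,\mathbb R)$ with matrix $\begin{pmatrix}A&B\\C&D\end{pmatrix}$ and $C\neq 0$ one has $\gamma'(y)=(Cy+D)^{-2}=|C|^{-2}\,|y-\gamma^{-1}(\infty)|^{-2}$. Since $\gamma_{\overline{\mathbf a}}=\gamma_{\mathbf a}^{-1}$ has lower-left matrix entry $-C$, of the same magnitude, the derivatives $\gamma_{\mathbf a}'(y_0)$ and $\gamma_{\overline{\mathbf a}}'(y_0)$ will be comparable up to a $\Gamma$-dependent constant at any $y_0$ bounded away from both poles $\gamma_{\mathbf a}^{-1}(\infty)$ and $\gamma_{\overline{\mathbf a}}^{-1}(\infty)=\gamma_{\mathbf a}(\infty)$.

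The case $|\mathbf a|=1$ is immediate since each $|I_a|$ is controlled by $C_\Gamma$. For $\mathbf a=a_1\dots a_n$ with $n\geq 2$, I would use that $|\mathcal A|=2r\geq 4$ in the nonelementary Schottky case to pick $a_\star\in\mathcal A\setminus\{a_1,\overline{a_n}\}$, and then fix any $y_0\in I_{a_\star}$. Applying Lemma~\ref{l:tree-derivative} to the admissible word $\mathbf a a_\star$ (admissible since $a_\star\neq\overline{a_n}$) together with Lemma~\ref{l:tree-regularity} would give $\gamma_{\mathbf a}'(y_0)\asymp|I_{\mathbf a a_\star}|\asymp|I_{\mathbf a}|$ with constants depending only on $\Gamma$; applying the same reasoning to $\overline{\mathbf a}a_\star$ (admissible since $a_\star\neq a_1$) would give $\gamma_{\overline{\mathbf a}}'(y_0)\asymp|I_{\overline{\mathbf a}}|$. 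Finally, the Schottky mapping property~\eqref{e:schottky-mapping} places $\gamma_{\mathbf a}^{-1}(\infty)\in I_{\overline{a_n}}$ and $\gamma_{\mathbf a}(\infty)\in I_{a_1}$; since the three intervals $I_{a_1},I_{a_\star},I_{\overline{a_n}}$ are pairwise disjoint with separations controlled by the Schottky data, both $|y_0-\gamma_{\mathbf a}^{-1}(\infty)|$ and $|y_0-\gamma_{\mathbf a}(\infty)|$ are bounded above and below by $\Gamma$-dependent constants, so the observation from the first paragraph applies and $|I_{\mathbf a}|\asymp|I_{\overline{\mathbf a}}|$.

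The only subtle point, minor but worth flagging, is that the seemingly more direct chain-rule identity $\gamma_{\overline{\mathbf a}}'(\gamma_{\mathbf a}(y_0))=\gamma_{\mathbf a}'(y_0)^{-1}$ would evaluate $\gamma_{\overline{\mathbf a}}'$ at a point of $I_{\mathbf a a_\star}\subset I_{a_1}$, which is precisely the one disk on which Lemma~\ref{l:tree-derivative} does not directly force $\gamma_{\overline{\mathbf a}}'\asymp|I_{\overline{\mathbf a}}|$; evaluating both derivatives at the common reference point $y_0\in I_{a_\star}$ is what sidesteps this obstruction.
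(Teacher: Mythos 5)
Your proof is correct, and it takes a genuinely different route from the paper's. You compare $\gamma_{\mathbf a}'$ and $\gamma_{\overline{\mathbf a}}'$ at a common reference point $y_0$ chosen in a third interval $I_{a_\star}$ disjoint from both $I_{a_1}$ (which contains the pole $\gamma_{\mathbf a}(\infty)$) and $I_{\overline{a_n}}$ (which contains $\gamma_{\mathbf a}^{-1}(\infty)$), exploiting the explicit formula $\gamma'(y)=|C|^{-2}|y-\gamma^{-1}(\infty)|^{-2}$ and the fact that the lower-left entry is preserved up to sign under matrix inversion. This makes the $\Gamma$-dependence of the constant transparent: everything reduces to the separation and diameter bounds of the Schottky intervals, plus the derivative estimate~\eqref{e:tree-derivative-1}. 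The paper instead isolates the outer letters as $\mathbf a=a_1\mathbf b a_n$, reduces to comparing $|I_{\mathbf b a_n}|$ with $|I_{\overline{\mathbf b}\,\overline{a_1}}|$, and invokes the M\"obius invariance of the cross-ratio of four boundary points under $\gamma_{\mathbf b}$, again finishing via separation bounds for the remaining distances. Both arguments ultimately rest on the same bounded-geometry input; yours avoids cross-ratios entirely and uses~\eqref{e:tree-derivative-1} more directly, at the small cost of needing $|\mathcal A|\geq 3$ to pick $a_\star$ (automatic here since $r\geq 2$), while the paper's cross-ratio trick is a touch more self-contained but requires the explicit four-point bookkeeping. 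Your concluding remark about why the naive chain-rule identity $\gamma_{\overline{\mathbf a}}'(\gamma_{\mathbf a}(y_0))=\gamma_{\mathbf a}'(y_0)^{-1}$ does not immediately close the argument — because it lands the evaluation point in $I_{a_1}$, the one generator interval on which Lemma~\ref{l:tree-derivative} gives no control on $\gamma_{\overline{\mathbf a}}'$ — is accurate and is exactly the obstruction your choice of $y_0$ is designed to sidestep.
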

\begin{proof}
Without loss of generality, we may assume that $|\mathbf a|\geq 3$.
We write $\mathbf a=a_1\dots a_n$ and denote $\mathbf b := a_2\dots a_{n-1}$,
so that $\mathbf a=a_1\mathbf b a_n$.
Since 
$I_{\mathbf a}=\gamma_{a_1}(I_{\mathbf b a_n})$ and
$I_{\overline{\mathbf a}}=\gamma_{\overline{a_n}}(I_{\overline{\mathbf b}\overline{a_1}})$,
it suffices to show that
\begin{equation}
  \label{e:reversible-int}
C_\Gamma^{-1}|I_{\mathbf b a_n}| \leq 
|I_{\overline{\mathbf b}\overline{a_1}}|
\leq C_\Gamma|I_{\mathbf b a_n}|.
\end{equation}
Denote
$$
I_{a_n}=[x_1,x_2],\quad
I_{\overline{\mathbf b}\overline{a_1}}=[x_3,x_4],\quad
I_{\mathbf b a_n}=[y_1,y_2],\quad
I_{\overline{a_1}}=[y_3,y_4]
$$
and remark that $\gamma_{\mathbf b}(x_j)=y_j$ and thus
we have equality of cross ratios
\begin{equation}
  \label{e:reversible-int2}
{(x_2-x_1)(x_4-x_3)\over (x_4-x_1)(x_3-x_2)}
={(y_2-y_1)(y_4-y_3)\over (y_4-y_1)(y_3-y_2)}.
\end{equation}
Now, $x_3,x_4\in I_{\overline{a_{n-1}}}$ and $\overline{a_{n-1}}\neq a_n$.
Therefore,
$$
|x_2-x_1|,|x_4-x_1|,|x_3-x_2|\in [C_\Gamma^{-1},C_\Gamma].
$$
Since $y_1,y_2\in I_{a_2}$ we similarly bound $|y_4-y_3|,|y_4-y_1|,|y_3-y_2|$.
Then~\eqref{e:reversible-int} follows from~\eqref{e:reversible-int2}
and the fact that
$|I_{\mathbf b a_n}|=y_2-y_1$,
$|I_{\overline{\mathbf b}\overline{a_1}}|=x_4-x_3$.
\end{proof}
%
\begin{lemm}[Separation]
  \label{l:separation}
Assume that $\mathbf a,\mathbf b\in\mathcal W^\circ$
and $\mathbf a\not\prec\mathbf b$, $\mathbf b\not\prec\mathbf a$. Then
\begin{equation}
  \label{e:separation}
|x-y|
\geq C_\Gamma^{-1}\max\big(|I_{\mathbf a}|,|I_{\mathbf b}|\big)\quad\text{for all }
x\in I_{\mathbf a},\
y\in I_{\mathbf b}.
\end{equation}
\end{lemm}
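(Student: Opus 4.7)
The plan is to localize on the longest common prefix of $\mathbf a$ and $\mathbf b$ and then apply the mean value theorem to an appropriate M\"obius transformation. Let $\mathbf c=c_1\dots c_k\in\mathcal W$ (possibly empty) be the longest common prefix. Since neither word is a prefix of the other, we may write $\mathbf a=\mathbf c d_1\tilde{\mathbf a}$ and $\mathbf b=\mathbf c d_2\tilde{\mathbf b}$ for letters $d_1\neq d_2$ in $\mathcal A$. The case $\mathbf c=\emptyset$ is immediate: $I_{\mathbf a}\subset I_{d_1}$ and $I_{\mathbf b}\subset I_{d_2}$ are contained in disjoint intervals at distance at least $C_\Gamma^{-1}$ (finitely many pairs), while $|I_{\mathbf a}|,|I_{\mathbf b}|\leq C_\Gamma$.

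For $k\geq 1$, I would pull $x$ and $y$ back by $\gamma_{\mathbf c'}^{-1}$. Setting $\tilde x:=\gamma_{\mathbf c'}^{-1}(x)$ and $\tilde y:=\gamma_{\mathbf c'}^{-1}(y)$, the inclusions $I_{\mathbf a}\subset I_{\mathbf c d_1}=\gamma_{\mathbf c'}(I_{c_k d_1})$ and $I_{\mathbf b}\subset I_{\mathbf c d_2}=\gamma_{\mathbf c'}(I_{c_k d_2})$ give $\tilde x\in I_{c_k d_1}\subset I_{c_k}$ and $\tilde y\in I_{c_k d_2}\subset I_{c_k}$. Since the triples $(c_k,d_1,d_2)$ with $d_1\neq d_2$ range over a finite set, the distance from $I_{c_k d_1}$ to $I_{c_k d_2}$ is bounded below by $C_\Gamma^{-1}$. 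Lemma~\ref{l:tree-derivative} applied to the word $\mathbf c$ yields the uniform lower bound $\gamma'_{\mathbf c'}(w)\geq C_\Gamma^{-1}|I_{\mathbf c}|$ for all $w\in I_{c_k}$, so the mean value theorem on the segment $[\tilde x,\tilde y]\subset I_{c_k}$ gives $|x-y|\geq C_\Gamma^{-2}|I_{\mathbf c}|$. Since $I_{\mathbf a},I_{\mathbf b}\subset I_{\mathbf c}$, this implies the desired bound $|x-y|\geq C_\Gamma^{-1}\max(|I_{\mathbf a}|,|I_{\mathbf b}|)$.

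The subtle point I would be most careful about is the choice of $\gamma_{\mathbf c'}$ rather than the more natural-looking $\gamma_{\mathbf c}$. The pole $\gamma_{\mathbf c}^{-1}(\infty)$ sits in $I_{\overline{c_k}}$, which can be positioned on the real line between $I_{d_1}$ and $I_{d_2}$, so the segment used in a direct mean value argument with $\gamma_{\mathbf c}$ may cross the pole. Backing off one level places the pole of $\gamma_{\mathbf c'}$ in $I_{\overline{c_{k-1}}}$, disjoint from $I_{c_k}$ because $\overline{c_{k-1}}\neq c_k$; the segment $[\tilde x,\tilde y]$ lies inside the connected interval $I_{c_k}$ and avoids the pole, which is precisely what makes the mean value theorem applicable.
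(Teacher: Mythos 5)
Your proof is correct and follows essentially the same route as the paper: pull back by $\gamma_{\mathbf c'}$ (not $\gamma_{\mathbf c}$) into $I_{c_k}$, use the uniform separation of the children intervals $\gamma_{c_k}(I_{d_1})$, $\gamma_{c_k}(I_{d_2})$, and apply the derivative bound \eqref{e:tree-derivative-1} via the mean value theorem. Your explicit remark about why one must back off one level (keeping the pole $\gamma_{\mathbf c'}^{-1}(\infty)\in I_{\overline{c_{k-1}}}$ disjoint from $I_{c_k}$) makes precise a point the paper leaves implicit, and your separate treatment of the empty-prefix case is a reasonable expansion of the paper's ``WLOG $\mathbf c\in\mathcal W^\circ$.''
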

\begin{proof}
Since $\mathbf a\not\prec\mathbf b$, $\mathbf b\not\prec\mathbf a$, there
exist
$$
\mathbf c\in \mathcal W,\
d,e\in\mathcal A\quad\text{such that}\quad
\mathbf c\to d,\
\mathbf c\to e,\
\mathbf cd\prec\mathbf a,\
\mathbf ce\prec\mathbf b,\
d\neq e.
$$
Without loss of generality we may assume that $\mathbf c\in\mathcal W^\circ$
and write $\mathbf c=c_1\dots c_n$.
Then
$$
I_{\mathbf a}\subset I_{\mathbf cd}=\gamma_{\mathbf c'}(\gamma_{c_n}(I_d)),
\quad
I_{\mathbf b}\subset I_{\mathbf ce}=\gamma_{\mathbf c'}(\gamma_{c_n}(I_e)).
$$
Since the distance between $\gamma_{c_n}(I_d)$ and $\gamma_{c_n}(I_e)$ is bounded below
by $C_\Gamma^{-1}$
and both these intervals are contained in $I_{c_n}$,
we get by~\eqref{e:tree-derivative-1}
$$
|x-y|\geq C_\Gamma^{-1}|I_{\mathbf c}|
\geq C_\Gamma^{-1}\max\big(|I_{\mathbf a}|,|I_{\mathbf b}|\big)\quad\text{for all }
x\in I_{\mathbf a},\
y\in I_{\mathbf b}
$$
finishing the proof.
\end{proof}
We finally obtain estimates on the elements of
the partition $Z(\tau)$ defined in~\eqref{e:Z-tau}:
\begin{lemm}
For all $\tau\in (0,1]$ and $\mathbf a=a_1\dots a_n\in Z(\tau)$, we have
\begin{gather}
  \label{e:Z-tau-prop-1}
C_\Gamma^{-1}\tau \leq |I_{\mathbf a}|\leq \tau,\\
  \label{e:Z-tau-prop-1.5}
C_\Gamma^{-1}\tau \leq |I_{\overline{\mathbf a}}|\leq C_\Gamma\,\tau,\\
  \label{e:Z-tau-prop-2}
C_{\Gamma}^{-1}\tau\leq \gamma_{\mathbf a'}'\leq C_\Gamma\, \tau\quad\text{on }I_{a_n}.
\end{gather}
\end{lemm}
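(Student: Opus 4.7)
The plan is to deduce each of the three bounds directly from the definition of $Z(\tau)$ combined with the distortion lemmas already established in this subsection.

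First I would prove~\eqref{e:Z-tau-prop-1}. The upper bound $|I_{\mathbf a}|\leq \tau$ is immediate from the definition~\eqref{e:Z-tau} of $Z(\tau)$. For the lower bound, I would split into cases on $|\mathbf a|$. If $|\mathbf a|\geq 2$, then $\mathbf a'\in \mathcal W^\circ$ and $\mathbf a = \mathbf a' a_n$ with $\mathbf a'\to a_n$, so the parent-child ratio estimate~\eqref{e:tree-regularity} of Lemma~\ref{l:tree-regularity} gives $|I_{\mathbf a}|\geq C_\Gamma^{-1}|I_{\mathbf a'}|>C_\Gamma^{-1}\tau$, using the defining inequality $|I_{\mathbf a'}|>\tau$. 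If instead $|\mathbf a|=1$, then $\mathbf a=a_1$ and $|I_{\mathbf a}|=|I_{a_1}|\geq C_\Gamma^{-1}$ directly from the Schottky data; since $\tau\leq 1$, this yields $|I_{\mathbf a}|\geq C_\Gamma^{-1}\tau$ after enlarging $C_\Gamma$ if necessary.

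Next, \eqref{e:Z-tau-prop-1.5} follows by combining \eqref{e:Z-tau-prop-1} with the reversal bound~\eqref{e:reversible} of Lemma~\ref{l:reversible}: one has $C_\Gamma^{-1}|I_{\mathbf a}|\leq |I_{\overline{\mathbf a}}|\leq C_\Gamma|I_{\mathbf a}|$, and plugging in $C_\Gamma^{-1}\tau\leq |I_{\mathbf a}|\leq \tau$ gives the claim (with a worse, but still $\Gamma$-dependent, constant). Finally, \eqref{e:Z-tau-prop-2} is obtained by applying the derivative bound~\eqref{e:tree-derivative-1} of Lemma~\ref{l:tree-derivative}, which says that $C_\Gamma^{-1}|I_{\mathbf a}|\leq \gamma_{\mathbf a'}'(x)\leq C_\Gamma|I_{\mathbf a}|$ for all $x\in I_{a_n}$, and again substituting the bounds on $|I_{\mathbf a}|$ from~\eqref{e:Z-tau-prop-1}.

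There is no real obstacle here, as the statement is essentially a repackaging of the distortion machinery from~\S\ref{s:der-est-2}. The only mild subtlety is the $|\mathbf a|=1$ boundary case in~\eqref{e:Z-tau-prop-1}, where the parent word is empty and $|I_{\emptyset}|=\infty$, so one cannot directly invoke the parent-child ratio; this is handled by the trivial observation that the base intervals $I_a$ have size bounded below by a $\Gamma$-dependent constant while $\tau\leq 1$.
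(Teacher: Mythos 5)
Your proof is correct and follows exactly the same route as the paper: the upper bound in~\eqref{e:Z-tau-prop-1} is definitional, the lower bound comes from the parent-child ratio~\eqref{e:tree-regularity} together with $|I_{\mathbf a'}|>\tau$, and the other two bounds are immediate from~\eqref{e:reversible} and~\eqref{e:tree-derivative-1}. The only difference is that you spell out the $|\mathbf a|=1$ case, which the paper dismisses with a ``without loss of generality''; your treatment is a harmless (and slightly more careful) elaboration.
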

\begin{proof}
Let $\mathbf a\in Z(\tau)$. Without loss of generality we may assume that $|\mathbf a|\geq 2$.
We have $|I_{\mathbf a}|\leq \tau < |I_{\mathbf a'}|$
and by Lemma~\ref{l:tree-regularity}, $|I_{\mathbf a}|\geq C_\Gamma^{-1}|I_{\mathbf a'}|$.
This gives~\eqref{e:Z-tau-prop-1}.
Now~\eqref{e:Z-tau-prop-1.5} follows from~\eqref{e:reversible}, and
\eqref{e:Z-tau-prop-2} follows from~\eqref{e:tree-derivative-1}.
\end{proof}

\subsection{Patterson--Sullivan measure}
  \label{s:ps}

The Patterson--Sullivan measure $\mu$ is equivariant under the group $\Gamma$:
for any bounded Borel function $f$ on $\mathbb R$,
\begin{equation}
  \label{e:psin-1}
\int_{\Lambda_\Gamma} f(x)\,d\mu(x)
=\int_{\Lambda_\Gamma} f(\gamma(x))|\gamma'(x)|_{\mathbb B}^\delta \,d\mu(x)\quad\text{for all }
\gamma\in\Gamma
\end{equation}
where $|\gamma'|_{\mathbb B}$ is the derivative of $\gamma$ as a map
of the ball model of the hyperbolic space:
$$
|\gamma'(x)|_{\mathbb B}={1+x^2\over 1+\gamma(x)^2}\,\gamma'(x),\quad
x\in\dot{\mathbb R}.
$$
See for instance~\cite[Lemma~14.2]{BorthwickBook}. Next, \eqref{e:psin-1}
implies
\begin{equation}
  \label{e:psin-2}
\int_{I_{\mathbf a b}}f(x)\,d\mu(x)
=\int_{I_b} f(\gamma_{\mathbf a}(x))w_{\mathbf a}(x)\,d\mu(x)\quad\text{for all }
\mathbf a\in\mathcal W,\
b\in\mathcal A,\
\mathbf a\to b
\end{equation}
where the weight $w_{\mathbf a}$ is defined by
\begin{equation}
  \label{e:w-a}
w_{\mathbf a}(x):=|\gamma_{\mathbf a}'(x)|_{\mathbb B}^\delta.
\end{equation}
The Patterson--Sullivan measure of an interval $I_{\mathbf a}$ is estimated
by the following
\begin{lemm}
  \label{l:ad-int}
We have
\begin{equation}
  \label{e:ad-int}
C_\Gamma^{-1}|I_{\mathbf a}|^\delta
\leq \mu(I_{\mathbf a})
\leq C_\Gamma |I_{\mathbf a}|^\delta
\quad\text{for all }
\mathbf a\in\mathcal W^\circ.
\end{equation}
\end{lemm}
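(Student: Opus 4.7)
The plan is to apply the equivariance relation~\eqref{e:psin-2} with $\mathbf a$ replaced by $\mathbf a'$ and $b$ replaced by $a_n$ (writing $\mathbf a=a_1\dots a_n$, with the convention $\gamma_\emptyset=\mathrm{id}$ when $|\mathbf a|=1$), and taking $f\equiv 1$. This yields the integral representation
$$
\mu(I_{\mathbf a})=\int_{I_{a_n}} w_{\mathbf a'}(x)\,d\mu(x),
$$
which reduces the claim to a uniform pointwise estimate on the weight $w_{\mathbf a'}$ over the fixed base interval $I_{a_n}$.

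Next, I would dispense with the distinction between the ball derivative $|\gamma_{\mathbf a'}'|_{\mathbb B}$ and the ordinary derivative $\gamma_{\mathbf a'}'$: since $\Lambda_\Gamma\subset\bigcup_a I_a$ is contained in a fixed compact subset of $\mathbb R$, both $x\in I_{a_n}$ and $\gamma_{\mathbf a'}(x)\in I_{\mathbf a}$ lie in a bounded region, so the conformal factor $(1+x^2)/(1+\gamma_{\mathbf a'}(x)^2)$ is pinched between $C_\Gamma^{-1}$ and $C_\Gamma$ uniformly in $\mathbf a'$. Combining this with the derivative estimate~\eqref{e:tree-derivative-1} (raised to the $\delta$ power, which affects only constants since $\delta\in(0,1)$), I would obtain
$$
C_\Gamma^{-1}|I_{\mathbf a}|^\delta\leq w_{\mathbf a'}(x)\leq C_\Gamma|I_{\mathbf a}|^\delta\quad\text{for all }x\in I_{a_n}.
$$

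Inserting this bound into the integral representation above gives
$$
C_\Gamma^{-1}|I_{\mathbf a}|^\delta\,\mu(I_{a_n})\leq\mu(I_{\mathbf a})\leq C_\Gamma|I_{\mathbf a}|^\delta\,\mu(I_{a_n}),
$$
so~\eqref{e:ad-int} reduces to showing $\mu(I_{a_n})\in[C_\Gamma^{-1},1]$ uniformly in $a_n\in\mathcal A$. The upper bound is immediate since $\mu$ is a probability measure. The lower bound is the only step that is not purely a consequence of the distortion machinery of~\S\ref{s:der-est}--\S\ref{s:der-est-2}, and it is the mildest of obstacles: for a nonelementary convex co-compact group $\Gamma$ the Patterson--Sullivan measure charges each interval $I_a$ positively (a standard fact, since $\Lambda_\Gamma\cap I_a$ is a nonempty perfect set and $\mu$ has no atoms and full support in $\Lambda_\Gamma$), and $\mathcal A$ is finite, so the minimum $\min_a\mu(I_a)$ is a positive constant depending only on the Schottky data.
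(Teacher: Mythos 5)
Your proof follows essentially the same route as the paper: the integral representation $\mu(I_{\mathbf a})=\int_{I_{a_n}}w_{\mathbf a'}\,d\mu$ from~\eqref{e:psin-2}, the pointwise bound $C_\Gamma^{-1}|I_{\mathbf a}|^\delta\leq w_{\mathbf a'}\leq C_\Gamma|I_{\mathbf a}|^\delta$ on $I_{a_n}$ coming from~\eqref{e:tree-derivative-1}, and the reduction to the base case $\mu(I_a)\in[C_\Gamma^{-1},1]$ for $a\in\mathcal A$. You are right (and more explicit than the paper) that one must also control the conformal factor $(1+x^2)/(1+\gamma_{\mathbf a'}(x)^2)$, which is harmless since everything takes place in a fixed compact subset of $\mathbb R$.

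The one genuine divergence is the base case. You invoke the (true, but external) fact that the Patterson--Sullivan measure has full support in $\Lambda_\Gamma$ with no atoms, then take a minimum over the finite set $\mathcal A$. The paper instead derives $\mu(I_a)\geq C_\Gamma^{-1}$ entirely from the machinery at hand: applying~\eqref{e:psin-2} with a one-letter word gives $\mu(I_a)\geq\mu(I_{ab})=\int_{I_b}w_a\,d\mu\geq C_\Gamma^{-1}\mu(I_b)$ for all $b\neq\overline a$, and then (using $r\geq 2$ to chain through an intermediate letter if necessary, together with $\sum_b\mu(I_b)=1$) one gets the uniform lower bound with an explicit constant. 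This keeps the section self-contained, which matters since the paper advertises Lemma~\ref{l:ad-regularity} as a self-contained proof of Ahlfors--David regularity and Lemma~\ref{l:ad-int} is its main input. Your version is correct but loses that self-containedness at precisely the step where it would have been cheapest to keep it.
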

\begin{proof}
The formula~\eqref{e:psin-2} implies that for all $a,b\in\mathcal A$,
$a\neq \overline b$, we have
$$
\mu(I_a)\geq \mu(I_{ab})=\int_{I_b}w_{a}(x)\,d\mu(x)\geq C_\Gamma^{-1}\mu(I_b).
$$
Since $\mu$ is a probability measure, this implies that
$$
C_\Gamma^{-1}\leq \mu(I_a)\leq 1\quad\text{for all }a\in\mathcal A.
$$
Denote $\mathbf a=a_1\dots a_n$.
From~\eqref{e:psin-2} we have
$$
\mu(I_{\mathbf a})=\int_{I_{a_n}}w_{\mathbf a'}(x)\,d\mu(x).
$$
By~\eqref{e:tree-derivative-1} we have
$$
C_\Gamma^{-1}|I_{\mathbf a}|^\delta
\leq  w_{\mathbf a'}
\leq C_\Gamma |I_{\mathbf a}|^\delta\quad\text{on }I_{a_n}
$$
and~\eqref{e:ad-int} follows.
\end{proof}
Using Lemma~\ref{l:ad-int}, we give a self-contained
proof of Ahlfors--David regularity of $\mu$ (see~\cite[Lemma~14.13]{BorthwickBook}
for another proof):
\begin{lemm}
  \label{l:ad-regularity}
Let $I\subset\mathbb R$ be an interval.
Then
\begin{equation}
  \label{e:adreg-1}
\mu(I)\leq C_\Gamma |I|^\delta.
\end{equation}
If additionally $|I|\leq 1$ and $I$ is centered at a point in $\Lambda_\Gamma$, then
\begin{equation}
  \label{e:adreg-2}
\mu(I)\geq C_\Gamma^{-1} |I|^\delta.
\end{equation}
\end{lemm}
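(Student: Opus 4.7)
The plan is to derive both bounds from Lemma~\ref{l:ad-int} by comparing the interval $I$ with appropriately chosen elements of the tree of Schottky intervals $I_{\mathbf a}$. Specifically, I would use the partition $Z(\tau)$ from~\eqref{e:Z-tau} at scale $\tau\sim |I|$ for the upper bound, and a single well-chosen $I_{\mathbf a}\subset I$ for the lower bound.

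\textbf{Upper bound.} I would first dispense with the trivial case $|I|\geq 1$: since $\mu$ is a probability measure, $\mu(I)\leq 1\leq |I|^\delta$. So assume $|I|\leq 1$ and set $\tau:=|I|$. Since $Z(\tau)$ is a partition of $\Lambda_\Gamma$ into the intervals $\{I_{\mathbf a}\}_{\mathbf a\in Z(\tau)}$, we have
\begin{equation*}
\mu(I)\leq \sum_{\mathbf a\in Z(\tau),\ I_{\mathbf a}\cap I\neq\emptyset}\mu(I_{\mathbf a})
\leq C_\Gamma\,\tau^\delta\cdot \#\{\mathbf a\in Z(\tau)\colon I_{\mathbf a}\cap I\neq\emptyset\},
\end{equation*}
using~\eqref{e:Z-tau-prop-1} and~\eqref{e:ad-int}. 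It remains to bound the cardinality by $C_\Gamma$. Each such $I_{\mathbf a}$ has size $\leq\tau$ and meets $I$, hence is contained in an interval of length $3\tau$. By the separation lemma (Lemma~\ref{l:separation}) applied to distinct $\mathbf a,\mathbf b\in Z(\tau)$ (which cannot be prefixes of one another), together with the lower bound $|I_{\mathbf a}|\geq C_\Gamma^{-1}\tau$ from~\eqref{e:Z-tau-prop-1}, the intervals $I_{\mathbf a}$ are pairwise at distance $\geq C_\Gamma^{-1}\tau$. A volume-packing argument then caps their number by $C_\Gamma$.

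\textbf{Lower bound.} Now assume $|I|\leq 1$ and let $x_0\in\Lambda_\Gamma$ be the center of $I$. Using the nested structure~\eqref{e:limit-int}, there is a unique infinite chain $a_1a_2\dots\in\mathcal A^{\mathbb N}$ with $x_0\in I_{a_1\dots a_n}$ for all $n$, and the sizes $|I_{a_1\dots a_n}|$ tend to~$0$ by~\eqref{e:eventually-contracting-3}. I would select $\mathbf a=a_1\dots a_n$ on this chain to be the shortest word with $|I_{\mathbf a}|\leq |I|/2$ (using the convention $|I_\emptyset|=\infty$, so $|I_{\mathbf a'}|>|I|/2$ is automatic). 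Since $x_0\in I_{\mathbf a}$ and $|I_{\mathbf a}|\leq |I|/2$, we have $I_{\mathbf a}\subset I$. For the reverse size comparison, if $|\mathbf a|\geq 2$ Lemma~\ref{l:tree-regularity} gives $|I_{\mathbf a}|\geq C_\Gamma^{-1}|I_{\mathbf a'}|>C_\Gamma^{-1}|I|/2$, while if $\mathbf a$ is a single letter then $|I_{\mathbf a}|\geq \min_{a\in\mathcal A}|I_a|\geq C_\Gamma^{-1}\geq C_\Gamma^{-1}|I|$ since $|I|\leq 1$. In either case $|I_{\mathbf a}|\geq C_\Gamma^{-1}|I|$, and then~\eqref{e:ad-int} yields
\begin{equation*}
\mu(I)\geq \mu(I_{\mathbf a})\geq C_\Gamma^{-1}|I_{\mathbf a}|^\delta\geq C_\Gamma^{-1}|I|^\delta.
\end{equation*}

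\textbf{Main obstacle.} No step is genuinely hard, since the heavy lifting has been done in the preceding lemmas. The only point requiring a little care is the counting step in the upper bound: one must use both the fact that distinct cells of $Z(\tau)$ are not prefixes of each other (to invoke Lemma~\ref{l:separation}) and the two-sided control $|I_{\mathbf a}|\asymp\tau$ from~\eqref{e:Z-tau-prop-1}. The lower bound is largely bookkeeping, with the only subtlety being to handle $\mathbf a'=\emptyset$ separately so as not to misapply Lemma~\ref{l:tree-regularity}.
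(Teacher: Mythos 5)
Your proposal is correct. For the lower bound you reproduce the paper's argument in all essentials: both you and the paper locate a word $\mathbf a$ on the chain through the center $x_0$ with $I_{\mathbf a}\subset I$ and $|I_{\mathbf a}|\geq C_\Gamma^{-1}|I|$, then apply Lemma~\ref{l:ad-int}. The only cosmetic difference is the selection rule: the paper takes the shortest $\mathbf a$ with $x_0\in I_{\mathbf a}\subset I$ and then observes that $I_{\mathbf a'}\not\subset I$ forces $|I_{\mathbf a'}|\geq{1\over 2}|I|$, whereas you impose $|I_{\mathbf a}|\leq|I|/2$ directly, which both guarantees $I_{\mathbf a}\subset I$ and gives the same bound on $|I_{\mathbf a'}|$.

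For the upper bound your route is genuinely different. The paper shrinks $I$ so that its endpoints lie in $\Lambda_\Gamma$, disposes of the degenerate one-point case (showing $\mu$ has no atoms), takes the \emph{longest} word $\mathbf a$ with $I\subset I_{\mathbf a}$, and applies Lemma~\ref{l:separation} to two distinct children of $\mathbf a$ to deduce $|I|\geq C_\Gamma^{-1}|I_{\mathbf a}|$; then $\mu(I)\leq\mu(I_{\mathbf a})$ and Lemma~\ref{l:ad-int} finish. You instead pass to the partition $Z(\tau)$ at scale $\tau=|I|$ and bound $\mu(I)$ by the number of cells meeting $I$ times the per-cell measure $\asymp\tau^\delta$, capping the count by a packing argument that combines Lemma~\ref{l:separation} with the two-sided bound $|I_{\mathbf a}|\asymp\tau$ from~\eqref{e:Z-tau-prop-1}. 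Both are sound and of comparable difficulty. The paper's version is a bit more economical (one containing word, one application of separation, no cardinality estimate) and it handles the degenerate case $|I|=0$ explicitly, which your argument with $\tau=|I|$ does not directly cover---though that case follows at once by taking $|I|\to 0$ in your bound. Your approach has the mild advantage of skipping the preliminary normalization of $I$ and is structurally parallel to the counting lemmas (Lemmas~\ref{l:partition-count}--\ref{l:partition-count-2}) elsewhere in the section.
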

\begin{proof}
We first show the upper bound~\eqref{e:adreg-1}.
Since $\mu$ is supported on $\Lambda_\Gamma$,
replacing $I$ with the intersections $I\cap I_a$
we may assume that $I\subset I_a$ for some $a\in\mathcal A$.
Shrinking $I$ without changing $\mu(I)$, we may also assume that its endpoints $x_0,x_1$ lie in $\Lambda_\Gamma$.
If $I=\{x_0\}$ consists of one point, then by~\eqref{e:limit-int} we can find arbitrarily
long words $\mathbf a$ such that $x_0\in I_{\mathbf a}$;
by~\eqref{e:eventually-contracting-3} and~\eqref{e:ad-int},
we have $\mu(I)=0$.

Assume now that $x_0<x_1$. By~\eqref{e:eventually-contracting-3}
there exists the longest word $\mathbf a=a_1\dots a_n\in\mathcal W^\circ$ such that
$I\subset I_{\mathbf a}$. Then $x_0\in I_{\mathbf a b}$,
$x_1\in I_{\mathbf a c}$ for two different $b,c\in\mathcal A$
such that $\mathbf a\to b$, $\mathbf a\to c$.
By Lemma~\ref{l:separation}, the distance between $I_{\mathbf ab}$
and $I_{\mathbf ac}$ is
bounded below by $C_\Gamma^{-1}|I_{\mathbf a}|$, therefore
$|I|\geq C_\Gamma^{-1}|I_{\mathbf a}|$. Now~\eqref{e:adreg-1} follows from~\eqref{e:ad-int}:
$$
\mu(I)\leq \mu(I_{\mathbf a})\leq C_\Gamma |I_{\mathbf a}|^\delta
\leq C_\Gamma |I|^\delta.
$$
We next show the lower bound~\eqref{e:adreg-2}
where $I$ is an interval of size $0<|I|\leq 1$ centered at
some $x\in \Lambda_\Gamma$. Using~\eqref{e:eventually-contracting-3}, take the shortest word $\mathbf a\in\mathcal W^\circ$
such that $x\in I_{\mathbf a}\subset I$. If $|\mathbf a|=1$, then
by~\eqref{e:ad-int} $\mu(I)\geq \mu(I_{\mathbf a})\geq C_\Gamma^{-1}$.
Assume now that $|\mathbf a|\geq 2$.

Since $x\in I_{\mathbf a'}$ and $I_{\mathbf a'}\not\subset I$,
we have $|I_{\mathbf a'}|\geq {1\over 2}|I|$ and thus
by~\eqref{e:tree-regularity}
$|I_{\mathbf a}|\geq C_\Gamma^{-1}|I|$. Now~\eqref{e:adreg-2} follows
from~\eqref{e:ad-int}:
$$
\mu(I)\geq \mu(I_{\mathbf a})\geq C_\Gamma^{-1}|I_{\mathbf a}|^\delta
\geq C_\Gamma^{-1}|I|^\delta.\qedhere
$$
\end{proof}
As another corollary of Lemma~\ref{l:ad-int}, we estimate the number of elements
in the partition $Z(\tau)$ defined in~\eqref{e:Z-tau}:
\begin{lemm}
  \label{l:partition-count}
For $\tau\in (0,1]$ we have
\begin{equation}
  \label{e:partition-count}
C_\Gamma^{-1}\tau^{-\delta}\leq \#(Z(\tau))\leq C_\Gamma\,\tau^{-\delta}.
\end{equation}
\end{lemm}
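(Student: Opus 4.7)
The plan is to combine the partition property of $Z(\tau)$ with the Ahlfors--David-type bound for $\mu$ on the intervals $I_{\mathbf a}$. Concretely, since $Z(\tau)$ is a partition, the characterization~\eqref{e:partition-intervals} tells us that $\Lambda_\Gamma = \bigsqcup_{\mathbf a \in Z(\tau)} (I_{\mathbf a} \cap \Lambda_\Gamma)$. Because $\mu$ is a probability measure supported on $\Lambda_\Gamma$, summing over this disjoint cover gives
$$
1 = \mu(\Lambda_\Gamma) = \sum_{\mathbf a \in Z(\tau)} \mu(I_{\mathbf a}).
$$

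Next I would control each summand uniformly in $\mathbf a$. For $\mathbf a \in Z(\tau)$, the definition~\eqref{e:Z-tau} together with the parent-child estimate gives $|I_{\mathbf a}| \in [C_\Gamma^{-1}\tau,\tau]$, as recorded in~\eqref{e:Z-tau-prop-1}. Plugging this into Lemma~\ref{l:ad-int} yields
$$
C_\Gamma^{-1}\tau^\delta \leq \mu(I_{\mathbf a}) \leq C_\Gamma\,\tau^\delta
$$
for every $\mathbf a \in Z(\tau)$, with the constants depending only on the Schottky data. Substituting these two-sided bounds into the identity $\sum_{\mathbf a\in Z(\tau)} \mu(I_{\mathbf a}) = 1$ gives
$$
C_\Gamma^{-1}\,\#(Z(\tau))\,\tau^\delta \;\leq\; 1 \;\leq\; C_\Gamma\,\#(Z(\tau))\,\tau^\delta,
$$
which is exactly~\eqref{e:partition-count} after absorbing the constants.

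There is no real obstacle here: every ingredient is already available, and the argument is a one-line counting/measure-theoretic computation. The only point worth checking is that $Z(\tau)$ is nonempty and finite for $\tau\in(0,1]$, but this is immediate from~\eqref{e:eventually-contracting-3} (the sizes $|I_{\mathbf a}|$ decay exponentially in $|\mathbf a|$, so for each $\mathbf a$ with $|\mathbf a|$ large enough there is a unique prefix in $Z(\tau)$, giving finiteness, while the existence of elements of every length in $\mathcal W$ ensures nonemptiness).
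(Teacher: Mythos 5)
Your proof is correct and follows exactly the paper's argument: use the partition identity $1=\sum_{\mathbf a\in Z(\tau)}\mu(I_{\mathbf a})$, then combine~\eqref{e:Z-tau-prop-1} with Lemma~\ref{l:ad-int} to get the uniform two-sided bound $\mu(I_{\mathbf a})\asymp\tau^\delta$.
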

\begin{proof}
Since $Z(\tau)$ is a partition, we have by~\eqref{e:partition-intervals}
$$
1=\mu(\Lambda_\Gamma)=\sum_{\mathbf a\in Z(\tau)}\mu(I_{\mathbf a}).
$$
By~\eqref{e:Z-tau-prop-1} and~\eqref{e:ad-int}, we have for all $\mathbf a\in Z(\tau)$
\begin{equation}
  \label{e:pc-int}
C_\Gamma^{-1}\tau^\delta\leq \mu(I_{\mathbf a})\leq C_\Gamma\,\tau^\delta
\end{equation}
which implies~\eqref{e:partition-count}.
%
%
\end{proof}
The following is an analogue of the upper bound of Lemma~\ref{l:ad-int} where
instead of the measure $\mu(I_{\mathbf b})$ we estimate
the number of intervals of length at least $\tau$
in the subtree with root $I_{\mathbf b}$:
\begin{lemm}
  \label{l:partition-count-1.5}
Assume that $\tau\in (0,1]$, $\mathbf b\in\mathcal W^\circ$.
Then
\begin{equation}
  \label{e:pc-int-2}
\#\{\mathbf a\in\mathcal W^\circ\colon
\mathbf b\prec\mathbf a,\
|I_{\mathbf a}|\geq\tau\}\leq C_\Gamma\, \tau^{-\delta}|I_{\mathbf b}|^\delta.
\end{equation}
\end{lemm}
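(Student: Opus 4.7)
The plan is to bound $\#S$, where $S:=\{\mathbf a\in\mathcal W^\circ\colon \mathbf b\prec\mathbf a,\ |I_{\mathbf a}|\geq\tau\}$, by trading counting for Patterson--Sullivan mass via the Ahlfors--David lower bound from Lemma~\ref{l:ad-int}, swapping sum and integral, and then reducing the problem to a per-point geometric series along the nested chain of prefixes of a point's symbolic address. If $|I_{\mathbf b}|<\tau$ then $S=\emptyset$ by monotonicity of $|I_{\mathbf c}|$ along extensions of $\mathbf c$, so I would assume $|I_{\mathbf b}|\geq\tau$ throughout.

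Using the bound $1\leq C_\Gamma\mu(I_{\mathbf a})|I_{\mathbf a}|^{-\delta}$ from~\eqref{e:ad-int} and Fubini, I would write
$$
\#S\;\leq\;C_\Gamma\sum_{\mathbf a\in S}\frac{\mu(I_{\mathbf a})}{|I_{\mathbf a}|^\delta}
\;=\;C_\Gamma\int_{\Lambda_\Gamma\cap I_{\mathbf b}}F(x)\,d\mu(x),\qquad
F(x)\;:=\;\sum_{\mathbf a\in S,\,x\in I_{\mathbf a}}|I_{\mathbf a}|^{-\delta}.
$$
For each $x\in\Lambda_\Gamma\cap I_{\mathbf b}$ the words $\mathbf a\in S$ containing $x$ form an initial segment $\mathbf b=\mathbf a_0\prec\mathbf a_1\prec\cdots\prec\mathbf a_K$ of the prefixes of the symbolic address of $x$, truncated at the first depth where $|I|$ falls below $\tau$. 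Applying~\eqref{e:eventually-contracting-2} to the pair $\mathbf a_i\prec\mathbf a_K$ gives $|I_{\mathbf a_K}|\leq(1-C_\Gamma^{-1})^{K-i}|I_{\mathbf a_i}|$; with $\lambda:=(1-C_\Gamma^{-1})^\delta\in(0,1)$ and using $|I_{\mathbf a_K}|\geq\tau$, this yields
$$
F(x)\;\leq\;|I_{\mathbf a_K}|^{-\delta}\sum_{j=0}^{\infty}\lambda^{j}\;\leq\;\frac{1}{1-\lambda}\,\tau^{-\delta}.
$$

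Combining the above with the AD upper bound $\mu(I_{\mathbf b})\leq C_\Gamma|I_{\mathbf b}|^\delta$ from~\eqref{e:ad-int} produces $\#S\leq C_\Gamma\tau^{-\delta}|I_{\mathbf b}|^\delta$ after absorbing the $\Gamma$- and $\delta$-dependent factor $1/(1-\lambda)$ into $C_\Gamma$. The only subtle point is why no $\log(|I_{\mathbf b}|/\tau)$ factor appears: a naive level-by-level count using disjointness of the $I_{\mathbf a}$'s at each fixed word length $|\mathbf a|=n$ gives the correct bound $C_\Gamma\tau^{-\delta}|I_{\mathbf b}|^\delta$ per level but picks up a logarithmic loss when summed over the $O(\log(|I_{\mathbf b}|/\tau))$ relevant levels. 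The geometric-series step above dodges this because along any chain of prefixes the inverse sizes $|I_{\mathbf a_i}|^{-\delta}$ form a geometric progression dominated by the term at the deepest interval, so each fiber contributes at most $O(\tau^{-\delta})$ to~$F$.
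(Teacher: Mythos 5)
Your proof is correct, and it follows a genuinely different route from the paper's. The paper first bounds the set $B$ of leaves (words $\mathbf a$ with $\mathbf b\prec\mathbf a$, $|I_{\mathbf a}|<\tau\leq|I_{\mathbf a'}|$) using disjointness of $\{I_{\mathbf a}\}_{\mathbf a\in B}$ inside $I_{\mathbf b}$ together with both Ahlfors--David bounds, and then counts $A:=S$ by a tree-combinatorics argument: since every word in $A$ has exactly $2r-1$ children in $A\sqcup B$, the edge count forces $\#(A)=(\#(B)-1)/(2r-2)\leq\#(B)$. You instead push the whole count through Fubini and resolve the resulting fiberwise sum $F(x)$ as a geometric series using only the uniform contraction~\eqref{e:eventually-contracting-2} and the lower AD bound~\eqref{e:ad-int}. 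The paper's argument is shorter but leans on the exact branching number $2r-1$ of the Schottky tree; yours is more robust in that it never uses the branching being constant, only the exponential decay of interval sizes along chains, which would survive in a setting with variable branching. Both correctly avoid the spurious logarithmic loss for the same underlying reason (the contribution along a root-to-leaf chain is dominated by its deepest term), and the constant you absorb, $1/(1-(1-C_\Gamma^{-1})^\delta)$, indeed depends only on $\Gamma$ since $\delta$ does.
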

\begin{proof}
We may assume that $|I_{\mathbf b}|\geq\tau$ since
otherwise the left-hand side of~\eqref{e:pc-int-2} equals~0.
By~\eqref{e:eventually-contracting-3}, the following sets are finite:
$$
A:=\{\mathbf a\in\mathcal W^\circ\colon \mathbf b\prec\mathbf a,\
|I_{\mathbf a}|\geq\tau\},\quad
B:=\{\mathbf a\in\mathcal W^\circ\colon
\mathbf b\prec\mathbf a,\
|I_{\mathbf a}|<\tau\leq |I_{\mathbf a'}|\}.
$$
Then $\{I_{\mathbf a}\}_{\mathbf a\in B}$ is a disjoint collection
of subintervals of $I_{\mathbf b}$. Therefore by~\eqref{e:ad-int}
$$
\sum_{\mathbf a\in B} \mu(I_{\mathbf a})\leq \mu(I_{\mathbf b})
\leq C_\Gamma |I_{\mathbf b}|^\delta.
$$
On the other hand, by~\eqref{e:tree-regularity} and~\eqref{e:ad-int}
$$
\mu(I_{\mathbf a})\geq C_\Gamma^{-1}|I_{\mathbf a}|^\delta
\geq C_\Gamma^{-1}\tau^\delta\quad\text{for all }\mathbf a\in B.
$$
Therefore, the number of elements in $B$ is bounded as follows:
\begin{equation}
  \label{e:pc-int-3}
\#(B)\leq C_\Gamma\, \tau^{-\delta}|I_{\mathbf b}|^\delta.
\end{equation}
Next, $A\sqcup B$ forms a tree with root $\mathbf b$,
where the parent of $\mathbf a$ is given by $\mathbf a'$.
Moreover, $B$ is the set of leaves of this tree
and each element of $A$ has exactly $2r-1$ children,
where $2r\geq 4$ is the number of intervals in the Schottky structure.
The number of edges of the tree is equal to
both $\#(A)+\#(B)-1$ and $(2r-1)\cdot \#(A)$, which implies
$$
\#(A)={\#(B)-1\over 2r-2}\leq \#(B).
$$
Combining this with~\eqref{e:pc-int-3}, we obtain~\eqref{e:pc-int-2}.
\end{proof}
Arguing similarly to the proof of~\eqref{e:adreg-1},
we obtain from Lemma~\ref{l:partition-count-1.5} the following
\begin{lemm}
  \label{l:partition-count-2}
For all intervals $J$ and all $C_0\geq 2$ we have
\begin{equation}
  \label{e:partition-count-2}
\#\big\{\mathbf a\in \mathcal W^\circ \colon \tau\leq |I_{\mathbf a}|\leq C_0\tau,\
I_{\mathbf a}\cap J\neq\emptyset\big\}
\leq C_\Gamma\, \tau^{-\delta} |J|^\delta+C_\Gamma\log C_0.
\end{equation} 
\end{lemm}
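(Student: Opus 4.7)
The plan is to mimic the dichotomy used in the proof of~\eqref{e:adreg-1}, splitting the family of words according to whether the associated interval $I_{\mathbf a}$ sits entirely inside $J$ or crosses one of its endpoints. I would write the set to be counted as the disjoint union $A_1\sqcup A_2$ where
\begin{align*}
A_1 &:= \{\mathbf a\in\mathcal W^\circ : \tau\leq |I_{\mathbf a}|\leq C_0\tau,\ I_{\mathbf a}\subset J\},\\
A_2 &:= \{\mathbf a\in\mathcal W^\circ : \tau\leq |I_{\mathbf a}|\leq C_0\tau,\ I_{\mathbf a}\cap J\neq\emptyset,\ I_{\mathbf a}\not\subset J\};
\end{align*}
these two families will be responsible for the two terms on the right-hand side of~\eqref{e:partition-count-2}.

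For the boundary part $A_2$, each $I_{\mathbf a}$ is connected and meets $J$ without being contained in it, so it contains at least one of the two endpoints $p$ of $J$. For fixed $p$, Lemma~\ref{l:separation} shows that $\{\mathbf a\in\mathcal W^\circ:p\in I_{\mathbf a}\}$ is totally ordered under $\prec$ (incomparable words have disjoint intervals), hence forms a chain in the word tree. Along such a chain \eqref{e:eventually-contracting-2} forces $|I_{\mathbf a}|$ to decay geometrically with ratio at most $1-C_\Gamma^{-1}$, so the number of chain elements with $|I_{\mathbf a}|\in[\tau,C_0\tau]$ is at most $C_\Gamma\log C_0$. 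Summing over the at most two endpoints of $J$ yields $\#A_2\leq C_\Gamma\log C_0$.

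For the interior part $A_1$, I would pass to the set $A_1^{\min}\subset A_1$ of minimal elements under $\prec$. Distinct elements of $A_1^{\min}$ are $\prec$-incomparable, so by Lemma~\ref{l:separation} their intervals are pairwise disjoint subsets of $J$. Combining the lower bound $\mu(I_{\mathbf a_0})\geq C_\Gamma^{-1}|I_{\mathbf a_0}|^\delta$ from~\eqref{e:ad-int} with the upper bound $\mu(J)\leq C_\Gamma|J|^\delta$ from~\eqref{e:adreg-1} gives
$$
\sum_{\mathbf a_0\in A_1^{\min}}|I_{\mathbf a_0}|^\delta\ \leq\ C_\Gamma\,|J|^\delta.
$$
Every $\mathbf a\in A_1$ has a unique minimal $\prec$-ancestor $\mathbf a_0\in A_1^{\min}$, and Lemma~\ref{l:partition-count-1.5} bounds the number of descendants $\mathbf a$ of $\mathbf a_0$ with $|I_{\mathbf a}|\geq\tau$ by $C_\Gamma\tau^{-\delta}|I_{\mathbf a_0}|^\delta$. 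Summing over $A_1^{\min}$ and invoking the displayed inequality produces $\#A_1\leq C_\Gamma\tau^{-\delta}|J|^\delta$, which combined with the bound on $\#A_2$ gives~\eqref{e:partition-count-2}.

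I do not anticipate a real obstacle; the only subtle bookkeeping is that the dichotomy degenerates in edge cases (for instance if $|J|<\tau$ then $A_1=\emptyset$, and if $J$ is a single point only $A_2$ contributes), but these cases are still controlled by the chain count and therefore cause no difficulty.
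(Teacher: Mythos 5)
Your proof is correct, but it takes a genuinely different route from the paper's. The paper works directly with the tree $X=\{\mathbf a: |I_{\mathbf a}|\geq\tau,\ I_{\mathbf a}\cap J\neq\emptyset\}$, locates the longest word $\mathbf b\in X$ comparable to every element of $X$ (the ``last common ancestor'' before the tree branches), observes via Lemma~\ref{l:separation} that if $\mathbf b$ has at least two children in $X$ then $|I_{\mathbf b}|\leq C_\Gamma|J|$, and then applies Lemma~\ref{l:partition-count-1.5} once to the subtree rooted at $\mathbf b$, with the ancestors of $\mathbf b$ contributing the $\log C_0$ term. You instead decompose geometrically into intervals $I_{\mathbf a}\subset J$ and intervals crossing an endpoint of $J$; the crossing family you handle by two chain counts (same mechanism as the paper's $\log C_0$ term, but indexed by endpoints rather than by ancestors of the branching point), and the interior family by passing to $\prec$-minimal elements, applying Lemma~\ref{l:partition-count-1.5} once per minimal element, and closing with a packing bound $\sum_{\mathbf a_0}|I_{\mathbf a_0}|^\delta\leq C_\Gamma|J|^\delta$ derived from~\eqref{e:ad-int} and~\eqref{e:adreg-1}. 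What you gain is that no structural fact about the branching tree is needed and the reduction to $J\subset I_a$ is unnecessary; what the paper's argument gains is economy (one application of Lemma~\ref{l:partition-count-1.5}, no direct appeal to Ahlfors--David regularity, which your packing step does require). Both are sound.

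One small point worth recording if you write this up: after bounding the chain by geometric decay of sizes via~\eqref{e:eventually-contracting-2}, the count of chain elements with $|I_{\mathbf a}|\in[\tau,C_0\tau]$ is at most $C_\Gamma\log C_0+1$; the hypothesis $C_0\geq 2$ is what lets you absorb the $+1$ into $C_\Gamma\log C_0$, and you should say so.
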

\begin{proof}
Without loss
of generality we may assume that $J$ is contained in $I_a$ for some $a\in\mathcal A$.
Consider the finite set
$$
X:=\{\mathbf a\in\mathcal W^\circ\colon |I_{\mathbf a}|\geq \tau,\
I_{\mathbf a}\cap J\neq\emptyset\}.
$$
Then $X$ forms a tree with root $a$ in the sense that $\mathbf a\in X\setminus\{a\}$ implies
$\mathbf a'\in X$.

Take the longest word $\mathbf b\in X$ with the following
property: for each $\mathbf a\in X$, we have $\mathbf a\prec\mathbf b$ or $\mathbf b\prec\mathbf a$.
Then $\mathbf b$ cannot have exactly one child in $X$, leaving the following two options:
\begin{enumerate}
\item $\mathbf b$ has no children in $X$. Then all $\mathbf a\in X$ satisfy
$\mathbf a\prec\mathbf b$.
By~\eqref{e:eventually-contracting-2}, we estimate the
number of elements $\mathbf a\in X$ such that $|I_{\mathbf a}|\leq C_0\tau$
by $C_\Gamma\log C_0$.
\item There exist $c,d\in\mathcal A$, $c\neq d$, $\mathbf b\to c$,
$\mathbf b\to d$, such that $\mathbf bc,\mathbf bd\in X$.
By Lemma~\ref{l:separation} the distance between $I_{\mathbf bc}$ and $I_{\mathbf bd}$
is bounded below by $C_\Gamma^{-1}|I_{\mathbf b}|$, and both these intervals intersect
$J$, therefore
$$
|I_{\mathbf b}|\leq C_\Gamma |J|.
$$
By~\eqref{e:pc-int-2}, the number of elements $\mathbf a\in X$
such that $\mathbf b\prec\mathbf a$ is bounded above by $C_\Gamma\,\tau^{-\delta}|J|^\delta$.
All other elements $\mathbf a\in X$ have to satisfy $\mathbf a\prec\mathbf b$, 
and arguing similarly to the previous case we see that the number of these
with $|I_{\mathbf a}|\leq C_0\tau$ is bounded above by $C_\Gamma\log C_0$.\qedhere
\end{enumerate}
\end{proof}
We finally use Lemma~\ref{l:distortion-discrepancy}
to obtain the following statement, which gives
the positive box dimension estimate required in~\S\ref{s:fourier-2}.
This is the only statement which uses
both Lemma~\ref{l:reversible} (via~\eqref{e:Z-tau-prop-1.5})
and the full power of Lemma~\ref{l:partition-count-2}.
Recall the notation $\mathbf a\rightsquigarrow \mathbf b$
from~\S\ref{s:schottky}. We introduce the following additional
piece of notation:
\begin{equation}
  \label{e:quads}
\mathbf a\rightsquigarrow {\textstyle{\mathbf b\atop \mathbf c}}\rightsquigarrow \mathbf d\quad\text{if and only if}\quad
\mathbf a\rightsquigarrow\mathbf b\rightsquigarrow\mathbf d\text{ and }
\mathbf a\rightsquigarrow\mathbf c\rightsquigarrow\mathbf d.
\end{equation}
\begin{lemm}
  \label{l:triple-count}
Fix $\mathbf a\in Z(\tau)$
and for each $\mathbf d\in\mathcal W^\circ$ let $x_{\mathbf d}$
be the center of $I_{\mathbf d}$.
Then we have for $0<\tau\leq\sigma\leq 1$
\begin{equation}
  \label{e:triple-count}
\begin{gathered}
\#\big\{
(\mathbf b,\mathbf c,\mathbf d)\in Z(\tau)^3
\colon
\mathbf a\rightsquigarrow {\textstyle{\mathbf b\atop \mathbf c}}\rightsquigarrow \mathbf d,\
|\gamma'_{\mathbf a'\mathbf b'}(x_{\mathbf d})-\gamma'_{\mathbf a'\mathbf c'}(x_{\mathbf d})|
\leq \tau^2\sigma
\big
\}
\\
\leq C_\Gamma\, \tau^{-3\delta}\sigma^{\delta/2}.
\end{gathered}\end{equation}  
\end{lemm}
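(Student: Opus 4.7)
The plan is to count the triples for each choice of common letter $e\in\mathcal A$ playing the role of $b_{|\mathbf b|}=c_{|\mathbf c|}=d_1$ and then absorb the resulting factor $|\mathcal A|$ into $C_\Gamma$. I first fix such $e$, set $I:=I_e$, and assume $\sigma$ small (otherwise the trivial bound $\#Z(\tau)^3\leq C_\Gamma\tau^{-3\delta}$ is already stronger than the claim). For each admissible pair I introduce the distortion factors $\alpha_{\mathbf b}:=\alpha(\gamma_{\mathbf a'\mathbf b'},I)$ and $\alpha_{\mathbf c}:=\alpha(\gamma_{\mathbf a'\mathbf c'},I)$, each bounded by $C_\Gamma$ via Lemma~\ref{l:alpha-bound}. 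Since Lemma~\ref{l:tree-derivative} and~\eqref{e:Z-tau-prop-2} make both $\gamma'_{\mathbf a'\mathbf b'}$ and $\gamma'_{\mathbf a'\mathbf c'}$ of order $\tau^2$ on $I$, the hypothesis at $x_{\mathbf d}$ becomes the log-ratio bound $|\log(\gamma'_{\mathbf a'\mathbf b'}/\gamma'_{\mathbf a'\mathbf c'})(x_{\mathbf d})|\leq C_\Gamma\sigma$. Lemma~\ref{l:distortion-discrepancy} then traps $x_{\mathbf d}$ in an interval $J_{\mathbf b,\mathbf c}\subset I$ with $|J_{\mathbf b,\mathbf c}|\leq C_\Gamma\sigma/|\alpha_{\mathbf b}-\alpha_{\mathbf c}|$, and Lemma~\ref{l:partition-count-2} gives $\#\{\mathbf d:x_{\mathbf d}\in J_{\mathbf b,\mathbf c}\}\leq C_\Gamma(\tau^{-\delta}|J_{\mathbf b,\mathbf c}|^\delta+1)$.

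I then split pairs by comparing $|\alpha_{\mathbf b}-\alpha_{\mathbf c}|$ with the threshold $\sqrt\sigma$. In Case A, $|J_{\mathbf b,\mathbf c}|\leq C_\Gamma\sqrt\sigma$ and the resulting $\mathbf d$-count is $\leq C_\Gamma(\tau^{-\delta}\sigma^{\delta/2}+1)$; summed over the at most $C_\Gamma\tau^{-2\delta}$ pairs and using $\sigma\geq\tau$ to absorb the $+1$ term, this contributes the target $C_\Gamma\tau^{-3\delta}\sigma^{\delta/2}$. In Case B, I use only the trivial bound $C_\Gamma\tau^{-\delta}$ on the $\mathbf d$-count, which reduces matters to the sub-estimate
\[
\#\{(\mathbf b,\mathbf c):|\alpha_{\mathbf b}-\alpha_{\mathbf c}|<\sqrt\sigma\}\leq C_\Gamma\tau^{-2\delta}\sigma^{\delta/2}.
\]

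For the sub-estimate I introduce $p_0:=\gamma_{\overline{\mathbf a'}}(\infty)$ and $p_{\mathbf c}:=\gamma_{\overline{\mathbf c'}}(p_0)=(\gamma_{\mathbf a'\mathbf c'})^{-1}(\infty)$; writing $I=[x_0,x_1]$ we have $\alpha_{\mathbf c}=\log\frac{p_{\mathbf c}-x_1}{p_{\mathbf c}-x_0}$. Iterating $\gamma_a(I_{\mathbf w})=I_{a\mathbf w}$ yields $p_0\in I_{\overline{\mathbf a'}}\subset I_{\overline{a_{n-1}}}$ and hence $p_{\mathbf c}\in I_{\overline{\mathbf c'}\,\overline{a_{n-1}}}$; applying Lemma~\ref{l:reversible} and Lemma~\ref{l:concatenate} to the word $a_{n-1}\mathbf c'$ shows $|I_{\overline{\mathbf c'}\,\overline{a_{n-1}}}|\asymp\tau$. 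Since $p_{\mathbf c}$ sits in some $I_{\overline{c_{|\mathbf c|-1}}}$ at distance $\geq C_\Gamma^{-1}$ from $I$, the map $p\mapsto\log\frac{p-x_1}{p-x_0}$ is bi-Lipschitz there, so $|\alpha_{\mathbf b}-\alpha_{\mathbf c}|<\sqrt\sigma$ places $p_{\mathbf c}$ into a finite union $J$ of intervals of total length $\leq C_\Gamma\sqrt\sigma$. The injectivity of $\mathbf c\mapsto\overline{\mathbf c'}\,\overline{a_{n-1}}$ (which holds because $c_1=a_n$ and $c_{|\mathbf c|}=e$ are both fixed) lets me bound the count of such $\mathbf c$ by the number of words of size $\asymp\tau$ whose interval meets $J$, which Lemma~\ref{l:partition-count-2} controls by $C_\Gamma(\tau^{-\delta}\sigma^{\delta/2}+1)$; summation over $\mathbf b\in Z(\tau)$ then yields the sub-estimate.

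The hard part is this final localization step. The natural containment $p_{\mathbf c}\in I_{\overline{\mathbf a'\mathbf c'}}$, at the scale $\tau^2$, is too tight: applying Lemma~\ref{l:partition-count-2} at the $\tau^2$ scale would cost an extra factor $\tau^{-\delta}$ and give only $\tau^{-2\delta}\sigma^{\delta/2}$ for the inner count, leading to $\tau^{-4\delta}\sigma^{\delta/2}$ overall. The trick is to work instead at the coarser $\tau$-scale interval $I_{\overline{\mathbf c'}\,\overline{a_{n-1}}}$, which is the natural $Z(\tau)$-scale object attached to $\mathbf c$ on the reversed side. This is precisely where the Reversal Lemma (converting an estimate for $a_{n-1}\mathbf c'$ into one for $\overline{\mathbf c'}\,\overline{a_{n-1}}$) and the full strength of Lemma~\ref{l:partition-count-2} (with $J$ of size potentially much larger than $\tau$) are both used, as foreshadowed in the paragraph preceding the statement.
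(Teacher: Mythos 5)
Your proof is correct and follows essentially the same strategy as the paper: the same dichotomy at threshold $\sqrt{\sigma}$, the same use of Lemma~\ref{l:distortion-discrepancy} plus Lemma~\ref{l:partition-count-2} to localize $x_{\mathbf d}$ in the ``far'' case, and the same key observation in the ``close'' case that $(\gamma_{\mathbf a'\mathbf c'})^{-1}(\infty)$ sits in a reversed interval of size $\asymp\tau$ (the paper uses $I_{\overline{\mathbf c'}}$ via~\eqref{e:Z-tau-prop-1.5}, you use the essentially identical $I_{\overline{\mathbf c'}\,\overline{a_{n-1}}}$). The only cosmetic difference is that you split on $|\alpha_{\mathbf b}-\alpha_{\mathbf c}|$ while the paper splits on $|(\gamma_{\mathbf a'\mathbf b'})^{-1}(\infty)-(\gamma_{\mathbf a'\mathbf c'})^{-1}(\infty)|$; these are bi-Lipschitz equivalent on the relevant compact domain, so the two decompositions are interchangeable and each side's required implication is exactly the one you (and they) check.
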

\begin{proof}
Without loss of generality, we may assume that
$\tau$ is small enough so that $|\mathbf c|\geq 2$ for all $\mathbf c\in Z(\tau)$.
For each $\mathbf b\in Z(\tau)$ such that
$\mathbf a\rightsquigarrow\mathbf b$, we have
\begin{equation}
  \label{e:different-distort}
\#\big\{\mathbf c\in Z(\tau)
\colon
\mathbf a\rightsquigarrow \mathbf c,\
|\gamma_{\mathbf a'\mathbf b'}^{-1}(\infty)
-\gamma_{\mathbf a'\mathbf c'}^{-1}(\infty)|
\leq \sqrt\sigma
\big\}
\leq C_\Gamma\, \tau^{-\delta}\sigma^{\delta/2}.
\end{equation}
Indeed, denoting $\mathbf e:=\overline{\mathbf c'}$, we have
$\gamma_{\mathbf a'\mathbf c'}^{-1}(\infty)=\gamma_{\mathbf e\overline{\mathbf a'}}(\infty)\in I_{\mathbf e}$.
Also, $C_\Gamma^{-1}\tau\leq |I_{\mathbf e}|\leq C_\Gamma\tau$ by~\eqref{e:Z-tau-prop-1.5} and~\eqref{e:concatenate}.
Therefore, the left-hand side of~\eqref{e:different-distort} is bounded by
$$
2r\cdot \#\big\{\mathbf e\in \mathcal W^\circ\colon
C_\Gamma^{-1}\tau\leq |I_{\mathbf e}|\leq C_\Gamma\tau,\
I_{\mathbf e}\cap J\neq\emptyset
\big\},\quad
J:=\gamma_{\mathbf a'\mathbf b'}^{-1}(\infty)+[-\sqrt\sigma,\sqrt\sigma].
$$
Now \eqref{e:different-distort} follows from~\eqref{e:partition-count-2}.

By~\eqref{e:different-distort} and~\eqref{e:partition-count}, the triples
$(\mathbf b,\mathbf c,\mathbf d)$ with 
$|\gamma_{\mathbf a'\mathbf b'}^{-1}(\infty)
-\gamma_{\mathbf a'\mathbf c'}^{-1}(\infty)|
\leq \sqrt\sigma$
contribute at most $C_\Gamma\tau^{-3\delta}\sigma^{\delta/2}$ to the left-hand side of~\eqref{e:triple-count}.
Therefore, it remains to show that for each $\mathbf b,\mathbf c\in Z(\tau)$ such that
$\mathbf a\rightsquigarrow\mathbf b$, $\mathbf a\rightsquigarrow\mathbf c$ and
\begin{equation}
  \label{e:different-distort-2}
|\gamma_{\mathbf a'\mathbf b'}^{-1}(\infty)
-\gamma_{\mathbf a'\mathbf c'}^{-1}(\infty)|
\geq \sqrt\sigma,
\end{equation}
we have
\begin{equation}
  \label{e:tripc-int}
\#\big\{
\mathbf d\in Z(\tau)\colon
\mathbf b\rightsquigarrow\mathbf d,\
\mathbf c\rightsquigarrow\mathbf d,\
|\gamma'_{\mathbf a'\mathbf b'}(x_{\mathbf d})-\gamma'_{\mathbf a'\mathbf c'}(x_{\mathbf d})|\leq\tau^2\sigma
\big\}\leq C_\Gamma\, \tau^{-\delta} \sigma^{\delta/2}.
\end{equation}
Denote by $b_n$ the last letter of $\mathbf b$; we may assume it is also the last letter of $\mathbf c$,
since otherwise the left-hand side of~\eqref{e:tripc-int} is zero.

By~\eqref{e:Z-tau-prop-1} and~\eqref{e:concatenate} we have
$C_\Gamma^{-1}\tau^2\leq |I_{\mathbf a'\mathbf b}|\leq C_\Gamma\tau^2$
and
$C_\Gamma^{-1}\tau^2\leq |I_{\mathbf a'\mathbf c}|\leq C_\Gamma\tau^2$.
By~\eqref{e:tree-derivative-1} this gives
$C_\Gamma^{-1}\tau^2\leq \gamma'_{\mathbf a'\mathbf b'}\leq C_\Gamma\tau^2$ and
$C_\Gamma^{-1}\tau^2\leq\gamma'_{\mathbf a'\mathbf c'}\leq C_\Gamma\tau^2$
on $I_{b_n}$. Thus it suffices to show that for any given constant $C_0$
depending only on the Schottky data,
\begin{equation}
  \label{e:tripc-int2}
\#\Big\{
\mathbf d\in Z(\tau)\colon
\mathbf b\rightsquigarrow\mathbf d,\
\mathbf c\rightsquigarrow\mathbf d,\
\Big|\log{\gamma'_{\mathbf a'\mathbf b'}(x_{\mathbf d})\over \gamma'_{\mathbf a'\mathbf c'}(x_{\mathbf d})}\Big|\leq C_0\sigma
\Big\}\leq C_\Gamma\, \tau^{-\delta} \sigma^{\delta/2}.
\end{equation}
By~\eqref{l:alpha-bound},
\eqref{e:distortion-factor},
and~\eqref{e:different-distort-2},
we have
$$
|\alpha(\gamma_{\mathbf a'\mathbf b'},I_{b_n})|,
|\alpha(\gamma_{\mathbf a'\mathbf c'},I_{b_n})|
\leq C_\Gamma,\quad
|\alpha(\gamma_{\mathbf a'\mathbf b'},I_{b_n})-\alpha(\gamma_{\mathbf a'\mathbf c'},I_{b_n})|\geq C_\Gamma^{-1}\sqrt{\sigma}.
$$
By Lemma~\ref{l:distortion-discrepancy},
there exists an interval $\widetilde J$
of size $C_\Gamma\sqrt{\sigma}$ depending on $\mathbf a,\mathbf b,\mathbf c$ such that
 for each $\mathbf d$ on the left-hand side of~\eqref{e:tripc-int2},
the point $x_{\mathbf d}$ lies in $\widetilde J$
and thus $I_{\mathbf d}\cap\widetilde J\neq\emptyset$.
 Then by~\eqref{e:partition-count-2} and~\eqref{e:Z-tau-prop-1} we obtain~\eqref{e:tripc-int2},
finishing the proof.
\end{proof}

\subsection{Transfer operators}
  \label{s:transfer}

For a partition $Z\subset\mathcal W^\circ$, define
the operator
$$
\mathcal L_Z:\Bor(\mathcal I)\to \Bor(\mathcal I),\quad
\mathcal I:=\bigsqcup_{b\in\mathcal A}I_b,
$$
where $\Bor(\mathcal I)$ denotes the space
of all bounded Borel functions on $\mathcal I$,
as follows: 
$$
\mathcal L_Zf(x)=\sum_{\mathbf a\in Z,\, \mathbf a\rightsquigarrow b}
f(\gamma_{\mathbf a'}(x))w_{\mathbf a'}(x),\quad x\in I_b.
$$
Here the weight $w_{\mathbf a'}(x)$ is defined in~\eqref{e:w-a}.
The Patterson--Sullivan measure is invariant under the adjoint of
$\mathcal L_Z$:
\begin{lemm}
  \label{l:transfer}
Assume that $Z\subset\mathcal W^\circ$ is a partition.
Then we have for all $f\in\Bor(\mathcal I)$,
\begin{equation}
  \label{e:transfer-invariant}
\int_{\Lambda_\Gamma} f\,d\mu=\int_{\Lambda_\Gamma} \mathcal L_Z  f\,d\mu.
\end{equation}
\end{lemm}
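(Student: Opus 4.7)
The plan is to reduce the identity to the single-step equivariance relation~\eqref{e:psin-2} by decomposing $\Lambda_\Gamma$ according to the partition $Z$ and then regrouping by the last letter of the partition words.

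First I would use the characterization~\eqref{e:partition-intervals} of partitions to split the left-hand side:
$$
\int_{\Lambda_\Gamma} f\,d\mu = \sum_{\mathbf a\in Z}\int_{I_{\mathbf a}\cap\Lambda_\Gamma} f\,d\mu.
$$
Since every $\mathbf a\in Z\subset\mathcal W^\circ$ is a nonempty word, writing $\mathbf a = \mathbf a' b$ with $b\in\mathcal A$ the last letter of $\mathbf a$ and $\mathbf a'\to b$, the equivariance formula~\eqref{e:psin-2} (applied to the word $\mathbf a'$ and the letter $b$) gives
$$
\int_{I_{\mathbf a}} f\,d\mu = \int_{I_{\mathbf a' b}} f\,d\mu = \int_{I_b} f(\gamma_{\mathbf a'}(x))\,w_{\mathbf a'}(x)\,d\mu(x).
$$

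Next I would regroup the sum over $\mathbf a\in Z$ according to this last letter. Writing $\mathbf a\rightsquigarrow b$ for $b\in\mathcal A$ to mean that the last letter of $\mathbf a$ equals $b$ (which is consistent with the notation from~\S\ref{s:schottky} since $b$ is a one-letter word), we get
$$
\sum_{\mathbf a\in Z}\int_{I_{\mathbf a}}f\,d\mu = \sum_{b\in\mathcal A}\int_{I_b}\Bigl(\sum_{\mathbf a\in Z,\,\mathbf a\rightsquigarrow b} f(\gamma_{\mathbf a'}(x))\,w_{\mathbf a'}(x)\Bigr)d\mu(x) = \sum_{b\in\mathcal A}\int_{I_b}\mathcal L_Z f(x)\,d\mu(x).
$$
Since $\mu$ is supported on $\Lambda_\Gamma\subset\bigsqcup_{b\in\mathcal A} I_b = \mathcal I$, the final sum equals $\int_{\Lambda_\Gamma}\mathcal L_Z f\,d\mu$, which is the desired identity~\eqref{e:transfer-invariant}.

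There is no real obstacle: the result is a bookkeeping consequence of~\eqref{e:partition-intervals} and~\eqref{e:psin-2}. The only small point to verify carefully is that for each $\mathbf a\in Z$ the word $\mathbf a$ indeed decomposes as $\mathbf a'b$ with $\mathbf a'\to b$, which is immediate from the definition of $\mathcal W^\circ$, and that the decomposition in~\eqref{e:partition-intervals} holds as a disjoint union (so no multiplicity issues arise when summing).
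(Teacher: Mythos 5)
Your proof is correct and follows essentially the same approach as the paper: decompose $\int_{\Lambda_\Gamma} f\,d\mu$ via the partition characterization~\eqref{e:partition-intervals}, group the terms by the last letter $b$ of each word $\mathbf a\in Z$, apply the equivariance relation~\eqref{e:psin-2} to each piece, and recognize the result as $\int\mathcal L_Z f\,d\mu$. The paper merely compresses this into one displayed sum and a reference to~\eqref{e:psin-2}.
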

\begin{proof}
Since $Z$ is a partition, we have by~\eqref{e:partition-intervals}
$$
\int_{\Lambda_\Gamma} f\,d\mu
=\sum_{b\in\mathcal A}\ \sum_{\mathbf a\in Z,\
\mathbf a\rightsquigarrow b} \int_{I_{\mathbf a}} f\,d\mu
$$
which together with~\eqref{e:psin-2} gives~\eqref{e:transfer-invariant}.
\end{proof}
We will use the following corollary of Lemma~\ref{l:transfer}:
\begin{equation}
  \label{e:transfer-invariant-2}
\int_{\Lambda_\Gamma}f\,d\mu=\int_{\Lambda_\Gamma} \mathcal L_Z^k f\,d\mu,\quad
f\in \Bor(\mathcal I),\
k\in\mathbb N.
\end{equation}
Note that $\mathcal L_Z^k f$ is given by the formula
\begin{equation}
  \label{e:transfer-power-1}
\mathcal L_Z^k f(x)=\sum_{\mathbf a_1,\dots,\mathbf a_k\in Z\atop
\mathbf a_1\rightsquigarrow\dots\rightsquigarrow \mathbf a_k\rightsquigarrow b}
f(\gamma_{\mathbf a_1'\dots\mathbf a_k'}(x))w_{\mathbf a_1'\dots\mathbf a_k'}(x),\quad
x\in I_b.
\end{equation}

\section{Fourier decay bound}
  \label{s:fourier}

\subsection{Key combinatorial tool}

The key tool in the proof of Theorem~\ref{t:fourier} is
the following result~\cite[Lemma~8.43]{SumProduct}
(more precisely, its version in Proposition~\ref{l:key-tool} below):
\begin{prop}
  \label{l:bourgain-literally}
For all $\delta_1>0$, there exist $\varepsilon_3,\varepsilon_4>0$ and
$k\in\mathbb N$ such that the following holds. Let $\mu_0$ be a probability
measure on $[{1\over 2},1]$ and let $N$ be a large integer. Assume that
for all $\sigma\in [N^{-1},N^{-\varepsilon_3}]$
\begin{equation}
  \label{e:bl-1}
\sup_x \mu_0\big([x-\sigma,x+\sigma]\big)< \sigma^{\delta_1}.
\end{equation}
Then for all $\eta\in \mathbb R$, $|\eta|\sim N$,
\begin{equation}
  \label{e:bl-2}
\bigg|\int \exp(2\pi i \eta x_1\cdots x_k)\,d\mu_0(x_1)\dots d\mu_0(x_k)\bigg|
\leq N^{-\varepsilon_4}.
\end{equation}
\end{prop}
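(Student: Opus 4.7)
The plan is to establish the estimate via the discretized sum-product theorem in $\mathbb R$~\cite[Theorem~1]{SumProduct}, arguing by contradiction. Denote by $I(\eta)$ the left-hand side of~\eqref{e:bl-2} and suppose $|I(\eta)|>N^{-\varepsilon_4}$ for a small $\varepsilon_4$ to be chosen. The first move is a Cauchy--Schwarz/pigeonhole reduction: squaring and expanding gives
$$
|I(\eta)|^2=\int\exp\big(2\pi i\eta(x_1\cdots x_k-y_1\cdots y_k)\big)\,d\mu_0^{2k}(x,y),
$$
and the lower bound on $|I(\eta)|$ forces a $\gtrsim N^{-O(\varepsilon_4)}$ fraction of tuples $(x,y)$ to satisfy $|x_1\cdots x_k-y_1\cdots y_k|\leq N^{-1+O(\varepsilon_4)}$. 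Passing to logarithmic coordinates $u_i=\log x_i$, which is bi-Lipschitz on $[1/2,1]$, this translates into a large $k$-fold additive energy statement for the pushforward $\log_*\mu_0$ at scale $N^{-1}$.

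The next step is to feed this additive structure into the sum-product theorem. By a Balog--Szemer\'edi--Gowers-type extraction one obtains a subset $A\subset\operatorname{supp}\mu_0$ of substantial $\mu_0$-mass whose logarithms have small $k$-fold sumset, equivalently $A$ itself has small multiplicative $k$-fold product set at scale $N^{-1}$. On the other hand, the non-concentration hypothesis~\eqref{e:bl-1}, required uniformly for $\sigma\in[N^{-1},N^{-\varepsilon_3}]$, places $A$ in the discretized regime of~\cite[Theorem~1]{SumProduct}, which yields growth $|A\cdot A|+|A+A|\geq|A|^{1+\tau}$ for some $\tau=\tau(\delta_1)>0$. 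Combined with the small multiplicative doubling already established, this forces $A$ to have large \emph{additive} sumset. Iterating this dichotomy $k=k(\delta_1)$ times, each time propagating through Pl\"unnecke--Ruzsa inequalities, contradicts~\eqref{e:bl-1} provided $\varepsilon_4$ is chosen small enough relative to $\tau(\delta_1)^k$.

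The main obstacle is the quantitative multiscale bookkeeping. A single application of the sum-product theorem only gives a small polynomial improvement, so the argument must iterate, and at each step one replaces the measure under consideration by a related one (a multiplicative convolution, a product set, or a Balog--Szemer\'edi--Gowers restriction) whose non-concentration must be re-verified at a slightly coarser scale. This is precisely why~\eqref{e:bl-1} is required uniformly over the full range $[N^{-1},N^{-\varepsilon_3}]$ rather than at a single scale: the intermediate scales absorb the loss at each iteration. The constants then emerge from this scheme: $\varepsilon_3$ is determined by the aggregate scale loss, $k$ by the number of sum-product steps required, and $\varepsilon_4$ by the compounded polynomial gain.
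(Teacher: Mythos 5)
The paper does not prove Proposition~\ref{l:bourgain-literally}: it is quoted verbatim from Bourgain's earlier work~\cite[Lemma~8.43]{SumProduct}, and the only in-paper content is the remark immediately following, which points to the discretized sum-product theorem~\cite[Theorem~1]{SumProduct} as the main ingredient and refers to~\cite{BGK,BenGreen} for the cleaner prime-field analogue. Your sketch is therefore a reconstruction of a proof that lives outside this paper; its broad outline (sum-product plus iteration, with the full range of $\sigma$ in~\eqref{e:bl-1} absorbing the multiscale losses) is consistent with the remark's hint, but there is a concrete gap at the very first reduction.

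You assert that $|I(\eta)|^2>N^{-2\varepsilon_4}$ at a single $\eta\sim N$ forces a $\gtrsim N^{-O(\varepsilon_4)}$ fraction of tuples $(x,y)$ to satisfy $|x_1\cdots x_k-y_1\cdots y_k|\le N^{-1+O(\varepsilon_4)}$. This does not follow: the integrand $\cos\bigl(2\pi\eta(x_1\cdots x_k-y_1\cdots y_k)\bigr)$ is signed, and a moderately large value of the integral is compatible with the phase being essentially equidistributed modulo $1$ off a negligible near-diagonal set, so a pointwise Fourier bound at one frequency forces no such concentration. Concentration does follow from an $L^2$-averaged bound over $|\eta|\lesssim N$ via Plancherel, and the standard way to upgrade a pointwise hypothesis to an $L^2$ one is to exploit the multiplicative structure: if $\nu_j$ denotes the $j$-fold multiplicative convolution of $\mu_0$, then $\widehat{\nu_k}(\eta)=\int\widehat{\nu_{k-1}}(\eta x)\,d\mu_0(x)$, so a large value at one $\eta$ spreads to a $\mu_0$-positive set of rescaled frequencies, which is what feeds the energy estimate. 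Your sketch omits this spreading step, and without it the passage from ``one large exponential sum'' to ``$k$-fold additive energy of $\log_*\mu_0$'' is not justified; the later steps (Balog--Szemer\'edi--Gowers extraction of a single $A$ from a $k$-fold energy condition, a Pl\"unnecke--Ruzsa iteration producing a gain ``$\tau(\delta_1)^k$'') are then too gestural to assess, and making them precise would essentially amount to reproducing the argument of~\cite[Lemma~8.43]{SumProduct}, which the present paper deliberately leaves as a citation.
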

\Remark
The main component of the proof of~\cite[Lemma~8.43]{SumProduct} is the discretized sum-product theorem~\cite[Theorem~1]{SumProduct}.
Roughly speaking it states that
for a finite set $A\subset [{1\over 2},1]$ of ${1\over N}$-separated points which has box dimension
$\geq\delta_1>0$, either the sum set $A+A$
or the product set $A\cdot A$ has size at least $N^{\varepsilon}\cdot \#(A)$,
where $\varepsilon>0$ depends only on~$\delta_1$.
The box dimension condition is analogous to~\eqref{e:bl-1}.
We refer the reader to the papers by the first author~\cite{Bourgain0,SumProduct}
for history and applications of the sum-product theorem.
For the passage from the sum-product theorem to the estimate~\eqref{e:bl-2}
in the cleaner case of prime fields see Bourgain--Glibichuk--Konyagin~\cite[Theorem~5]{BGK}.
See also the expository article of Green~\cite{BenGreen}.

The following is an adaptation of Proposition~\ref{l:bourgain-literally}
to the case of several different measures with slightly relaxed assumptions:
\begin{prop}
  \label{l:bourger}
Fix $\delta_0>0$. Then there exist $k\in\mathbb N$, $\varepsilon_2>0$ depending
only on~$\delta_0$ such that the following holds. Let $C_0>0$ and
$\mu_1,\dots,\mu_k$ be Borel measures on $[C_0^{-1},C_0]\subset\mathbb R$
such that $\mu_j(\mathbb R)\leq C_0$. Let $\eta\in\mathbb R$,
$|\eta|\geq 1$, and assume that
for all $\sigma\in \big[C_0|\eta|^{-1},C_0^{-1}|\eta|^{-\varepsilon_2}\big]$
and $j=1,\dots,k$
\begin{equation}
  \label{e:bourger-1}
\mu_j\times\mu_j\big(\big\{(x,y)\in \mathbb R^2\colon |x-y|\leq\sigma\big\}\big)\leq C_0\cdot \sigma^{\delta_0}.
\end{equation}
Then there exists a constant $C_1$ depending only on $C_0,\delta_0$ such that
\begin{equation}
  \label{e:bourger-2}
\bigg|\int \exp(2\pi i\eta x_1\cdots x_k)\,d\mu_1(x_1)\dots d\mu_k(x_k)\bigg|\leq
C_1|\eta|^{-\varepsilon_2}.
\end{equation}
\end{prop}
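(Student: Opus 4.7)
The plan is to deduce Proposition~\ref{l:bourger} from Proposition~\ref{l:bourgain-literally} by two reductions: a \emph{regularization step} that promotes the $L^2$-type energy bound~\eqref{e:bourger-1} to a pointwise Frostman bound of the form~\eqref{e:bl-1}, after discarding a subset of each $\mu_j$ of small measure; and a \emph{rescaling step} that moves the supports onto $[1/2,1]$ and normalizes the total masses to $1$. Fix $\delta_1 := \delta_0/2$, apply Proposition~\ref{l:bourgain-literally} with this $\delta_1$ to obtain $\varepsilon_3, \varepsilon_4, k$, and set
$$\varepsilon_2 := \tfrac12\min\bigl(\varepsilon_4,\ \varepsilon_3(\delta_0-\delta_1)\bigr) > 0.$$
One may assume that $|\eta|$ is large and each $\mu_j(\mathbb{R}) \geq |\eta|^{-\varepsilon_2}$; otherwise the trivial bound $|\int\cdots| \leq \prod_j \mu_j(\mathbb{R})$ already suffices.

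For each $j$, regularize as follows. For each dyadic $\sigma$ in the range $[|\eta|^{-1}, |\eta|^{-\varepsilon_3}]$, set $B_{j,\sigma} := \{x : \mu_j([x-\sigma,x+\sigma]) > \sigma^{\delta_1}\}$. Markov's inequality combined with~\eqref{e:bourger-1} yields $\mu_j(B_{j,\sigma}) \leq \sigma^{-\delta_1}\int \mu_j([x-\sigma,x+\sigma])\,d\mu_j(x) \leq C_0\sigma^{\delta_0-\delta_1}$, so summing the geometric series over dyadic $\sigma$ gives $\mu_j(B_j) \leq C|\eta|^{-\varepsilon_3(\delta_0-\delta_1)} \leq C|\eta|^{-2\varepsilon_2}$ where $B_j := \bigcup_\sigma B_{j,\sigma}$. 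Define $\tilde\mu_j := \mu_j|_{[C_0^{-1},C_0]\setminus B_j}$. A doubling argument (if $x$ lies within $\sigma$ of $G_j := [C_0^{-1},C_0]\setminus B_j$, pick $y \in G_j$ with $|x-y|\leq \sigma$, so $[x-\sigma,x+\sigma]\subset[y-2\sigma,y+2\sigma]$, and use that $y$ is good at scale $2\sigma$) gives the uniform Frostman bound $\tilde\mu_j([x-\sigma,x+\sigma]) \leq 2^{\delta_1}\sigma^{\delta_1}$ over all $x\in\mathbb{R}$ and dyadic $\sigma$ in the range; after mild shrinking of $\delta_1$, the normalized measure $\tilde\mu_j/\tilde\mu_j(\mathbb{R})$ satisfies~\eqref{e:bl-1}.

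To conclude, partition $[C_0^{-1},C_0]$ into $O(\log C_0)$ dyadic intervals of the form $[a/2,a]$, split each $\tilde\mu_j$ accordingly, and expand the integral on the left of~\eqref{e:bourger-2} into $O((\log C_0)^k)$ pieces. On each piece, rescale $x_j \mapsto x_j/a_j$ to move each measure onto $[1/2,1]$; this replaces $\eta$ by $\eta' := \eta\, a_1\cdots a_k$, with $|\eta'|\sim|\eta|$ up to constants depending only on $C_0$ and $k$. Applying (a multi-measure version of) Proposition~\ref{l:bourgain-literally} with $N = |\eta'|$ bounds each rescaled piece by $C|\eta|^{-\varepsilon_4} \leq C|\eta|^{-2\varepsilon_2}$. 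Meanwhile, expanding $\mu_j = \tilde\mu_j + \mu_j|_{B_j}$ shows that the difference between the original integral and its restricted version is controlled by $k C_0^{k-1} \cdot \max_j \mu_j(B_j) \leq C|\eta|^{-2\varepsilon_2}$. Both contributions are dominated by $C_1|\eta|^{-\varepsilon_2}$, giving~\eqref{e:bourger-2}.

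The main subtlety is that Proposition~\ref{l:bourgain-literally} is stated for a single probability measure $\mu_0$, while Proposition~\ref{l:bourger} allows $k$ distinct measures. Since the proof of~\cite[Lemma~8.43]{SumProduct} proceeds via iterated Cauchy--Schwarz steps that invoke the Frostman bound on one measure at a time, the extension to different $\mu_j$ is obtained by simply keeping track of which measure appears at each symmetrization; no new combinatorial input is required. This is the only ingredient beyond the regularization and rescaling above, and is what justifies calling Proposition~\ref{l:bourger} an adaptation of Proposition~\ref{l:bourgain-literally}.
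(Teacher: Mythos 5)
Your proof has the right overall shape (pointwise Frostman bound from the energy bound, rescale to $[1/2,1]$, apply Proposition~\ref{l:bourgain-literally}, bound masses from below to absorb the normalization), and it could probably be made to work, but it misses the two simplifications the paper actually uses and the second one matters. First, the regularization step (exceptional sets $B_{j,\sigma}$, Markov, doubling) is unnecessary: the hypothesis~\eqref{e:bourger-1} gives a \emph{pointwise} Frostman bound directly, since $[x-\sigma,x+\sigma]^2\subset\{(x,y)\colon |x-y|\leq 2\sigma\}$ implies $\mu_j([x-\sigma,x+\sigma])^2\leq (\mu_j\times\mu_j)\{|x-y|\leq 2\sigma\}\leq C_0(2\sigma)^{\delta_0}$, so $\sup_x\mu_j([x-\sigma,x+\sigma])\leq 2\sqrt{C_0}\,\sigma^{\delta_0/2}$ for \emph{all} $x$, with no exceptional set to discard. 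This is the same exponent $\delta_0/2$ your argument produces, so the exceptional-set machinery buys you nothing.

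Second, and more substantively, your treatment of the multi-measure case is a gap rather than a subtlety. You assert that~\cite[Lemma~8.43]{SumProduct} extends to $k$ distinct measures ``by keeping track of which measure appears at each symmetrization,'' but this is a claim about the internal workings of a black-box lemma, and it is not obviously correct: the underlying discretized sum-product theorem~\cite[Theorem~1]{SumProduct} is a statement about a \emph{single} set $A$, and the iterated Cauchy--Schwarz steps in the proof of Lemma~8.43 are precisely designed to symmetrize the expression so that the sum-product input applies; with $k$ different measures it is not automatic that the symmetrization closes. The paper avoids this entirely with a short multilinearity argument that uses Proposition~\ref{l:bourgain-literally} strictly as stated: set $\mu_\lambda:=\lambda_1\mu_1+\dots+\lambda_k\mu_k$ and $G(\lambda):=F(\mu_\lambda,\dots,\mu_\lambda)$; each $\mu_\lambda$ still satisfies the Frostman hypothesis (with a worse constant), so the equal-measures case gives $\sup_{\lambda\in[0,1]^k}|G(\lambda)|\leq C'|\eta|^{-\varepsilon_2}$; then since $G$ is a polynomial of degree $k$ and $F$ is symmetric, $F(\mu_1,\dots,\mu_k)=\tfrac{1}{k!}\partial_{\lambda_1}\cdots\partial_{\lambda_k}G(0)$ is bounded by $C_1|\eta|^{-\varepsilon_2}$ via a Markov-brothers-type estimate on the coefficients of a uniformly bounded polynomial. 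You should either supply this polynomialization trick or actually verify the multi-measure version of~\cite[Lemma~8.43]{SumProduct}; as written the step is not justified.
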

\begin{proof}
We may assume
that $|\eta|$ is large depending on $C_0,\delta_0$.
By breaking $\mu_j$ into pieces supported on $[2^\ell,2^{\ell+1}]$
where $|\ell|\lesssim \log_2 C_0$ and rescaling $\eta$, we reduce to the case when each $\mu_j$ is supported on $[{1\over 2},1]$.

Put $\delta_1:=\delta_0/6$, choose $\varepsilon_3,\varepsilon_4,k$
as in Proposition~\ref{l:bourgain-literally}, and put
$$
\varepsilon_2:={\min(\varepsilon_4,\varepsilon_3\delta_0)\over 10}.
$$
We henceforth replace~\eqref{e:bourger-1}
with the following assumption:
\begin{equation}
  \label{e:bourger-3}
\sup_x \mu_j\big([x-\sigma,x+\sigma]\big)\leq 2\sqrt{C_0}\cdot \sigma^{\delta_0/2},\quad
\sigma\in \big[C_0|\eta|^{-1},(2C_0)^{-1}|\eta|^{-\varepsilon_2}\big]
\end{equation}
which follows from~\eqref{e:bourger-1} since $[x-\sigma,x+\sigma]^2\subset \{(x,y)\in\mathbb R^2\colon |x-y|\leq 2\sigma\}$.

We next claim that it suffices to consider the case $\mu_1=\dots=\mu_k$.
Indeed, denote
$$
F(\mu_1,\dots,\mu_k):=\int \exp(2\pi i\eta x_1\cdots x_k)\,d\mu_1(x_1)\dots d\mu_k(x_k).
$$
For $\lambda:=(\lambda_1,\dots,\lambda_k)\in [0,1]^k$, put
$$
G(\lambda):=F(\mu_\lambda,\dots,\mu_\lambda),\quad
\mu_\lambda:=
\lambda_1\mu_1+\dots+\lambda_k\mu_k.
$$
If $\mu_1,\dots,\mu_k$ satisfy~\eqref{e:bourger-3},
then
the measure $\mu_\lambda$
satisfies~\eqref{e:bourger-3} as well
(with $C_0$ replaced by $k^2C_0$).
Then the version of Proposition~\ref{l:bourger} for the case $\mu_1=\dots=\mu_k$
implies that for some $C'_1$ depending only on $\delta_0,C_0$
$$
\sup_{\lambda\in [0,1]^k}|G(\lambda)|\leq C'_1|\eta|^{-\varepsilon_2}.
$$
Since $G$ is a polynomial of degree $k$,
we have for some $C_1$ depending only on $\delta_0,C_0$
$$
|F(\mu_1,\dots,\mu_k)|={1\over k!}|\partial_{\lambda_1}\dots \partial_{\lambda_k} G(0,\dots,0)|
\leq C_1|\eta|^{-\varepsilon_2}
$$
giving~\eqref{e:bourger-2} in the general case.

We henceforth assume that $\mu_1=\dots=\mu_k$. We consider two cases:
\begin{enumerate}
\item $\mu_1(\mathbb R)\geq |\eta|^{-\varepsilon_3\delta_0/10}$:
define the probability measure $\mu_0$ on $[{1\over 2},1]$ by
$$
\mu_0:={\mu_1\over\mu_1(\mathbb R)}.
$$
Choose an integer $N$ such that $N\leq |\eta|\leq 2N$.
By~\eqref{e:bourger-3} we have
$$
\sup_x \mu_0\big([x-\sigma,x+\sigma]\big) < \sigma^{\delta_1},\quad
\sigma\in [C_0N^{-1},N^{-\varepsilon_3}].
$$
Same is true for $\sigma\in [N^{-1},C_0N^{-1}]$
by applying~\eqref{e:bourger-3} to $\sigma:=C_0N^{-1}$.
Then~\eqref{e:bourger-2} follows from Proposition~\ref{l:bourgain-literally}.
\item $\mu_1(\mathbb R)\leq |\eta|^{-\varepsilon_3\delta_0/10}$:
the bound~\eqref{e:bourger-2} follows from the triangle inequality.\qedhere
\end{enumerate}
\end{proof}
In the discrete probability case Proposition~\ref{l:bourger}
gives the following statement which is used in the key step
of the proof of Theorem~\ref{t:fourier} at the end of~\S\ref{s:fourier-2}:
\begin{prop}
  \label{l:key-tool}
Fix $\delta_0>0$. Then there exist $k\in\mathbb N$, $\varepsilon_2>0$
depending only on $\delta_0$ such that
the following holds.
Let $C_0,N_{\mathcal Z}\geq 0$
and $\mathcal Z_1,\dots,\mathcal Z_k$ be finite sets such that
$\#(\mathcal Z_j)\leq C_0N_{\mathcal Z}$.
Take some maps
$$
\zeta_j:\mathcal Z_j\to [C_0^{-1},C_0],\quad
j=1,\dots,k.
$$
Let $\eta\in\mathbb R$, $|\eta|>1$, and consider the sum
$$
S_{k}(\eta)=N_{\mathcal Z}^{-k}\sum_{\mathbf b_1\in\mathcal Z_1,\dots,\mathbf b_k\in\mathcal Z_k} \exp\big(2\pi i\eta \zeta_1(\mathbf b_1)\cdots \zeta_k(\mathbf b_k)\big).
$$
Assume that $\zeta_j$ satisfy for all
$\sigma\in \big[|\eta|^{-1}, |\eta|^{-\varepsilon_2}\big]$
and $j=1,\dots,k$
\begin{equation}
  \label{e:boxdim}
\#\{(\mathbf b,\mathbf c)\in \mathcal Z_j^2\colon |\zeta_j(\mathbf b)-\zeta_j(\mathbf c)|\leq \sigma\}\leq C_0N_{\mathcal Z}^2\cdot \sigma^{\delta_0}.
\end{equation}
Then we have for some constant $C_1$ depending only on $C_0,\delta_0$
\begin{equation}
  \label{e:fourdec}
|S_{k}(\eta)|\leq C_1|\eta|^{-\varepsilon_2}.
\end{equation}
\end{prop}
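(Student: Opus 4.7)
The plan is to deduce Proposition~\ref{l:key-tool} directly from the measure-theoretic statement Proposition~\ref{l:bourger} by forming atomic measures from the maps $\zeta_j$. I would use the \emph{same} constants $k,\varepsilon_2$ as those produced by Proposition~\ref{l:bourger} for the given $\delta_0$, so no new combinatorial content is needed; the proof is purely a change-of-language.

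Concretely, I would define for each $j=1,\dots,k$ the discrete measure
$$
\mu_j:=N_{\mathcal Z}^{-1}\sum_{\mathbf b\in\mathcal Z_j}\delta_{\zeta_j(\mathbf b)}.
$$
Since $\zeta_j(\mathcal Z_j)\subset [C_0^{-1},C_0]$ and $\#(\mathcal Z_j)\leq C_0 N_{\mathcal Z}$, each $\mu_j$ is a Borel measure supported in $[C_0^{-1},C_0]$ with total mass $\mu_j(\mathbb R)\leq C_0$. By the definition of $S_k(\eta)$, we have the exact identity
$$
S_k(\eta)=\int \exp\bigl(2\pi i\eta\, x_1\cdots x_k\bigr)\,d\mu_1(x_1)\cdots d\mu_k(x_k),
$$
so that the conclusion~\eqref{e:fourdec} is exactly the conclusion~\eqref{e:bourger-2} of Proposition~\ref{l:bourger} applied to these $\mu_j$.

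It then remains to check that the discrete box-dimension hypothesis~\eqref{e:boxdim} translates into the product-measure hypothesis~\eqref{e:bourger-1}. This is immediate: for any $\sigma>0$,
$$
\mu_j\times\mu_j\bigl(\{(x,y)\colon |x-y|\leq\sigma\}\bigr)
=N_{\mathcal Z}^{-2}\,\#\bigl\{(\mathbf b,\mathbf c)\in\mathcal Z_j^2\colon |\zeta_j(\mathbf b)-\zeta_j(\mathbf c)|\leq\sigma\bigr\},
$$
so~\eqref{e:boxdim} yields $\mu_j\times\mu_j(\cdots)\leq C_0\sigma^{\delta_0}$ for all $\sigma\in[|\eta|^{-1},|\eta|^{-\varepsilon_2}]$. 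Since we may assume $C_0\geq 1$ (else enlarge it), the range $[C_0|\eta|^{-1},C_0^{-1}|\eta|^{-\varepsilon_2}]$ required by Proposition~\ref{l:bourger} is contained in $[|\eta|^{-1},|\eta|^{-\varepsilon_2}]$, and~\eqref{e:bourger-1} holds with the same constant $C_0$. Proposition~\ref{l:bourger} then delivers~\eqref{e:fourdec} with a constant $C_1$ depending only on $C_0$ and $\delta_0$.

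Since this is just a translation between discrete sums and atomic measures, there is no real obstacle: all the analytic work — the reduction to Proposition~\ref{l:bourgain-literally}, the polarization argument over $\lambda\in[0,1]^k$, and the invocation of the discretized sum-product theorem — has already been encapsulated in Proposition~\ref{l:bourger}. The only points requiring a moment of care are the matching of the constants $C_0$ in the two hypotheses and the inclusion of $\sigma$-ranges, both of which are handled by the observations above.
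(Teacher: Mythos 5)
Your proposal is correct and is exactly the argument the paper uses: the paper's proof consists of defining $\mu_j(A):=N_{\mathcal Z}^{-1}\#\{\mathbf b\in\mathcal Z_j\colon\zeta_j(\mathbf b)\in A\}$, which is the same atomic measure $N_{\mathcal Z}^{-1}\sum_{\mathbf b}\delta_{\zeta_j(\mathbf b)}$ you write down, and invoking Proposition~\ref{l:bourger}. You have simply spelled out the routine verifications (support, mass bound, identity for $S_k(\eta)$, translation of~\eqref{e:boxdim} into~\eqref{e:bourger-1}, and the $\sigma$-range inclusion) which the paper leaves implicit.
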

\begin{proof}
It suffices to apply Proposition~\ref{l:bourger}
to the measures $\mu_j$ defined by
$$
\mu_j(A):=N_{\mathcal Z}^{-1}\cdot \#\{\mathbf b\in\mathcal Z_j\colon
\zeta_j(\mathbf b)\in A\},\quad
j=1,\dots,k.\qedhere
$$
\end{proof}

\subsection{A combinatorial description of the oscillatory integral}
  \label{s:fourier-1}

We now begin the proof of Theorem~\ref{t:fourier}.
We fix a Schottky representation for $M$ as in~\S\ref{s:schottky}.
In this section $C$ denotes constants which depend only on $C_{\varphi, g}$
and the Schottky data.

Put $\delta_0:=\delta/4$ and choose
$k\in\mathbb N$, $\varepsilon_2>0$ from Proposition~\ref{l:key-tool},
depending only on~$\delta$.
Let $\xi$ be the frequency parameter in~\eqref{e:fourier}.
Without loss
of generality we may assume that $|\xi|\geq C$.
Define the small number $\tau>0$ by
\begin{equation}
  \label{e:tau-def}
|\xi|=\tau^{-2k-3/2}.
\end{equation}
Let $Z(\tau)\subset\mathcal W^\circ$ be the partition defined in~\eqref{e:Z-tau}
and $\mathcal L_{Z(\tau)}$ be the associated transfer operator,
see~\S\ref{s:transfer}. Recall from~\eqref{e:partition-count} that
\begin{equation}
  \label{e:Z-tau-cnt}
\#(Z(\tau))\leq C\tau^{-\delta}.
\end{equation}
Moreover, by~\eqref{e:Z-tau-prop-2} and~\eqref{e:w-a}
we have for each $\mathbf a=a_1\dots a_n\in Z(\tau)$,
\begin{equation}
  \label{e:weightless}
w_{\mathbf a'}\leq C\tau^\delta\quad\text{on }I_{a_n}.
\end{equation}
We introduce some notation used throughout this section:
\begin{itemize}
\item we denote
$$
\mathbf A=(\mathbf a_0,\dots,\mathbf a_k)\in Z(\tau)^{k+1},\quad
\mathbf B=(\mathbf b_1,\dots,\mathbf b_k)\in Z(\tau)^k;
$$
\item we write $\mathbf A\leftrightarrow \mathbf B$ if and only if
$\mathbf a_{j-1}\rightsquigarrow \mathbf b_j\rightsquigarrow \mathbf a_j$
for all $j=1,\dots,k$;
\item if $\mathbf A\leftrightarrow \mathbf B$, then we define
the words $\mathbf A *\mathbf B:= \mathbf a_0' \mathbf b_1'\mathbf a_1'\mathbf b_2'\dots
\mathbf a_{k-1}'\mathbf b_{k}'\mathbf a'_k$
and $\mathbf A\#\mathbf B:=\mathbf a_0' \mathbf b_1'\mathbf a_1'\mathbf b_2'\dots
\mathbf a_{k-1}'\mathbf b_{k}'$;
\item denote by $b(\mathbf A)\in\mathcal A$ the last letter of $\mathbf a_k$;
\item for each $\mathbf a\in\mathcal W^\circ$, denote by $x_{\mathbf a}$
the center of $I_{\mathbf a}$;
\item for $j\in \{1,\dots,k\}$ and $\mathbf b\in Z(\tau)$
such that $\mathbf a_{j-1}\rightsquigarrow \mathbf b\rightsquigarrow\mathbf a_j$,
define
\begin{equation}
  \label{e:zeta-def}
\zeta_{j,\mathbf A}(\mathbf b):=\tau^{-2}\gamma'_{\mathbf a_{j-1}'\mathbf b'}(x_{\mathbf a_j})
\end{equation}
and note that $\zeta_{j,\mathbf A}(\mathbf b)\in [C^{-1},C]$
by the chain rule and~\eqref{e:Z-tau-prop-2}.
\end{itemize}
Using the functions $\varphi,g$ from the
statement of Theorem~\ref{t:fourier}, define
\begin{equation}
  \label{e:the-f}
f(x):=\exp(i\xi\varphi(x))g(x),\quad
x\in\Lambda_\Gamma.
\end{equation}
By~\eqref{e:transfer-invariant-2} and~\eqref{e:transfer-power-1}
the integral in~\eqref{e:fourier}
can be written as follows:
\begin{equation}
  \label{e:massage-2}
\int_{\Lambda_\Gamma}f\,d\mu
=\int_{\Lambda_\Gamma}\mathcal L_{Z(\tau)}^{2k+1}f\,d\mu
=\sum_{\mathbf A,\mathbf B\colon\mathbf A\leftrightarrow \mathbf B}
\int_{I_{b(\mathbf A)}}f(\gamma_{\mathbf A*\mathbf B}(x))w_{\mathbf A*\mathbf B}(x)\,d\mu(x).
\end{equation}
We use H\"older's inequality
and approximations for
the weight $w_{\mathbf A*\mathbf B}$ and the amplitude $g$ to get the following bound.
Note that~\eqref{e:Z-tau-prop-2} and~\eqref{e:tau-def} imply
that the function $e^{i\xi\varphi(\gamma_{\mathbf A*\mathbf B}(x))}$
below oscillates at frequencies $\sim \tau^{-1/2}$.
\begin{lemm}
  \label{l:approx1}
We have
\begin{equation}
  \label{e:approx1}
\bigg|\int_{\Lambda_\Gamma}f\,d\mu\bigg|^2\leq
C\tau^{(2k-1)\delta}
\sum_{\mathbf A,\mathbf B\colon\mathbf A\leftrightarrow \mathbf B}
\bigg|\int_{I_{b(\mathbf A)}} e^{i\xi\varphi(\gamma_{\mathbf A*\mathbf B}(x))}w_{\mathbf a_k'}(x)\,d\mu(x)\bigg|^2+
C\tau^2.
\end{equation}
\end{lemm}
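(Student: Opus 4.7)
The strategy is to convert the exact identity \eqref{e:massage-2} into a bound where all $x$-dependent factors except $w_{\mathbf a_k'}$ and the oscillatory exponential have been replaced by constants, then apply Cauchy--Schwarz. The plan has three steps: factor the weight through $\gamma_{\mathbf A*\mathbf B}=\gamma_{\mathbf A\#\mathbf B}\circ\gamma_{\mathbf a_k'}$, replace the remaining slowly varying factors by constants, and apply Cauchy--Schwarz.

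For the factorization, the chain rule (which applies to $|\cdot|_{\mathbb B}$ as well as to the ordinary derivative, as a direct computation confirms) gives
$$w_{\mathbf A*\mathbf B}(x)=W_{\mathbf A,\mathbf B}\bigl(\gamma_{\mathbf a_k'}(x)\bigr)\cdot w_{\mathbf a_k'}(x),\qquad W_{\mathbf A,\mathbf B}(y):=|\gamma_{\mathbf A\#\mathbf B}'(y)|_{\mathbb B}^{\delta}.$$
Since $\gamma_{\mathbf a_k'}(x)\in I_{\mathbf a_k}$ with $|I_{\mathbf a_k}|\le\tau$, the distortion estimate \eqref{e:tree-derivative-2} applied to $\mathbf A\#\mathbf B$ yields $W_{\mathbf A,\mathbf B}(\gamma_{\mathbf a_k'}(x))=\bar W_{\mathbf A,\mathbf B}(1+O(\tau))$, where $\bar W_{\mathbf A,\mathbf B}$ denotes the value of $W_{\mathbf A,\mathbf B}$ at a chosen reference point in $I_{\mathbf a_k}$. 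Next, $\gamma_{\mathbf A*\mathbf B}(x)$ ranges over the interval $I_{\mathbf A*\mathbf B\cdot b(\mathbf A)}$, of size $\lesssim\tau^{2k+1}$ by iterated application of Lemma \ref{l:concatenate}, so the $C^{1}$ bound on $g$ lets us replace $g(\gamma_{\mathbf A*\mathbf B}(x))$ by a constant $\tilde g_{\mathbf A,\mathbf B}$ with pointwise error $\lesssim\tau^{2k+1}$. Setting $c_{\mathbf A,\mathbf B}:=\tilde g_{\mathbf A,\mathbf B}\bar W_{\mathbf A,\mathbf B}$ and $J_{\mathbf A,\mathbf B}:=\int_{I_{b(\mathbf A)}}e^{i\xi\varphi(\gamma_{\mathbf A*\mathbf B}(x))}w_{\mathbf a_k'}(x)\,d\mu(x)$, the pointwise error in each integrand of \eqref{e:massage-2} is bounded by $C\tau\,\bar W_{\mathbf A,\mathbf B}\,w_{\mathbf a_k'}(x)$. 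Summing over $(\mathbf A,\mathbf B)$ and using $\int_{I_{b(\mathbf A)}}w_{\mathbf A*\mathbf B}\,d\mu\sim\bar W_{\mathbf A,\mathbf B}\int_{I_{b(\mathbf A)}}w_{\mathbf a_k'}\,d\mu$ together with the $\mathcal L_{Z(\tau)}$-invariance $\sum_{\mathbf A\leftrightarrow\mathbf B}\int w_{\mathbf A*\mathbf B}\,d\mu=\int\mathcal L_{Z(\tau)}^{2k+1}1\,d\mu=1$ from Lemma \ref{l:transfer}, the total error in replacing the right-hand side of \eqref{e:massage-2} by $\sum_{\mathbf A\leftrightarrow\mathbf B}c_{\mathbf A,\mathbf B}J_{\mathbf A,\mathbf B}$ is bounded by $C\tau$.

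Finally Cauchy--Schwarz gives
$$\bigg|\sum_{\mathbf A\leftrightarrow\mathbf B}c_{\mathbf A,\mathbf B}J_{\mathbf A,\mathbf B}\bigg|^{2}\le\Big(\sum_{\mathbf A\leftrightarrow\mathbf B}|c_{\mathbf A,\mathbf B}|^{2}\Big)\sum_{\mathbf A\leftrightarrow\mathbf B}|J_{\mathbf A,\mathbf B}|^{2}.$$
Iterating \eqref{e:concatenate} across the $2k$ primed words composing $\mathbf A\#\mathbf B$ produces $|\gamma_{\mathbf A\#\mathbf B}'|\sim\tau^{2k}$ on $I_{\mathbf a_k}$, whence $\bar W_{\mathbf A,\mathbf B}\le C\tau^{2k\delta}$. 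Combined with $\sum\bar W_{\mathbf A,\mathbf B}\sim\tau^{-\delta}$, which follows from $\int w_{\mathbf A*\mathbf B}\,d\mu\sim\bar W_{\mathbf A,\mathbf B}\,\tau^{\delta}$ and $\sum\int w_{\mathbf A*\mathbf B}\,d\mu=1$, this gives $\sum|c_{\mathbf A,\mathbf B}|^{2}\le C(\max\bar W_{\mathbf A,\mathbf B})\sum\bar W_{\mathbf A,\mathbf B}\le C\tau^{(2k-1)\delta}$. Squaring the triangle-inequality bound $|\int f\,d\mu|\le|\sum c_{\mathbf A,\mathbf B}J_{\mathbf A,\mathbf B}|+C\tau$ and absorbing constants then yields \eqref{e:approx1}. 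There is no real conceptual obstacle; the delicate point is arithmetic, namely tracking the $\tau$-exponents in the two replacement steps and in the Cauchy--Schwarz weight so that the error on the squared side lands at exactly $C\tau^{2}$.
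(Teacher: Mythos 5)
Your proof is correct and follows essentially the same route as the paper: factor the weight as $w_{\mathbf A*\mathbf B}(x)=w_{\mathbf A\#\mathbf B}(\gamma_{\mathbf a_k'}(x))\,w_{\mathbf a_k'}(x)$, freeze the slowly varying factors $w_{\mathbf A\#\mathbf B}$ and $g$ at reference points with error $O(\tau)$, then apply Cauchy--Schwarz using $\max_{\mathbf A,\mathbf B}|c_{\mathbf A,\mathbf B}|\lesssim\tau^{2k\delta}$ and $\sum|c_{\mathbf A,\mathbf B}|^2\lesssim\tau^{(2k-1)\delta}$. The only cosmetic differences are that you bound the summed error and $\sum\bar W_{\mathbf A,\mathbf B}$ via transfer-operator invariance $\sum\int w_{\mathbf A*\mathbf B}\,d\mu=1$ (and track a sharper $\tau^{2k+1}$ error in the $g$-approximation that is not needed), whereas the paper uses the cruder but simpler count $\#Z(\tau)^{2k+1}\lesssim\tau^{-(2k+1)\delta}$ from~\eqref{e:partition-count}; both give the same $C\tau$ error and the same $\tau^{(2k-1)\delta}$ Cauchy--Schwarz weight.
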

\begin{proof}
Take arbitrary $x\in I_{b(\mathbf A)}$, then
$$
w_{\mathbf A*\mathbf B}(x)=w_{\mathbf A\#\mathbf B}(\gamma_{\mathbf a'_k}(x))
w_{\mathbf a'_k}(x).
$$
Now, $\gamma_{\mathbf a'_k}(x)$ lies in $I_{\mathbf a_k}$,
which by~\eqref{e:Z-tau} is an interval of size no more
than $\tau$.
By~\eqref{e:tree-derivative-2}
\begin{equation}
  \label{e:koala-1}
\exp(-C\tau)\leq {w_{\mathbf A\#\mathbf B}(\gamma_{\mathbf a'_k}(x))
\over w_{\mathbf A\#\mathbf B}(x_{\mathbf a_k})}\leq \exp(C\tau).
\end{equation}
Moreover, by~\eqref{e:weightless} and the chain rule
\begin{equation}
  \label{e:koala-2}
w_{\mathbf A^*\mathbf B}(x)\leq C\tau^{(2k+1)\delta},\quad
w_{\mathbf A\#\mathbf B}(x_{\mathbf a_k})\leq C\tau^{2k\delta}.
\end{equation}
Recall that $\|g\|_{C^1}\leq C$ by~\eqref{e:fourier-hyp-2}.
Since $\gamma_{\mathbf A*\mathbf B}(x)\in I_{\mathbf a_0}$, by~\eqref{e:Z-tau} we have
\begin{equation}
  \label{e:koala-3}
|f(\gamma_{\mathbf A*\mathbf B}(x))-e^{i\xi\varphi(\gamma_{\mathbf A*\mathbf B}(x))}g(x_{\mathbf a_0})|\leq C\tau.
\end{equation}
Put 
$$
g_{\mathbf A,\mathbf B}:=w_{\mathbf A\#\mathbf B}(x_{\mathbf a_k})
g(x_{\mathbf a_0}).
$$
Combining~\eqref{e:koala-1}--\eqref{e:koala-3}, we obtain
$$
|f(\gamma_{\mathbf A*\mathbf B}(x))w_{\mathbf A*\mathbf B}(x)-g_{\mathbf A,\mathbf B}
e^{i\xi\varphi(\gamma_{\mathbf A*\mathbf B}(x))}w_{\mathbf a_k'}(x)|\leq C\tau^{(2k+1)\delta+1}.
$$
Therefore by~\eqref{e:massage-2} and~\eqref{e:Z-tau-cnt}
\begin{equation}
  \label{e:panda-1}
\bigg|\int_{\Lambda_\Gamma}f\,d\mu-\sum_{\mathbf A,\mathbf B\colon\mathbf A\leftrightarrow \mathbf B}
g_{\mathbf A,\mathbf B}\int_{I_{b(\mathbf A)}}e^{i\xi\varphi(\gamma_{\mathbf A*\mathbf B}(x))}
w_{\mathbf a'_k}(x)\,d\mu(x)
\bigg|\leq C\tau.
\end{equation}
Using H\"older's inequality, \eqref{e:Z-tau-cnt}, and~\eqref{e:koala-2}, we get
\begin{equation}
  \label{e:panda-2}
\begin{gathered}
\bigg|\sum_{\mathbf A,\mathbf B\colon\mathbf A\leftrightarrow \mathbf B}
g_{\mathbf A,\mathbf B}\int_{I_{b(\mathbf A)}}e^{i\xi\varphi(\gamma_{\mathbf A*\mathbf B}(x))}
w_{\mathbf a'_k}(x)\,d\mu(x)\bigg|^2
\\\leq
C \tau^{(2k-1)\delta}
\sum_{\mathbf A,\mathbf B\colon\mathbf A\leftrightarrow \mathbf B}
\bigg|\int_{I_{b(\mathbf A)}}e^{i\xi\varphi(\gamma_{\mathbf A*\mathbf B}(x))}
w_{\mathbf a'_k}(x)\,d\mu(x)\bigg|^2.
\end{gathered}
\end{equation}
Combining~\eqref{e:panda-1} and~\eqref{e:panda-2}
finishes the proof.
\end{proof}
To handle the first term on the right-hand side of~\eqref{e:approx1}, we estimate
using~\eqref{e:weightless}
\begin{equation}
  \label{e:square-revealed}
\begin{gathered}
\sum_{\mathbf A,\mathbf B\colon\mathbf A\leftrightarrow \mathbf B}
\bigg|\int_{I_{b(\mathbf A)}} e^{i\xi\varphi(\gamma_{\mathbf A*\mathbf B}(x))}w_{\mathbf a_k'}(x)\,d\mu(x)\bigg|^2\\
=\sum_{\mathbf A}
\int_{I_{b(\mathbf A)}^2} w_{\mathbf a'_k}(x)w_{\mathbf a'_k}(y)\sum_{\mathbf B\colon\mathbf A\leftrightarrow\mathbf B}e^{i\xi(\varphi(\gamma_{\mathbf A*\mathbf B}(x))
-\varphi(\gamma_{\mathbf A*\mathbf B}(y))
)}\,d\mu(x)d\mu(y)\\
\leq 
C\tau^{2\delta}\sum_{\mathbf A}
\int_{I_{b(\mathbf A)}^2} \bigg|\sum_{\mathbf B\colon\mathbf A\leftrightarrow\mathbf B}e^{i\xi(\varphi(\gamma_{\mathbf A*\mathbf B}(x))
-\varphi(\gamma_{\mathbf A*\mathbf B}(y))
)}\bigg|\,d\mu(x)d\mu(y).
\end{gathered}
\end{equation}
The next statement
bounds the integral $\int f\,d\mu$ by
an expression which can be analyzed using Proposition~\ref{l:key-tool},
by linearizing the phase $\varphi$.
Recall the definition~\eqref{e:zeta-def} of $\mathbf \zeta_{j,\mathbf A}(\mathbf b)$.
\begin{lemm}
  \label{l:approx2}
Denote
\begin{equation}
  \label{e:J-tau}
J_\tau:=\{\eta\in\mathbb R\colon \tau^{-1/4}\leq |\eta|\leq C\tau^{-1/2}\}
\end{equation}
where $C$ is sufficiently large.
Then
$$
\bigg|\int_{\Lambda_\Gamma}f\,d\mu\bigg|^2\leq
C\tau^{(2k+1)\delta}\sum_{\mathbf A}\sup_{\eta\in J_\tau}
\bigg|\sum_{\mathbf B\colon\mathbf A\leftrightarrow\mathbf B}e^{2\pi i\eta\zeta_{1,\mathbf A}(\mathbf b_1)
\cdots\zeta_{k,\mathbf A}(\mathbf b_k)}\bigg|+C\tau^{\delta/4}.
$$  
\end{lemm}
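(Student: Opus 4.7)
The plan is to combine the bound from Lemma~\ref{l:approx1} with~\eqref{e:square-revealed} to reach
$$
\Big|\int_{\Lambda_\Gamma}f\,d\mu\Big|^2 \leq C\tau^{(2k+1)\delta}\sum_{\mathbf A}\int_{I_{b(\mathbf A)}^2}\Big|\sum_{\mathbf B\colon\mathbf A\leftrightarrow\mathbf B}e^{i\xi[\varphi(\gamma_{\mathbf A*\mathbf B}(x))-\varphi(\gamma_{\mathbf A*\mathbf B}(y))]}\Big|\,d\mu(x)d\mu(y)+C\tau^2,
$$
and then to split the $(x,y)$-integral into the diagonal region $\{|x-y|\leq c\tau^{1/4}\}$ and its complement, with $c$ a small Schottky-dependent constant. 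On the diagonal I would bound the inner $\mathbf B$-sum trivially by $\#\{\mathbf B\colon\mathbf A\leftrightarrow\mathbf B\}\leq C\tau^{-k\delta}$ (from~\eqref{e:partition-count}), estimate $\mu\otimes\mu(\{|x-y|\leq c\tau^{1/4}\})\leq C\tau^{\delta/4}$ using Lemma~\ref{l:ad-regularity}, and sum over the $\leq C\tau^{-(k+1)\delta}$ choices of $\mathbf A$; together with the prefactor $\tau^{(2k+1)\delta}$ this contributes $C\tau^{\delta/4}$, matching the desired error.

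On the off-diagonal region I would linearize the phase for fixed $(\mathbf A,x,y)$. Grouping $\mathbf A*\mathbf B=(\mathbf a_0'\mathbf b_1')(\mathbf a_1'\mathbf b_2')\cdots(\mathbf a_{k-1}'\mathbf b_k')\,\mathbf a_k'$ and applying the chain rule yields
$$
\gamma'_{\mathbf A*\mathbf B}(y) = \gamma'_{\mathbf a_k'}(y)\prod_{j=1}^k\gamma'_{\mathbf a_{j-1}'\mathbf b_j'}(y_j)
$$
for iterated points $y_j\in I_{\mathbf a_j}$. Applying~\eqref{e:tree-derivative-2} to the word $\mathbf a_{j-1}'\mathbf b_j$ (whose last letter equals the first letter of $\mathbf a_j$, so both $y_j$ and $x_{\mathbf a_j}$ lie in the relevant Schottky interval) lets me replace each $y_j$ by $x_{\mathbf a_j}$ with a $1+O(\tau)$ multiplicative error. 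Combining this with the M\"obius identity $\gamma(x)-\gamma(y)=(x-y)\sqrt{\gamma'(x)\gamma'(y)}$ applied to $\gamma=\gamma_{\mathbf A*\mathbf B}$ (valid because $\gamma_{\mathbf A*\mathbf B}^{-1}(\infty)\in I_{\overline{c}}$ for $c$ the second-to-last letter of $\mathbf a_k$, which is disjoint from $I_{b(\mathbf A)}$), a first-order Taylor expansion of $\varphi$ at $\gamma_{\mathbf A*\mathbf B}(y)$, and replacing $\varphi'(\gamma_{\mathbf A*\mathbf B}(y))$ by $\varphi'(v_0)$ for a fixed $v_0\in I_{\mathbf a_0}$, I expect to obtain
$$
\xi\bigl[\varphi(\gamma_{\mathbf A*\mathbf B}(x))-\varphi(\gamma_{\mathbf A*\mathbf B}(y))\bigr] = 2\pi\eta\prod_{j=1}^k\zeta_{j,\mathbf A}(\mathbf b_j)+O(\tau^{1/2}),
$$
with $\eta := \xi\,\varphi'(v_0)\,\tau^{2k}\sqrt{\gamma'_{\mathbf a_k'}(x)\gamma'_{\mathbf a_k'}(y)}\,(x-y)/(2\pi)$ independent of~$\mathbf B$. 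The main technical obstacle is controlling the quadratic Taylor remainder of $\varphi$: it has size $O(\xi|\gamma_{\mathbf A*\mathbf B}(x)-\gamma_{\mathbf A*\mathbf B}(y)|^2)$, which via the M\"obius identity and $\gamma'_{\mathbf A*\mathbf B}\leq C\tau^{2k+1}$ on $I_{b(\mathbf A)}$ evaluates to $O(\xi\tau^{4k+2})=O(\tau^{2k+1/2})$, rather than the useless $O(\xi\tau^2)$ one gets from $|I_{\mathbf a_0}|\leq\tau$ alone. The remaining errors---from the distortion estimate, from the $(1+O(\tau))^k$ accumulation, and from replacing~$\varphi'$---are of matching $O(\tau^{1/2})$ size by the same arithmetic, using $|\xi|=\tau^{-2k-3/2}$ from~\eqref{e:tau-def} and $\gamma'_{\mathbf a_k'}\sim\tau$ from~\eqref{e:Z-tau-prop-2}.

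From these scalings one verifies $|\eta|\in[\tau^{-1/4},C\tau^{-1/2}]$ precisely when $|x-y|\in[c\tau^{1/4},1]$, using the lower bound $|\varphi'|\geq C^{-1}$ on $\Lambda_\Gamma$ (which transfers to $v_0\in I_{\mathbf a_0}$ up to an $O(\tau)$ perturbation since $I_{\mathbf a_0}\cap\Lambda_\Gamma\neq\emptyset$); thus $\eta\in J_\tau$ throughout the off-diagonal region. Since the $O(\tau^{1/2})$ phase error is uniform in~$\mathbf B$, we get
$$
\Big|\sum_{\mathbf B\colon\mathbf A\leftrightarrow\mathbf B}e^{i\xi[\varphi(\gamma_{\mathbf A*\mathbf B}(x))-\varphi(\gamma_{\mathbf A*\mathbf B}(y))]}\Big|\leq \sup_{\eta\in J_\tau}\Big|\sum_{\mathbf B\colon\mathbf A\leftrightarrow\mathbf B}e^{2\pi i\eta\prod_j\zeta_{j,\mathbf A}(\mathbf b_j)}\Big|+C\tau^{1/2-k\delta}.
$$
Integrating over $(x,y)\in I_{b(\mathbf A)}^2$ (using $\mu\otimes\mu(I_{b(\mathbf A)}^2)\leq 1$), summing over $\mathbf A$, and multiplying by $C\tau^{(2k+1)\delta}$ produces the desired main term; the residual from the linearization error evaluates to $C\tau^{(2k+1)\delta-(k+1)\delta+1/2-k\delta}=C\tau^{1/2}$, which together with the $C\tau^2$ of Lemma~\ref{l:approx1} is absorbed into the $C\tau^{\delta/4}$ error, completing the proof.
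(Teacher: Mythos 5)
Your proposal follows the paper's argument very closely: starting from Lemma~\ref{l:approx1} and~\eqref{e:square-revealed}, splitting the $(x,y)$-integral at scale $\tau^{1/4}$ and bounding the diagonal by $C\tau^{\delta/4}$ via Ahlfors--David regularity, and on the off-diagonal linearizing the phase to $2\pi\eta\prod_j\zeta_{j,\mathbf A}(\mathbf b_j)$ with an $\eta$ depending only on $(\mathbf A,x,y)$ and satisfying $|\eta|\asymp\tau^{-1/2}|x-y|$. Your $\eta$ coincides with the paper's $\frac{\sgn\xi}{2\pi}\tau^{-3/2}\varphi'(x_{\mathbf a_0})(\tilde x-\tilde y)$ once you apply the M\"obius identity to $\gamma_{\mathbf a_k'}$. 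The only real difference is how the linearization is executed: the paper writes $\varphi(\gamma_{\mathbf A*\mathbf B}(y))-\varphi(\gamma_{\mathbf A*\mathbf B}(x))$ as $\int_{\tilde x}^{\tilde y}(\varphi\circ\gamma_{\mathbf A\#\mathbf B})'(t)\,dt$ and bounds the integrand by chain rule plus distortion, whereas you Taylor-expand $\varphi$ and use the exact M\"obius identity $\gamma(x)-\gamma(y)=(x-y)\sqrt{\gamma'(x)\gamma'(y)}$ to extract $x-y$. Both routes produce the same $O(\tau^{1/2})$ phase error and the same $\eta$, so this is a cosmetic variation rather than a genuinely different method.

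One small slip: for the off-diagonal region you need the constant $c$ in the cutoff $|x-y|\geq c\,\tau^{1/4}$ to be \emph{large}, not small. Since $|\eta|\geq C^{-1}\tau^{-1/2}|x-y|$ with $C^{-1}<1$, the implication $|x-y|\geq c\,\tau^{1/4}\Rightarrow|\eta|\geq\tau^{-1/4}$ requires $c\geq C$; the paper correspondingly writes ``for large enough $C_0$''. The diagonal bound $\mu\times\mu\{|x-y|\leq c\,\tau^{1/4}\}\leq C\tau^{\delta/4}$ holds for any bounded $c$, so taking $c$ large costs nothing there. With that fix, the proposal is correct.
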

\begin{proof}
Fix $\mathbf A$.
Take $x,y\in I_{b(\mathbf A)}$ and put
$$
\tilde x:=\gamma_{\mathbf a'_k}(x),\
\tilde y:=\gamma_{\mathbf a'_k}(y)\ \in\ I_{\mathbf a_k}.
$$
Assume that $\mathbf A\leftrightarrow\mathbf B$.
Since $\gamma_{\mathbf A*\mathbf B}(x)=\gamma_{\mathbf A\#\mathbf B}(\tilde x)$,
$\gamma_{\mathbf A*\mathbf B}(y)=\gamma_{\mathbf A\#\mathbf B}(\tilde y)$,
we have
$$
\varphi(\gamma_{\mathbf A*\mathbf B}(y))-\varphi(\gamma_{\mathbf A*\mathbf B}(x))
=\int_{\tilde x}^{\tilde y}(\varphi\circ\gamma_{\mathbf A\#\mathbf B})'(t)\,dt.
$$
By the chain rule, for each $t\in I_{\mathbf a_k}$ there
exist $s_j\in I_{\mathbf a_j}$, $j=0,\dots,k$, such that
$$
(\varphi\circ\gamma_{\mathbf A\#\mathbf B})'(t)
=\varphi'(s_0)\gamma_{\mathbf a_0'\mathbf b_1'}'(s_1)\cdots
\gamma_{\mathbf a_{k-1}'\mathbf b_k'}'(s_k).
$$
By~\eqref{e:Z-tau}, we have $|s_j-x_{\mathbf a_j}|\leq \tau$.
Then by~\eqref{e:fourier-hyp-2} and~\eqref{e:tree-derivative-2}, we have for all 
$t\in I_{\mathbf a_k}$
$$
\exp(-C\tau)\leq{(\varphi\circ\gamma_{\mathbf A\#\mathbf B})'(t)\over
\tau^{2k}\varphi'(x_{\mathbf a_0})\zeta_{1,\mathbf A}(\mathbf b_1)
\cdots\zeta_{k,\mathbf A}(\mathbf b_k)}
\leq \exp(C\tau).
$$
Since $|\varphi'(x_{\mathbf a_0})|,\zeta_{j,\mathbf A}(\mathbf b_j)\in [C^{-1},C]$
and $|\tilde x-\tilde y|\leq \tau$,
it follows that
\begin{equation}
  \label{e:wallaby}
\big|\varphi(\gamma_{\mathbf A*\mathbf B}(x))-\varphi(\gamma_{\mathbf A*\mathbf B}(y))
-\tau^{2k}\varphi'(x_{\mathbf a_0})\zeta_{1,\mathbf A}(\mathbf b_1)
\cdots\zeta_{k,\mathbf A}(\mathbf b_k)(\tilde x-\tilde y)\big|\leq C\tau^{2k+2}.
\end{equation}
Denote
$$
\eta:={\sgn\xi\over 2\pi}\tau^{-3/2}\varphi'(x_{\mathbf a_0})\cdot (\tilde x-\tilde y)
$$
and note that by~\eqref{e:fourier-hyp-2} and~\eqref{e:Z-tau-prop-2}
$$
C^{-1}\tau^{-1/2}|x-y|\leq |\eta|\leq C\tau^{-1/2}|x-y|.
$$
We have by Lemma~\ref{l:approx1}, \eqref{e:square-revealed}, \eqref{e:wallaby},
and~\eqref{e:Z-tau-cnt},
recalling that $|\xi|=\tau^{-2k-3/2}$ by~\eqref{e:tau-def}
$$
\bigg|\int_{\Lambda_\Gamma}f\,d\mu\bigg|^2\leq
C\tau^{(2k+1)\delta}
\sum_{\mathbf A}
\int_{I_{b(\mathbf A)}^2} \bigg|\sum_{\mathbf B\colon\mathbf A\leftrightarrow\mathbf B}e^{2\pi i\eta\zeta_{1,\mathbf A}(\mathbf b_1)
\cdots\zeta_{k,\mathbf A}(\mathbf b_k)}\bigg|\,d\mu(x)d\mu(y)+C\sqrt\tau.
$$
Now, we remark that by~\eqref{e:adreg-1}, for each fixed constant $C_0$
$$
\mu\times\mu\big\{(x,y)\in \Lambda_\Gamma^2\colon |x-y|\leq C_0\tau^{1/4}\big\}
\leq C\tau^{\delta/4}.
$$
Therefore, the double integral above can be taken
over $x,y$ such that $|x-y|\geq C_0\tau^{1/4}$,
which for large enough $C_0$ implies that $\eta\in J_\tau$.
 This finishes
the proof.
\end{proof}

\subsection{End of the proof of Theorem~\ref{t:fourier}}
  \label{s:fourier-2}

To apply Proposition~\ref{l:key-tool} to the sum in Lemma~\ref{l:approx2},
we need a positive box dimension estimate. To state
it we recall the notation $\mathbf a\rightsquigarrow {\mathbf b\atop\mathbf c}
\rightsquigarrow \mathbf d$ from~\eqref{e:quads}
and the constant $\varepsilon_2$ fixed at the beginning of~\S\ref{s:fourier-1}.
\begin{lemm}
  \label{l:box-estimate}
Define the set of \textbf{regular sequences}
$\mathcal R\subset Z(\tau)^{k+1}$ as follows: $\mathbf A\in \mathcal R$
if and only if for all $j=1,\dots,k$ and $\sigma\in [\tau,\tau^{\varepsilon_2/4}]$
we have
\begin{equation}
  \label{e:regular-words}
\tau^{2\delta}\cdot\#\big\{(\mathbf b,\mathbf c)\in Z(\tau)^2\colon
\mathbf a_{j-1}\rightsquigarrow {\textstyle{\mathbf b\atop\mathbf c}}\rightsquigarrow\mathbf a_j,\
|\zeta_{j,\mathbf A}(\mathbf b)-\zeta_{j,\mathbf A}(\mathbf c)|\leq\sigma
\big\}\leq \sigma^{\delta/4}.
\end{equation}
Then most sequences are regular, more precisely 
\begin{equation}
  \label{e:irregular-words}
\tau^{(k+1)\delta}\cdot \#(Z(\tau)^{k+1}\setminus\mathcal R)\leq C\tau^{\varepsilon_2\delta/20}.
\end{equation}
\end{lemm}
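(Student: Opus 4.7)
The proof plan is to bound the count of irregular sequences by a union bound over the index $j$ and over a dyadic discretization of $\sigma$, and at each scale apply Lemma~\ref{l:triple-count} via Markov's inequality.

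First I would reduce to a discrete set of scales. For a bad $\mathbf{A}$, there exist $j \in \{1,\dots,k\}$ and $\sigma \in [\tau, \tau^{\varepsilon_2/4}]$ at which~\eqref{e:regular-words} fails. Choose a dyadic scale $\sigma_m = 2^{-m}$ with $\sigma_m/2 < \sigma \leq \sigma_m$, so the condition $|\zeta_{j,\mathbf{A}}(\mathbf{b}) - \zeta_{j,\mathbf{A}}(\mathbf{c})| \leq \sigma$ implies the same with $\sigma$ replaced by $\sigma_m$; combined with $\sigma^{\delta/4} \geq 2^{-\delta/4} \sigma_m^{\delta/4}$, this shows $\mathbf{A}$ is bad at $(j,\sigma_m)$ in the sense
$$
\#\big\{(\mathbf b, \mathbf c) : \mathbf{a}_{j-1}\rightsquigarrow {\textstyle{\mathbf{b}\atop\mathbf{c}}} \rightsquigarrow\mathbf{a}_j,\ |\zeta_{j,\mathbf A}(\mathbf b) - \zeta_{j,\mathbf A}(\mathbf c)| \leq \sigma_m\big\} > c\, \tau^{-2\delta}\sigma_m^{\delta/4},
$$
for some absolute constant $c > 0$. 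There are only $O(\log(1/\tau))$ such dyadic $\sigma_m \in [\tau, 2\tau^{\varepsilon_2/4}]$.

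Next I would estimate the number of bad $\mathbf{A}$ at each fixed $(j, \sigma_m)$. Since the condition above concerns only the pair $(\mathbf{a}_{j-1}, \mathbf{a}_j)$, the remaining $k-1$ coordinates of $\mathbf{A}$ are free, contributing a factor of $\#(Z(\tau))^{k-1} \leq C\tau^{-(k-1)\delta}$ via~\eqref{e:Z-tau-cnt}. To bound the number of bad pairs $(\mathbf{a}_{j-1}, \mathbf{a}_j)$, fix $\mathbf{a}_{j-1}$ and apply Lemma~\ref{l:triple-count} with $\mathbf{a} = \mathbf{a}_{j-1}$: recalling the definition~\eqref{e:zeta-def} of $\zeta_{j,\mathbf{A}}$, the total count of triples $(\mathbf{b}, \mathbf{c}, \mathbf{d}) \in Z(\tau)^3$ with $\mathbf{a}_{j-1} \rightsquigarrow {\mathbf{b} \atop \mathbf{c}} \rightsquigarrow \mathbf{d}$ and $|\gamma'_{\mathbf{a}_{j-1}'\mathbf{b}'}(x_\mathbf{d}) - \gamma'_{\mathbf{a}_{j-1}'\mathbf{c}'}(x_\mathbf{d})| \leq \tau^2 \sigma_m$ is at most $C_\Gamma \tau^{-3\delta} \sigma_m^{\delta/2}$. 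Taking $\mathbf{d} = \mathbf{a}_j$ and applying Markov's inequality to the bad-pair threshold $c\tau^{-2\delta}\sigma_m^{\delta/4}$, we see that the number of $\mathbf{a}_j$ for which $(\mathbf{a}_{j-1}, \mathbf{a}_j)$ is bad is at most $C_\Gamma \tau^{-\delta} \sigma_m^{\delta/4}$. Summing over the $\leq C\tau^{-\delta}$ choices of $\mathbf{a}_{j-1}$ bounds the number of bad pairs by $C_\Gamma \tau^{-2\delta} \sigma_m^{\delta/4}$, so the number of $\mathbf{A}$ bad at $(j, \sigma_m)$ is at most $C_\Gamma \tau^{-(k+1)\delta} \sigma_m^{\delta/4}$.

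Finally I would combine the pieces. A union bound over $j = 1, \dots, k$ and over the dyadic scales $\sigma_m \in [\tau, 2\tau^{\varepsilon_2/4}]$ yields
$$
\tau^{(k+1)\delta}\cdot \#(Z(\tau)^{k+1}\setminus \mathcal R) \leq C_\Gamma k \sum_{\sigma_m \leq 2\tau^{\varepsilon_2/4}} \sigma_m^{\delta/4} \leq C \tau^{\varepsilon_2 \delta/16},
$$
where the sum is geometric with ratio $2^{-\delta/4} < 1$ and is therefore controlled by its largest term. Since $\varepsilon_2\delta/16 \geq \varepsilon_2\delta/20$, this gives~\eqref{e:irregular-words}. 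No real obstacle arises: the argument is a clean Markov-plus-dyadic union bound, with Lemma~\ref{l:triple-count} doing the heavy lifting by simultaneously summing over $\mathbf{d} = \mathbf{a}_j$, which is exactly the summation we need to free the $\mathbf{a}_j$ coordinate.
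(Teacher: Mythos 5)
Your proposal is correct and follows essentially the same route as the paper: a dyadic decomposition of $\sigma$, a Chebyshev/Markov bound against the quadruple count supplied by Lemma~\ref{l:triple-count}, and a union bound over $j$ and the dyadic scales. The only cosmetic difference is that you apply Markov with $\mathbf a_{j-1}$ fixed and then sum over $\mathbf a_{j-1}$, whereas the paper applies Chebyshev directly to the count of quadruples $(\mathbf a,\mathbf b,\mathbf c,\mathbf d)$; these are the same computation.
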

\begin{proof}
For $\ell\in\mathbb Z$ with $\tau\leq 2^{-\ell}\leq 2\tau^{\varepsilon_2/4}$,
define $\widetilde{\mathcal R}_\ell$ as the set of pairs
$(\mathbf a,\mathbf d)\in Z(\tau)^2$ such that
$$
\tau^{2\delta}\cdot\#\big\{(\mathbf b,\mathbf c)\in Z(\tau)^2\colon
\mathbf a\rightsquigarrow{\textstyle{\mathbf b\atop\mathbf c}}\rightsquigarrow\mathbf d,\
|\gamma'_{\mathbf a'\mathbf b'}(x_{\mathbf d}) - \gamma'_{\mathbf a'\mathbf c'}(x_{\mathbf d})| \leq \tau^2 2^{-\ell}
\big\}\leq 2^{-(\ell+1)\delta/4}.
$$
For each $\sigma\in [\tau,\tau^{\varepsilon_2/4}]$,
there exists $\ell$ such that $2^{-\ell-1}\leq\sigma\leq 2^{-\ell}$.
By~\eqref{e:zeta-def},
$$
\bigcap_j\bigcap_\ell \big\{\mathbf A\in Z(\tau)^{k+1}\mid (\mathbf a_{j-1},\mathbf a_j)\in \widetilde{\mathcal R}_\ell\big\}\subset \mathcal R.
$$
It suffices to show that for each $j,\ell$ we have
\begin{equation}
  \label{e:dc}
\tau^{2\delta}\cdot\#(Z(\tau)^2\setminus\widetilde{\mathcal R}_\ell)\leq C\tau^{\varepsilon_2\delta/16}.
\end{equation}
By Chebyshev's inequality the left-hand side of~\eqref{e:dc} is bounded above by
$$
2^{(\ell+1)\delta/4}\tau^{4\delta}\cdot\#\{(\mathbf a,\mathbf b,\mathbf c,\mathbf d)\in Z(\tau)^4\colon
\mathbf a\rightsquigarrow{\textstyle{\mathbf b\atop\mathbf c}}\rightsquigarrow\mathbf d,\
|\gamma'_{\mathbf a'\mathbf b'}(x_{\mathbf d}) - \gamma'_{\mathbf a'\mathbf c'}(x_{\mathbf d})| \leq \tau^2 2^{-\ell}\}
$$
By Lemma~\ref{l:triple-count} this is bounded above by
$$
C2^{-\delta\ell/4}\leq C\tau^{\varepsilon_2\delta/16}.
$$
This gives~\eqref{e:dc}, finishing the proof.
\end{proof}
We are now ready to finish the proof of Theorem~\ref{t:fourier}.
Using Lemma~\ref{l:approx2}
and estimating the sum over $\mathbf A\in Z(\tau)^{k+1}\setminus \mathcal R$
by Lemma~\ref{l:box-estimate}, we obtain
\begin{equation}
  \label{e:endgame-1}
\bigg|\int_{\Lambda_\Gamma}f\,d\mu\bigg|^2
\leq C\tau^{k\delta}\max_{\mathbf A\in\mathcal R}\sup_{\eta\in J_\tau}
\bigg|\sum_{\mathbf B\colon \mathbf A\leftrightarrow\mathbf B}
e^{2\pi i\eta\zeta_{1,\mathbf A}(\mathbf b_1)\cdots\zeta_{k,\mathbf A}(\mathbf b_k)}\bigg|
+C\tau^{\varepsilon_2\delta/20}.
\end{equation}
We estimate the first term on the right-hand side using Proposition~\ref{l:key-tool}.
Fix $\mathbf A\in \mathcal R$ and define
$$
\mathcal Z_j:=\{\mathbf b\in Z(\tau)\colon
\mathbf a_{j-1}\rightsquigarrow\mathbf b\rightsquigarrow \mathbf a_j\},\quad
j=1,\dots,k.
$$
By~\eqref{e:partition-count},
$$
\#(\mathcal Z_j)\leq CN_{\mathcal Z},\quad
N_{\mathcal Z}:=\tau^{-\delta}.
$$
Fix $\eta\in J_\tau$. Recall that $\delta_0=\delta/4$.
By~\eqref{e:J-tau} and~\eqref{e:regular-words} we have
for all $j=1,\dots,k$ and $\sigma\in \big[|\eta|^{-1},|\eta|^{-\varepsilon_2}\big]$
$$
\#\big\{(\mathbf b,\mathbf c)\in \mathcal Z_j^2\colon
|\zeta_{j,\mathbf A}(\mathbf b)-\zeta_{j,\mathbf A}(\mathbf c)|\leq\sigma
\big\}\leq N_{\mathcal Z}^2\cdot\sigma^{\delta_0}.
$$
Therefore, condition~\eqref{e:boxdim} is satisfied.
We also recall from~\eqref{e:zeta-def} that
$\zeta_{j,\mathbf A}(\mathbf b)\in [C^{-1},C]$.

Applying Proposition~\ref{l:key-tool}, we obtain
for all $\mathbf A\in\mathcal R$ and $\eta\in J_\tau$
\begin{equation}
  \label{e:endgame-2}
\tau^{k\delta}\bigg|\sum_{\mathbf B\colon\mathbf A\leftrightarrow\mathbf B}e^{2\pi i\eta\zeta_{1,\mathbf A}(\mathbf b_1)
\cdots\zeta_{k,\mathbf A}(\mathbf b_k)}\bigg|
\leq C|\eta|^{-\varepsilon_2}
\leq C\tau^{\varepsilon_2/4}.
\end{equation}
From~\eqref{e:endgame-1} and \eqref{e:endgame-2} we have
$$
\bigg|\int_{\Lambda_\Gamma}f\,d\mu\bigg|\leq C\tau^{\varepsilon_2\delta/40}.
$$
Recalling~\eqref{e:tau-def}
and the definition~\eqref{e:the-f} of $f$,
this gives Theorem~\ref{t:fourier} with
\begin{equation}
  \label{e:epsilon-1-revealed}
\varepsilon_1:={\varepsilon_2\delta\over 40(2k+3/2)}.
\end{equation}

\section{Fractal uncertainty principle}
  \label{s:fup}

In this section, we deduce Theorem~\ref{t:gap} from Theorem~\ref{t:fourier}
by establishing a fractal uncertainty principle (henceforth denoted
FUP) and using the results of~\cite{hgap}.
Throughout this section we assume that $M,\delta,\Lambda_\Gamma,\mu$
are as in Theorem~\ref{t:fourier}.

\subsection{FUP for the Patterson--Sullivan measure}

We first use Theorem~\ref{t:fourier} to obtain a fractal uncertainty
principle with respect to the Patterson--Sullivan measure $\mu$:
\begin{prop}
  \label{l:fup-measure}
Assume that:
\begin{itemize}
\item $U\subset\mathbb R^2$ is an open set and $V\subset U$ is compact;
\item $\Phi\in C^3(U;\mathbb R)$ and $G\in C^1(U;\mathbb C)$, $\supp G\subset V$,
satisfy for some constant $C_{\Phi,G}$
\begin{equation}
  \label{e:fup-measure-0}
\|\Phi\|_{C^3}+\|G\|_{C^1}\leq C_{\Phi, G},\quad
\inf |\partial^2_{xy}\Phi|\geq C_{\Phi, G}^{-1}.
\end{equation}
\end{itemize}
Define for $0<h<1$ the operator $B(h):L^2(\Lambda_\Gamma;\mu)\to L^2(\Lambda_\Gamma;\mu)$ by
\begin{equation}
  \label{e:B-h-def}
B(h) u(x)=\int_{\Lambda_\Gamma} \exp\Big({i\Phi(x,y)\over h}\Big)G(x,y)u(y)\,d\mu(y).
\end{equation}
Let $\varepsilon_1=\varepsilon_1(\delta)>0$ be the constant from Theorem~\ref{t:fourier}. Then
\begin{equation}
  \label{e:fup-measure}
\|B(h)\|_{L^2(\Lambda_\Gamma;\mu)\to L^2(\Lambda_\Gamma;\mu)}\leq
Ch^{\varepsilon_1/4},\quad
0<h<1
\end{equation}
where the constant $C$ depends only on $M,U,V,C_{\Phi, G}$.
\end{prop}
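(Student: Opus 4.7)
The plan is to reduce the estimate to Theorem~\ref{t:fourier} via a standard $TT^*$ argument combined with Schur's test. Consider $B(h)B(h)^*$, whose integral kernel with respect to $\mu\otimes\mu$ is
\[
K(x,x')=\int_{\Lambda_\Gamma} G(x,y)\overline{G(x',y)}
\exp\Big({i(\Phi(x,y)-\Phi(x',y))\over h}\Big)\,d\mu(y).
\]
Since $K(x,x')=\overline{K(x',x)}$, Schur's test gives
$\|B(h)\|^2=\|B(h)B(h)^*\|\leq \sup_{x}\int |K(x,x')|\,d\mu(x')$,
so it suffices to bound this quantity by $Ch^{\varepsilon_1/2}$.

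The next step is to exhibit the kernel as an oscillatory integral of the form treated in Theorem~\ref{t:fourier}. Factor
\[
\Phi(x,y)-\Phi(x',y)=(x-x')\,\varphi_{x,x'}(y),\quad
\varphi_{x,x'}(y):=\int_0^1 \partial_x\Phi\bigl(x'+t(x-x'),y\bigr)\,dt,
\]
and set $g_{x,x'}(y):=G(x,y)\overline{G(x',y)}$ and $\xi:=(x-x')/h$. The hypothesis~\eqref{e:fup-measure-0} yields
\[
\|\varphi_{x,x'}\|_{C^2(\mathbb R)}\leq C_{\Phi,G},\quad
\|g_{x,x'}\|_{C^1(\mathbb R)}\leq C_{\Phi,G}^2,\quad
\inf_{\mathbb R}|\varphi_{x,x'}'|\geq \inf|\partial^2_{xy}\Phi|\geq C_{\Phi,G}^{-1},
\]
all uniform in $x,x'$, so a single constant $C_{\varphi,g}$ (depending only on $C_{\Phi,G}$) works. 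Then for $|x-x'|>h$, i.e.\ $|\xi|>1$, Theorem~\ref{t:fourier} applied to $(\varphi_{x,x'},g_{x,x'})$ yields
\[
|K(x,x')|\leq C\,|\xi|^{-\varepsilon_1}=C\,\biggl({h\over |x-x'|}\biggr)^{\!\varepsilon_1}.
\]

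Now I would bound $\int |K(x,x')|\,d\mu(x')$ by splitting at the scale $h^{1/2}$. For $|x-x'|\leq h^{1/2}$ I use the trivial bound $|K|\leq \|G\|_\infty^2\leq C$ and Ahlfors--David regularity (Lemma~\ref{l:ad-regularity}), which gives $\mu(\{x':|x-x'|\leq h^{1/2}\})\leq Ch^{\delta/2}$. For $|x-x'|>h^{1/2}$ I apply the Fourier decay bound just proved and perform a dyadic decomposition into annuli $\{r<|x-x'|\leq 2r\}$ with $h^{1/2}<r\lesssim 1$ (here $V$ is compact); on each annulus Ahlfors--David gives $\mu$-measure $\leq Cr^\delta$ while $|K|\leq C(h/r)^{\varepsilon_1}$, yielding a contribution $C h^{\varepsilon_1}r^{\delta-\varepsilon_1}$ whose dyadic sum is controlled by $Ch^{\varepsilon_1}$ (since $\varepsilon_1<\delta$ by~\eqref{e:epsilon-1-revealed}, the geometric series is dominated by the term $r\sim 1$). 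Combining the two regimes,
\[
\sup_x\int|K(x,x')|\,d\mu(x')\leq C\bigl(h^{\delta/2}+h^{\varepsilon_1}\bigr)\leq C h^{\varepsilon_1/2},
\]
again using $\varepsilon_1\leq\delta/2$. Taking square roots gives the claim, with plenty of room over the stated exponent~$\varepsilon_1/4$.

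The only genuine subtlety is the verification that the factored phase $\varphi_{x,x'}$ satisfies the hypotheses of Theorem~\ref{t:fourier} uniformly in $(x,x')$: the lower bound on $|\varphi_{x,x'}'|$ requires exactly the nondegeneracy assumption $|\partial^2_{xy}\Phi|\geq C_{\Phi,G}^{-1}$ in~\eqref{e:fup-measure-0}, and the $C^2$ bound on $\varphi_{x,x'}$ requires $\Phi\in C^3$. Beyond that, the argument is a routine $TT^*$--Schur--Ahlfors--David computation, with the Fourier decay bound of Theorem~\ref{t:fourier} doing all the real work.
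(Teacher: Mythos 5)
Your proof proposal is essentially identical to the paper's: the same $TT^*$/Schur reduction, the same factorization $\Phi(x,y)-\Phi(x',y)=(x-x')\varphi_{x,x'}(y)$ feeding into Theorem~\ref{t:fourier}, and the same split of the Schur integral at the scale $|x-x'|\sim h^{1/2}$. Two small points worth flagging. First, your integral formula $\varphi_{x,x'}(y)=\int_0^1\partial_x\Phi(x'+t(x-x'),y)\,dt$ tacitly requires the segment $\{(x'+t(x-x'),y):t\in[0,1]\}$ to lie in $U$, which an arbitrary open $U$ does not guarantee; the paper handles this by first using a partition of unity for $G$ to reduce to the case where $U$ is a product of open intervals (and then also extends $\varphi_{x,x'},g_{x,x'}$ from that product to compactly supported functions on all of $\mathbb R$ so that the hypotheses of Theorem~\ref{t:fourier} are literally met). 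You gloss over both steps; they are routine but should be stated. Second, in the regime $|x-x'|>h^{1/2}$ you set up a dyadic decomposition summing $h^{\varepsilon_1}r^{\delta-\varepsilon_1}$; the paper simply observes that $(h/|x-x'|)^{\varepsilon_1}\le h^{\varepsilon_1/2}$ there and integrates against the probability measure $\mu$, which is both shorter and avoids the need to invoke $\varepsilon_1<\delta$. Your version actually yields the slightly better exponent $h^{\varepsilon_1}$ in that regime, but since the statement only asks for $h^{\varepsilon_1/4}$, the extra precision is not needed.
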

\begin{proof}
We denote by $C$ constants which depend only on
$M,U,V,C_{\Phi, G}$. As in~\S\ref{s:schottky},
we view $\Lambda_\Gamma$ as a subset of $\mathbb R$.
Using a partition of unity for $G$, we reduce to the case
$$
U=I_1^\circ\times I_2^\circ,\quad
V=J_1\times J_2,\quad
J_1\subset I_1^\circ,\quad
J_2\subset I_2^\circ
$$
for some intervals $I_1,I_2,J_1,J_2$.
To prove~\eqref{e:fup-measure} suffices to show that
\begin{equation}
  \label{e:fup-measure-1}
\|B(h)B(h)^*\|_{L^2(\Lambda_\Gamma;\mu)\to L^2(\Lambda_\Gamma;\mu)}\leq Ch^{\varepsilon_1/2}.
\end{equation}
Note that $B(h)B(h)^*$ is an integral operator:
$$
B(h)B(h)^* f(x)=\int_{\Lambda_\Gamma} \mathcal K(x,x')f(x')\,d\mu(x'),
$$
where
$$
\mathcal K(x,x')=\int_{\Lambda_\Gamma}\exp\Big({i\over h}\big(\Phi(x,y)-\Phi(x',y)\big)\Big)
G(x,y)\overline{G(x',y)}\,d\mu(y).
$$
By Schur's inequality, to show~\eqref{e:fup-measure-1} it suffices to prove the bound
\begin{equation}
  \label{e:schur}
\sup_{x\in\Lambda_\Gamma}
\int_{\Lambda_\Gamma} |\mathcal K(x,x')|\,d\mu(x')\leq Ch^{\varepsilon_1/2}.
\end{equation}
For $x,x'\in\Lambda_\Gamma\cap J_1$, define the functions
$\varphi_{xx'}$, $g_{xx'}$ on $I_2^\circ$ as follows:
$$
\Phi(x,y)-\Phi(x',y)=(x-x')\cdot \varphi_{xx'}(y),\quad
g_{xx'}(y)=G(x,y)\overline{G(x',y)}.
$$
Then
\begin{equation}
  \label{e:fup-measure-2}
\mathcal K(x,x')
=\int_{\Lambda_\Gamma} \exp\big(i\xi\varphi_{xx'}(y)\big)g_{xx'}(y)\,d\mu(y),\quad
\xi:={x-x'\over h}.
\end{equation}
It follows from~\eqref{e:fup-measure-0} that
$$
\|\varphi_{xx'}\|_{C^2(I_2^\circ)}+
\|g_{xx'}\|_{C^1(I_2^\circ)}\leq C,\quad
\inf_{I_2^\circ} |\partial_y\varphi_{xx'}|\geq C^{-1}
$$
and we extend $g_{xx'},\varphi_{xx'}$ to compactly supported functions on $\mathbb R$ so that
$$
\|\varphi_{xx'}\|_{C^2(\mathbb R)}+
\|g_{xx'}\|_{C^1(\mathbb R)}
\leq C,\quad
\inf_{\Lambda_\Gamma} |\partial_y\varphi_{xx'}|\geq C^{-1};
$$
this is possible since $\Lambda_\Gamma\subset\mathbb R$ is compact.

Applying Theorem~\ref{t:fourier} and using~\eqref{e:fup-measure-2} we get the bound
\begin{equation}
  \label{e:fourier-used}
|\mathcal K(x,x')|\leq C\Big|{x-x'\over h}\Big|^{-\varepsilon_1},\quad
x,x'\in \Lambda_\Gamma\cap J_1,\quad
|x-x'|\geq h.
\end{equation}
It remains to split the integral in~\eqref{e:schur} into two parts.
The integral over $\{|x-x'|\leq h^{1/2}\}$ is
bounded by $Ch^{\delta/2}$ by~\eqref{e:adreg-1}.
The integral over $\{|x-x'|\geq h^{1/2}\}$ is
bounded by $Ch^{\varepsilon_1/2}$ by~\eqref{e:fourier-used}.
\end{proof}

\subsection{FUP for the Lebesgue measure}

We now deduce from Proposition~\ref{l:fup-measure}
a fractal uncertainty principle with respect
to Lebesgue measure on a neighborhood
$$
\Lambda_\Gamma(h):=\Lambda_\Gamma+[-h,h]\subset\mathbb R
$$
of $\Lambda_\Gamma$. We use the following
\begin{lemm}
  \label{l:measure-convolved}
For $0<h<1$, define the function $F_h(x)$ as the convolution
of the Patterson--Sullivan measure $\mu$ with the rescaled
uniform measure on $[-2h,2h]$:
\begin{equation}
  \label{e:F-h}
F_h(x):={1\over 4h^\delta}\mu\big([x-2h,x+2h]\big).
\end{equation}
Then for some constant $C_\Gamma>0$ depending only on $\Gamma$,
\begin{equation}
  \label{e:F-h-bdd}
F_h\geq C_\Gamma^{-1}\quad\text{on }\Lambda_\Gamma(h).
\end{equation}
\end{lemm}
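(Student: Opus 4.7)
The plan is to reduce the bound to the lower Ahlfors--David regularity estimate from Lemma~\ref{l:ad-regularity}. Fix $x\in\Lambda_\Gamma(h)$. By definition of $\Lambda_\Gamma(h)$, there exists $y\in\Lambda_\Gamma$ with $|x-y|\leq h$, and then the triangle inequality gives the inclusion
\[
[y-h,\,y+h]\ \subset\ [x-2h,\,x+2h],
\]
so $\mu\bigl([x-2h,x+2h]\bigr)\geq \mu\bigl([y-h,y+h]\bigr)$.

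Next, I would apply the lower bound \eqref{e:adreg-2} from Lemma~\ref{l:ad-regularity} to the interval $I=[y-h,y+h]$, which has length $|I|=2h$ and is centered at the point $y\in\Lambda_\Gamma$. Provided $2h\leq 1$, this gives
\[
\mu\bigl([y-h,y+h]\bigr)\ \geq\ C_\Gamma^{-1}(2h)^\delta.
\]
Dividing by $4h^\delta$ as in the definition \eqref{e:F-h} of $F_h$ yields $F_h(x)\geq 2^\delta/(4C_\Gamma)$, which is a lower bound of the required form (after absorbing the harmless factor $2^\delta/4$ into $C_\Gamma$).

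The only loose end is the regime $h\in [1/2,1)$, where Lemma~\ref{l:ad-regularity} does not directly apply. In this range I would argue separately: the limit set $\Lambda_\Gamma$ is contained in a bounded set (the union of the intervals $I_a$, $a\in\mathcal A$), so for $h$ bounded away from $0$ the interval $[x-2h,x+2h]$ already captures a fixed positive fraction of the probability measure $\mu$, giving a trivial lower bound $F_h(x)\geq C_\Gamma^{-1}$ with a constant depending only on the Schottky data. I do not expect any serious obstacle here; the substantive content is entirely contained in Lemma~\ref{l:ad-regularity}, and the present lemma is essentially a restatement of the lower Ahlfors--David bound averaged against a small interval.
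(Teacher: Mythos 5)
Your argument is exactly the paper's proof: pick $x_0\in\Lambda_\Gamma$ with $|x-x_0|\leq h$, observe $[x_0-h,x_0+h]\subset[x-2h,x+2h]$, and apply the lower Ahlfors--David bound \eqref{e:adreg-2}. The only difference is that you explicitly handle the regime $h\in(1/2,1)$ where $|I|=2h>1$ falls outside the hypothesis of Lemma~\ref{l:ad-regularity}; the paper passes over this silently, and your fix (or simply shrinking to $[x_0-1/2,x_0+1/2]$) closes that cosmetic gap correctly.
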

\begin{proof}
Let $x\in \Lambda_\Gamma(h)$. Then there exists
$x_0\in\Lambda_\Gamma$ such that $|x-x_0|\leq h$.
We have $[x_0-h,x_0+h]\subset [x-2h,x+2h]$
and $\mu([x_0-h,x_0+h])\geq C_\Gamma^{-1}h^\delta$
by~\eqref{e:adreg-2}.
Therefore $F_h(x)\geq C_\Gamma^{-1}$.
\end{proof}
Our fractal uncertainty principle for the Lebesgue measure is the following
\begin{prop}
  \label{l:fup}
Let $\varepsilon_1=\varepsilon_1(\delta)>0$ be the constant from Theorem~\ref{t:fourier}.
Assume that $U,V,\Phi,G$ are as in Proposition~\ref{l:fup-measure}.
Define the operator $\mathcal B(h):L^2(\mathbb R)\to L^2(\mathbb R)$ by
\begin{equation}
  \label{e:B-h-def-2}
\mathcal B(h)u(x)=(2\pi h)^{-1/2}\int_{\mathbb R}\exp\Big({i\Phi(x,y)\over h}\Big)G(x,y)u(y)\,dy.
\end{equation}
Fix $\rho\in (0,1)$. Then
\begin{equation}
  \label{e:fup}
\|\indic_{\Lambda_\Gamma(h^\rho)}\mathcal B(h)\indic_{\Lambda_\Gamma(h^\rho)}\|_{L^2(\mathbb R)\to L^2(\mathbb R)}
\leq Ch^{\beta-(1-\delta)(1-\rho)},\quad
\beta:={1\over 2}-\delta+{\varepsilon_1\over 4}. 
\end{equation}
\end{prop}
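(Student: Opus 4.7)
The plan is to derive Proposition~\ref{l:fup} from Proposition~\ref{l:fup-measure} via a $TT^*$ Schur-test argument combined with the Lebesgue-to-$\mu$ comparison supplied by Lemma~\ref{l:measure-convolved}. Set $T := \indic_{\Lambda_\Gamma(h^\rho)}\mathcal{B}(h)\indic_{\Lambda_\Gamma(h^\rho)}$; I would bound $\|T\|^2 = \|TT^*\|$ via Schur, obtaining $\|TT^*\| \leq (2\pi h)^{-1}\sup_x \int_{\Lambda_\Gamma(h^\rho)} |L(x,x')|\,dx'$ with
$$L(x,x') := \int_{\Lambda_\Gamma(h^\rho)} e^{i(\Phi(x,y)-\Phi(x',y))/h}G(x,y)\overline{G(x',y)}\,dy.$$
Writing $\Phi(x,y)-\Phi(x',y) = (x-x')\varphi_{xx'}(y)$, the nondegeneracy of $\partial^2_{xy}\Phi$ yields $|\partial_y \varphi_{xx'}|\geq C^{-1}$, so $L$ is an oscillatory integral at frequency $\xi := (x-x')/h$. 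The naive bound is $|L(x,x')|\leq C h^{\rho(1-\delta)}$; the main task is to extract the Fourier-decay refinement $|L(x,x')|\leq C h^{\rho(1-\delta)}(1+|\xi|)^{-\varepsilon_1}$ via Theorem~\ref{t:fourier}.

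To use Theorem~\ref{t:fourier} (which integrates against $\mu$), I would convert the Lebesgue integration over $\Lambda_\Gamma(h^\rho)$ to integration against $\mu$ using Lemma~\ref{l:measure-convolved}. Since $F_{h^\rho}\geq C_\Gamma^{-1}$ on $\Lambda_\Gamma(h^\rho)$ and $F_{h^\rho}(y)\,dy = h^{\rho(1-\delta)}(\mu * \psi_{h^\rho})(y)\,dy$ with $\psi_{h^\rho}(t) := (4h^\rho)^{-1}\indic_{[-2h^\rho,2h^\rho]}(t)$, Fubini yields
$$L(x,x') = h^{\rho(1-\delta)}\int \tilde L(x,x',z)\,d\mu(z),$$
where $\tilde L(x,x',z)$ averages the oscillatory integrand over $y\in[z-2h^\rho, z+2h^\rho]$. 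A Taylor expansion of $\varphi_{xx'}$ at $y=z$ splits $\tilde L = e^{i\xi\varphi_{xx'}(z)}A(x,x',z) + O(|\xi|h^\rho)$, and applying Theorem~\ref{t:fourier} to the outer $d\mu$-integral yields the claimed $|\xi|^{-\varepsilon_1}$ decay.

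Finally, the Schur integration splits into near-diagonal $|x-x'|\leq h^{1/2}$ and far-diagonal $|x-x'| > h^{1/2}$ regions. In the near-diagonal region the naive bound combined with the AD-regularity estimate $|\Lambda_\Gamma(h^\rho)\cap B(x, h^{1/2})|\leq Ch^{\rho(1-\delta)+\delta/2}$ contributes $Ch^{-1+2\rho(1-\delta)+\delta/2}$, within the target since $\delta > \varepsilon_1$. In the far-diagonal region the Fourier-decay bound together with a dyadic decomposition in $|x-x'|$ (using AD-regularity estimates for $|\Lambda_\Gamma(h^\rho) \cap B(x, r)|$) contributes $Ch^{-1+2\rho(1-\delta)+\varepsilon_1/2}$; taking a square root recovers~\eqref{e:fup}. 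The main obstacle will be the Fourier-decay bound for $|L|$: the smoothed amplitude $A(x,x',z)$ inherits a $C^1$-norm of order $h^{-\rho}$ from the reduction, so directly invoking Theorem~\ref{t:fourier} is too lossy. A more delicate two-scale argument, treating $|\xi|h^\rho\leq 1$ (where the Taylor approximation alone suffices) and $|\xi|h^\rho > 1$ (where an additional integration-by-parts step in the inner smoothing integral is required) separately, or a rescaled smoothing kernel, will be needed to close the argument cleanly.
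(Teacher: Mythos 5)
Your approach deviates substantially from the paper's: rather than invoking Proposition~\ref{l:fup-measure} as an operator-norm black box, you redo a $TT^*$/Schur argument directly on the Lebesgue-space operator $T$, converting the resulting kernel $L(x,x')$ into a $\mu$-integral via Lemma~\ref{l:measure-convolved} and feeding that into Theorem~\ref{t:fourier}. (Despite the opening sentence of your plan, Proposition~\ref{l:fup-measure} itself is never actually used.) The paper never reopens the Schur estimate at this stage.

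The conversion step contains a genuine error. Your claimed Fubini identity $L(x,x')=h^{\rho(1-\delta)}\int\tilde L(x,x',z)\,d\mu(z)$ would hold if the weight on the inner $y$-integral were $F_{h^\rho}(y)=h^{\rho(1-\delta)}(\mu*\psi_{h^\rho})(y)$; but by your own definition the weight is $\indic_{\Lambda_\Gamma(h^\rho)}(y)$, and these are different functions. The pointwise comparison $\indic_{\Lambda_\Gamma(h^\rho)}\leq C\,F_{h^\rho}$ cannot be applied termwise inside an oscillatory integral (it only bounds $\int F|\cdots|$, which destroys the cancellation the whole estimate relies on). Even if you restart the computation with $F_{h^\rho}$-weighted operators, the $C^1$-norm blowup of the smoothed amplitude that you correctly diagnose remains, and the proposed two-scale fix is left as a hope rather than an argument.

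The paper circumvents both obstacles by doing the Lebesgue-to-$\mu$ comparison at the \emph{operator} level, where the positivity of $|\cdot|^2$ lets $\indic_{\Lambda_\Gamma(h^\rho)}\leq CF_{h^\rho}$ be applied harmlessly to $L^2$-norms. It then uses the exact disintegration
\begin{equation*}
\|\sqrt{F_{h^\rho}}\,v\|^2_{L^2(\mathbb R)}={1\over 4h^{\rho\delta}}\int_{-2h^\rho}^{2h^\rho}\|\omega_t v\|^2_{L^2(\Lambda_\Gamma;\mu)}\,dt,
\end{equation*}
with $\omega_t$ the shift, and writes $\omega_t\mathcal B(h)F_{h^\rho}u$ as an average over $s\in[-2h^\rho,2h^\rho]$ of operators $B_{ts}(h)\omega_su$, each $B_{ts}(h)$ being a translated copy of $B(h)$ to which Proposition~\ref{l:fup-measure} applies with uniform constants. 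No oscillatory kernel is ever converted from Lebesgue to $\mu$, no smoothing window is differentiated, and Theorem~\ref{t:fourier} is not invoked directly at this stage. I would recommend abandoning the direct kernel-level Schur test here and instead proving and then applying Proposition~\ref{l:fup-measure} as an operator bound, transferring it to Lebesgue measure via the shift-operator disintegration.
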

\begin{proof}
Let $F_{h^\rho}$ be the function defined in~\eqref{e:F-h}, with $h$ replaced by $h^\rho$.
By~\eqref{e:F-h-bdd}, it is enough to show the following estimate
for each bounded Borel function $u$ on $\mathbb R$:
\begin{equation}
  \label{e:fup-int-1}
\|\sqrt{F_{h^\rho}}\mathcal B(h) F_{h^\rho}u\|_{L^2(\mathbb R)}\leq Ch^{\beta-(1-\delta)(1-\rho)}\|\sqrt{F_{h^\rho}}\,u\|_{L^2(\mathbb R)}.
\end{equation}
Define the shift operator $\omega_t$ on functions on $\mathbb R$ by
$$
\omega_t v(x)=v(x-t),\quad
t,x\in\mathbb R.
$$
Then for each bounded Borel function $v$ on $\mathbb R$,
$$
\|\sqrt{F_{h^\rho}}\,v\|^2_{L^2(\mathbb R)}={1\over 4h^{\rho\delta}}
\int_{-2h^\rho}^{2h^\rho}\|\omega_t v\|_{L^2(\Lambda_\Gamma;\mu)}^2\,dt.
$$
Moreover
$$
\omega_t\mathcal B(h)F_{h^\rho}u={1\over 4\sqrt{2\pi}h^{1/2+\rho\delta}}
\int_{-2h^\rho}^{2h^\rho} B_{ts}(h)\omega_s u\,ds
$$
where
$$
B_{ts}(h)v(x)=\int_{\Lambda_\Gamma} \exp\Big({i\Phi(x-t,y-s)\over h}\Big)G(x-t,y-s)v(y)\,d\mu(y).
$$
By Proposition~\ref{l:fup-measure}, we have for all $t,s\in [-2h^{\rho},2h^{\rho}]$,
$$
\|B_{ts}(h)\|_{L^2(\Lambda_\Gamma;\mu)\to L^2(\Lambda_\Gamma;\mu)}\leq Ch^{\varepsilon_1/4}.
$$
Then
$$
\begin{aligned}
\|\sqrt{F_{h^\rho}}\mathcal B(h) F_{h^\rho}u\|_{L^2(\mathbb R)}^2 &
={1\over 4h^{\rho\delta}}
\int_{-2h^\rho}^{2h^\rho}\|\omega_t\mathcal B(h)F_{h^\rho}u\|_{L^2(\Lambda_\Gamma;\mu)}^2\,dt
\\&\leq
h^{2\rho-3\rho\delta-1}\sup_{|t|\leq 2h^\rho}
\int_{-2h^\rho}^{2h^\rho}\|B_{ts}(h)\omega_s u\|_{L^2(\Lambda_\Gamma;\mu)}^2\,ds
\\&\leq
Ch^{2\rho-3\rho\delta-1+\varepsilon_1/2}
\int_{-2h^\rho}^{2h^\rho}\|\omega_s u\|_{L^2(\Lambda_\Gamma;\mu)}^2\,ds
\\&=
4Ch^{2\rho-2\rho\delta-1+\varepsilon_1/2}\|\sqrt{F_{h^\rho}}\,u\|_{L^2(\mathbb R)}^2
\end{aligned}
$$
which gives~\eqref{e:fup-int-1}.
\end{proof}

\subsection{Proof of Theorem~\ref{t:gap}}

We use~\cite[Theorem~3]{hgap}. It suffices to show that $\Lambda_\Gamma$
satisfies the fractal uncertainty principle with exponent $\beta={1\over 2}-\delta+{\varepsilon_1\over 4}$
in the sense of~\cite[Definition~1.1]{hgap}.

The paper~\cite{hgap} uses the Poincar\'e disk model of the hyperbolic plane and the limit
set there is a subset of the circle $\mathbb S^1\subset\mathbb C$. To relate to our model,
we use the standard transformation from the upper half-plane model to the disk model,
\begin{equation}
  \label{e:transform}
z\mapsto w={z-i\over z+i}.
\end{equation}
Note that, with $|\bullet|$ denoting the Euclidean norm on $\mathbb C$, we have
for $x,y\in\mathbb R$
$$
|w(x)-w(y)|^2={4(x-y)^2\over (1+x^2)(1+y^2)}.
$$
Let $\chi\in C^\infty(\mathbb S^1\times\mathbb S^1)$ satisfy
$\supp\chi\cap \{w=w'\}=\emptyset$, and $\mathcal B_\chi(h)$ be the operator
defined in~\cite[(1.6)]{hgap}. For the purpose of satisfying~\cite[Definition~1.1]{hgap}
we may assume that $\chi$ is supported near $\Lambda_\Gamma^2$, in particular
the pullback of $\chi$ to $\mathbb R^2$ by the square of the map~\eqref{e:transform}
is supported in a compact subset of
$\{(x,y)\in \mathbb R^2\mid x\neq y\}$.
Then the operator $\mathcal B_\chi(h)$
has the form~\eqref{e:B-h-def-2} with
$$
U\Subset\{(x,y)\in\mathbb R^2\mid x\neq y\},\quad
\Phi(x,y)=2\log|x-y|-\log(1+x^2)-\log(1+y^2),
$$
and we have on $U$,
$$
\partial^2_{xy}\Phi(x,y)={2\over (x-y)^2}\neq 0.
$$
It remains to apply Proposition~\ref{l:fup} to see that the fractal uncertainty principle~\cite[Definition~1.1]{hgap}
holds, finishing the proof.

\medskip\noindent\textbf{Acknowledgements.}
JB is partially supported by NSF grant DMS-1301619.
This research was conducted during the period SD served as
a Clay Research Fellow.


\end{document}